\definecolor{darkgreen}{rgb}{0,0.45,0}
\def\@cite#1#2{[{#1\if@tempswa ,~#2\fi}]}
\DeclareMathAlphabet{\mathbf}{OT1}{cmr}{b}{n}
\def\matrixobject@{%
  \edef \next@{={\DirectionfromtheDirection@ }}%
  \expandafter \toks@ \next@ \plainxy@
  \let\xy@@ix@=\xyq@@toksix@
  \xyFN@ \OBJECT@}
\let\xy@entry@@norm=\entry@@norm
\def\entry@@norm@patched{%
  \let\object@=\matrixobject@
  \xy@entry@@norm }
\newcommand{\twocong}[2][0.5]{\ar@{}[#2] \save ?(#1)*{\cong}\restore}
\newcommand{\twosim}[2][0.5]{\ar@{}[#2] \save ?(#1)*{\simeq}\restore}
\newcommand{\twoeq}[2][0.5]{\ar@{}[#2] \save ?(#1)*{=}\restore}
\newcommand{\rtwocell}[3][0.5]{\ar@{}[#2] \ar@{=>}?(#1)+/l 0.2cm/;?(#1)+/r 0.2cm/^{#3}}
\newcommand{\ltwocell}[3][0.5]{\ar@{}[#2] \ar@{=>}?(#1)+/r 0.2cm/;?(#1)+/l 0.2cm/^{#3}}
\newcommand{\ltwocello}[3][0.5]{\ar@{}[#2] \ar@{=>}?(#1)+/r 0.2cm/;?(#1)+/l 0.2cm/_{#3}}
\newcommand{\dtwocell}[3][0.5]{\ar@{}[#2] \ar@{=>}?(#1)+/u  0.2cm/;?(#1)+/d 0.2cm/^{#3}}
\newcommand{\dltwocell}[3][0.5]{\ar@{}[#2] \ar@{=>}?(#1)+/ur  0.2cm/;?(#1)+/dl 0.2cm/^{#3}}
\newcommand{\drtwocell}[3][0.5]{\ar@{}[#2] \ar@{=>}?(#1)+/ul  0.2cm/;?(#1)+/dr 0.2cm/^{#3}}
\newcommand{\dthreecell}[3][0.5]{\ar@{}[#2] \ar@3{->}?(#1)+/u  0.2cm/;?(#1)+/d 0.2cm/^{#3}}
\newcommand{\utwocell}[3][0.5]{\ar@{}[#2] \ar@{=>}?(#1)+/d 0.2cm/;?(#1)+/u 0.2cm/_{#3}}
\newcommand{\dtwocelltarg}[3][0.5]{\ar@{}#2 \ar@{=>}?(#1)+/u  0.2cm/;?(#1)+/d 0.2cm/^{#3}}
\newcommand{\utwocelltarg}[3][0.5]{\ar@{}#2 \ar@{=>}?(#1)+/d  0.2cm/;?(#1)+/u 0.2cm/_{#3}}
\newcommand{\ulthreecell}[3][0.5]{\ar@{}[#2] \ar@3?(#1)+/dr  0.2cm/;?(#1)+/ul 0.2cm/_{#3}}
\newcommand{\pullbackcorner}[1][dr]{\save*!/#1-1.2pc/#1:(-1,1)@^{|-}\restore}
\DeclareMathOperator{\el}{el}
\DeclareMathOperator{\colim}{colim}
\DeclareMathOperator{\im}{im}
\newcommand{\cat}[1]{\mathrm{\mathcal #1}}
\newcommand{\thg}{{\mathord{\text{--}}}}
\newcommand{\abs}[1]{{\left|{#1}\right|}}
\newcommand{\res}[2]{\left.{#1}\right|_{#2}}
\newcommand{\spn}[1]{{\langle{#1}\rangle}}
\newcommand{\defeq}{\mathrel{\mathop:}=}
\newcommand{\cd}[2][]{\vcenter{\hbox{\xymatrix#1{#2}}}}
\renewcommand{\phi}{\varphi}
\newcommand{\A}{{\mathcal A}}
\newcommand{\C}{{\mathcal C}}
\newcommand{\D}{{\mathcal D}}
\newcommand{\E}{{\mathcal E}}
\newcommand{\F}{{\mathcal F}}
\newcommand{\I}{{\mathcal I}}
\newcommand{\M}{{\mathcal M}}
\renewcommand{\P}{{\mathcal P}}
\let\sec=\S
\renewcommand{\S}{{\mathcal S}}
\newcommand{\U}{{\mathcal U}}
\newcommand{\V}{{\mathcal V}}
\newcommand{\W}{{\mathcal W}}
\newcommand{\xtor}[1]{\cdl[@1]{{} \ar[r]|-{\object@{|}}^{#1} & {}}}
\def\hookleftarrowfill@{\arrowfill@\leftarrow\relbar{\relbar\joinrel\rhook}}
\def\twoheadleftarrowfill@{\arrowfill@\twoheadleftarrow\relbar\relbar}
\def\leftbararrowfill@{\arrowdoublefill@{\leftarrow\mkern-5mu}\relbar\mapstochar\relbar\relbar}
\def\Leftbararrowfill@{\arrowdoublefill@{\Leftarrow\mkern-2mu}\Relbar\Mapstochar\Relbar\Relbar}
\def\leftringarrowfill@{\arrowdoublefill@{\leftarrow\mkern-3mu}\relbar{\mkern-3mu\circ\mkern-2mu}\relbar\relbar}
\def\lefttriarrowfill@{\arrowfill@{\mathrel\triangleleft\mkern0.5mu\joinrel\relbar}\relbar\relbar}
\def\Lefttriarrowfill@{\arrowfill@{\mathrel\triangleleft\mkern1mu\joinrel\Relbar}\Relbar\Relbar}
\def\rightarrowtailfill@{\arrowfill@{\Yright\joinrel\relbar}\relbar\rightarrow}
\def\hookrightarrowfill@{\arrowfill@{\lhook\joinrel\relbar}\relbar\rightarrow}
\def\twoheadrightarrowfill@{\arrowfill@\relbar\relbar\twoheadrightarrow}
\def\rightbararrowfill@{\arrowdoublefill@{\relbar\mkern-0.5mu}\relbar\mapstochar\relbar\rightarrow}
\def\Rightbararrowfill@{\arrowdoublefill@{\Relbar\mkern-2mu}\Relbar\Mapstochar\Relbar\Rightarrow}
\def\rightringarrowfill@{\arrowdoublefill@\relbar\relbar{\mkern-2mu\circ\mkern-3mu}\relbar{\mkern-3mu\rightarrow}}
\def\righttriarrowfill@{\arrowfill@\relbar\relbar{\relbar\joinrel\mkern0.5mu\mathrel\triangleright}}
\def\Righttriarrowfill@{\arrowfill@\Relbar\Relbar{\Relbar\joinrel\mkern1mu\mathrel\triangleright}}
\def\leftrightarrowfill@{\arrowfill@\leftarrow\relbar\rightarrow}
\def\mapstofill@{\arrowfill@{\mapstochar\relbar}\relbar\rightarrow}
\renewcommand*\xleftarrow[2][]{\ext@arrow 20{20}0\leftarrowfill@{#1}{#2}}
\providecommand*\xLeftarrow[2][]{\ext@arrow 60{22}0{\Leftarrowfill@}{#1}{#2}}
\providecommand*\xhookleftarrow[2][]{\ext@arrow 10{20}0\hookleftarrowfill@{#1}{#2}}
\providecommand*\xtwoheadleftarrow[2][]{\ext@arrow 60{20}0\twoheadleftarrowfill@{#1}{#2}}
\providecommand*\xleftbararrow[2][]{\ext@arrow 10{22}0\leftbararrowfill@{#1}{#2}}
\providecommand*\xLeftbararrow[2][]{\ext@arrow 50{24}0\Leftbararrowfill@{#1}{#2}}
\providecommand*\xleftringarrow[2][]{\ext@arrow 10{26}0\leftringarrowfill@{#1}{#2}}
\providecommand*\xlefttriarrow[2][]{\ext@arrow 80{24}0\lefttriarrowfill@{#1}{#2}}
\providecommand*\xLefttriarrow[2][]{\ext@arrow 80{24}0\Lefttriarrowfill@{#1}{#2}}
\renewcommand*\xrightarrow[2][]{\ext@arrow 01{20}0\rightarrowfill@{#1}{#2}}
\providecommand*\xRightarrow[2][]{\ext@arrow 04{22}0{\Rightarrowfill@}{#1}{#2}}
\providecommand*\xhookrightarrow[2][]{\ext@arrow 00{20}0\hookrightarrowfill@{#1}{#2}}
\providecommand*\xtwoheadrightarrow[2][]{\ext@arrow 03{20}0\twoheadrightarrowfill@{#1}{#2}}
\providecommand*\xrightbararrow[2][]{\ext@arrow 01{22}0\rightbararrowfill@{#1}{#2}}
\providecommand*\xRightbararrow[2][]{\ext@arrow 04{24}0\Rightbararrowfill@{#1}{#2}}
\providecommand*\xrightringarrow[2][]{\ext@arrow 01{26}0\rightringarrowfill@{#1}{#2}}
\providecommand*\xrighttriarrow[2][]{\ext@arrow 07{24}0\righttriarrowfill@{#1}{#2}}
\providecommand*\xRighttriarrow[2][]{\ext@arrow 07{24}0\Righttriarrowfill@{#1}{#2}}
\providecommand*\xmapsto[2][]{\ext@arrow 01{20}0\mapstofill@{#1}{#2}}
\providecommand*\xleftrightarrow[2][]{\ext@arrow 10{22}0\leftrightarrowfill@{#1}{#2}}
\providecommand*\xLeftrightarrow[2][]{\ext@arrow 10{27}0{\Leftrightarrowfill@}{#1}{#2}}
\providecommand*\xrightarrowtail[2][]{\ext@arrow 0055{\rightarrowtailfill@}{#1}{#2}}
\numberwithin{equation}{section}
\theoremstyle{plain}
\newtheorem{Thm}{Theorem}
\newtheorem*{Thm*}{Theorem}
\newtheorem{Prop}[Thm]{Proposition}
\newtheorem{Cor}[Thm]{Corollary}
\newtheorem{Lemma}[Thm]{Lemma}
\theoremstyle{definition}
\newtheorem{Defn}[Thm]{Definition}
\newtheorem{Ex}[Thm]{Example}
\newtheorem{Rk}[Thm]{Remark}
\newcommand{\twocat}{\mathsf}
\newcommand{\ufa}[2][x]{(\forall_\U #1\,\mathord\in\,#2)\,\!}
\newcommand{\upr}[2][x]{(\Pi_\U #1\,\mathord\in\,#2)\,\!}
\newcommand{\npr}[2][x]{(\Pi #1\,\mathord\in\,#2)\,\!}
\newcommand{\nsig}[2][x]{(\Sigma #1\,\mathord\in\,#2)\,\!}
\newcommand{\uf}{\mathcal{UF}}
\newcommand{\ue}{\mathcal{UE}}
\newcommand{\fc}{\twocat{FC}}
\newcommand{\smc}{\mathop{/_{\!\mathrm{mc}}}}
\newcommand{\cl}[1][\mathbb{T}]{\mathcal{C}l(#1)}
\begin{document}
\title[Ultrafilters and locally connected
  classifying toposes]{Ultrafilters, finite coproducts and locally\\ connected
  classifying toposes}
\author{Richard Garner} 
\address{Centre of Australian Category Theory, Macquarie University, NSW 2109, Australia} 
\email{richard.garner@mq.edu.au}

\subjclass[2000]{Primary: }
\date{\today}

\thanks{The support of Australian Research Council grants
  DP160101519 and FT160100393 is gratefully acknowledged. Thanks also
  to the referees and the editor for helpful and constructive
  suggestions, which have improved the paper significantly.}

\begin{abstract}
  We prove a single category-theoretic result encapsulating the
  notions of ultrafilters, ultrapower, ultraproduct, tensor product
  of ultrafilters, the Rudin--Kiesler partial ordering on
  ultrafilters, and Blass's category of ultrafilters $\uf$. The result
  in its most basic form states that the category
  $\twocat{FC}(\cat{Set}, \cat{Set})$ of finite-coproduct-preserving
  endofunctors of $\cat{Set}$ is equivalent to the presheaf category
  $[\uf, \cat{Set}]$.
  Using this result, and some of its evident generalisations, we
  re-find in a natural manner the important model-theoretic
  \emph{realisation} relation between $n$-types and $n$-tuples of
  model elements; and draw connections with Makkai
  and Lurie's work on conceptual completeness for first-order logic
  via \emph{ultracategories}.
  %


  As a further application of our main result, we use it to describe a
  first-order analogue of J\'onsson and Tarski's \emph{canonical
    extension}. Canonical extension is an algebraic formulation of the
  link between Lindenbaum--Tarski and Kripke semantics for
  intuitionistic and modal logic, and extending it to first-order
  logic has precedent in the \emph{topos of types} construction
  studied by Joyal, Reyes, Makkai, Pitts, Coumans and others. Here, we
  study the closely related, but distinct, construction of the
  \emph{locally connected classifying topos} of a first-order theory.
  The existence of this is known from work of Funk, but the
  description is inexplicit; ours, by contrast, is quite concrete.
\end{abstract}
\maketitle
\leftmargini2em
\section{Introduction}
\emph{Ultrafilters} are important in many areas of mathematics, from
Ramsey theory, to topological dynamics, to universal algebra;
see~\cite{Blass1993Ultrafilters} for an overview. Around the notion of
ultrafilter is a circle of associated concepts: the ultrapower of a
set by an ultrafilter, or more generally, the ultraproduct of a family
of sets~\cite{Frayne1962Reduced}; the tensor product of
ultrafilters~\cite{Katv-etov1968Products} and the more general indexed
sum; and the Rudin--Keisler partial ordering on ultrafilters, first
written down by Blass in~\cite{Blass1970Orderings} and immediately
enhanced to a category of ultrafilters.

Of course, ultrafilters are particularly important in model theory.
One aspect of this is that complete $n$-types of a first-order theory
$\mathbb{T}$ are precisely ultrafilters on the Lindenbaum--Tarski
algebra of $\mathbb{T}$ extended by $n$ new constants; in particular,
each $n$-tuple of elements of a $\mathbb{T}$-model has an associated
complete $n$-type, and this realisation relation between
$n$-tuples and $n$-types is central to many questions in model theory.

Ultrafilters also provide the link between the semantics for modal
logic valued in modal algebras (Boolean algebras with operators), and
the Kripke semantics; indeed, the canonical Kripke model associated to
a modal algebra $B$ takes its frame of possible worlds to comprise
exactly the ultrafilters on $B$. A similar relation holds between the
Lindenbaum--Tarski and Kripke semantics for intuitionistic
propositional logic~\cite[\sec2.6]{Troelstra1988Constructivism}; and
these relations have been expressed algebraically via J\'onsson and
Tarski's notion of \emph{canonical
  extension}~\cite{Jonsson1951Boolean, Gehrke1994Bounded}. Canonical
extension has been generalised from propositional to first-order logic
via the \emph{topos of types} construction studied by Joyal and
Reyes~\cite{Joyal1978Forcing}, Makkai~\cite{Makkai1981The-topos} and
Pitts~\cite{Pitts1983An-application}---and again, ultrafilters play a key role.

Another key use of ultrafilters in model theory is via \L os' theorem
that the models of a first-order theory $\mathbb{T}$ are
closed under ultraproducts. This is useful in its own right, for
example in constructing saturated models, but has also been exploited
more structurally by Makkai~\cite{Makkai1987Stone} and
Lurie~\cite{Lurie2018Ultracategories}: they prove a ``conceptual
completeness'' theorem which can reconstruct a first-order theory
$\mathbb{T}$ (or at least, its completion $\mathbb{T}^{\mathsf{eq}}$
under elimination of imaginaries) from the category of models and
elementary embeddings, together with the ultraproduct structure on
this category.

The objective of this paper is to describe a single category-theoretic
result from which the notions of ultrafilter, ultrapower, tensor
product of ultrafilters, and Blass's category $\uf$ of ultrafilters,
together with their interrelations, all flow naturally---and which,
with only a little more effort, is able to speak towards the
applications of ultrafilters in model theory described above. This
result, which is Theorem~\ref{thm:2} and Corollary~\ref{cor:4} below,
may be stated as follows:

\begin{Thm*}
  \looseness=-1 \leavevmode\!The category $\fc(\cat{Set}, \cat{Set})$
  of finite-coproduct-preserving endofunctors of $\cat{Set}$ is
  equivalent to the category $[\uf, \cat{Set}]$ of functors on Blass'
  category~\cite{Blass1970Orderings} of ultrafilters $\uf$. Under this
  equivalence, the ultrapower functor $(\thg)^\U$ corresponds to the
  representable functor at the ultrafilter~$\U$.
\end{Thm*}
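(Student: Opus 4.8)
The plan is to construct the equivalence by hand, giving functors in both directions and checking that they are mutually quasi-inverse. The key device is that any finite-coproduct-preserving $F\colon\Set\to\Set$ equips each element of each value $F(X)$ with a canonical ultrafilter on $X$, which I will call its \emph{support}; the whole argument is organised around this assignment.

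First I would build the support. For $S\subseteq X$, the partition $X = S + (X\setminus S)$ together with preservation of finite coproducts exhibits $F(S)$ and $F(X\setminus S)$ as complementary summands of $F(X)$; writing $F(S)\subseteq F(X)$ for the resulting subset, I set $\U_\xi = \{\, S\subseteq X : \xi\in F(S)\,\}$ for $\xi\in F(X)$. The crux of the proof is the verification that $\U_\xi$ is always an ultrafilter: properness and upward closure are immediate from $F(\emptyset)=\emptyset$ and functoriality; the defining dichotomy is precisely the fact that $\xi$ lies in exactly one of the two complementary summands $F(S)$, $F(X\setminus S)$; and closure under binary intersection follows by refining to the fourfold partition by $S\cap S'$, $S\setminus S'$, $S'\setminus S$ and $X\setminus(S\cup S')$ and using disjointness of summands. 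Alongside this I would record two naturality facts: for $f\colon X\to Y$ one has $\U_{F(f)(\xi)} = f_*\U_\xi$, by pulling the partition of $Y$ back along $f$; and if $f,g\colon X\to Y$ agree on a member of $\U_\xi$ then $F(f)(\xi)=F(g)(\xi)$, by factoring $\xi$ through $F$ of that member.

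These facts produce the first functor, sending $F\colon\Set\to\Set$ to the presheaf $\Phi_F\colon\uf\to\Set$ with $\Phi_F(X,\U)=F(X)_{\U}$, the elements of support exactly $\U$, and sending a Blass morphism $[f]\colon(X,\U)\to(Y,\V)$ to the restriction of $F(f)$: the first naturality fact guarantees this lands in $F(Y)_{\V}$ since $\V=f_*\U$, and the second guarantees independence of the representative $f$. This is evidently functorial in $F$, since a natural transformation $F\Rightarrow F'$ preserves supports by naturality on subset inclusions. In the reverse direction I would send $\Phi$ to $F_\Phi(X)=\sum_{\U\in\beta X}\Phi(X,\U)$, with $F_\Phi(f)$ reindexing the summands by push-forward of ultrafilters. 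That $F_\Phi$ preserves finite coproducts is the one point needing care: one checks that the summand inclusion $(A,\U)\hookrightarrow(A+B,\W)$ with $A\in\W$ is an isomorphism in $\uf$ — a retraction collapsing $B$ onto a point of $A$ is an inverse — so that the sum over $\beta(A+B)$ splits, up to $\uf$-isomorphism, as the sum over $\beta A$ plus that over $\beta B$.

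It then remains to check the two round-trips. Starting from $\Phi$, the elements in the $\U$-summand of $F_\Phi(X)$ have support exactly $\U$, so $\Phi_{F_\Phi}\cong\Phi$; starting from $F$, the support assignment partitions each $F(X)$ as $\coprod_{\U\in\beta X}F(X)_{\U}$, so $F_{\Phi_F}\cong F$ — both naturally. Finally, for the closing assertion I would compute the support of a class $[g]\in X^I/\U$ in the ultrapower $(\thg)^\U$ (with $\U$ an ultrafilter on $I$): since $[g]$ lies in $T^I/\U$ exactly when $g^{-1}(T)\in\U$, the support is the push-forward $g_*\U$. Hence $\bigl((\thg)^\U\bigr)(Y)_{\V} = \{\, [g] : g_*\U=\V\,\}$, which is by definition the hom-set $\uf((I,\U),(Y,\V))$; so $\Phi_{(\thg)^\U}$ is naturally the representable functor $\uf((I,\U),\thg)$ at $\U$, as claimed. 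I expect the ultrafilter verification in the second paragraph to be the main obstacle; everything after it is bookkeeping.
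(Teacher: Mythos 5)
Your proposal is correct, and at its mathematical core it coincides with the paper's argument: your ``support'' $\U_\xi$ is exactly the \emph{type} $\tau_X(\xi)$ from the paper's proof of B\"orger's theorem (Theorem~\ref{thm:3}), and your two functors $F \mapsto \Phi_F$ and $\Phi \mapsto F_\Phi$ are precisely the formulae~\eqref{eq:17} and~\eqref{eq:13}. What differs is the scaffolding. The paper packages the support construction as the statement that $\beta$ is terminal in $\fc(\Set,\Set)$, identifies $\fc(\Set,\Set)$ with the monocartesian part of the slice $[\Set,\Set]/\beta$, applies the general equivalence $[\A,\Set]/X \simeq [\el X, \Set]$ to land in $[\ue,\Set]$, and then invokes the localisation $\uf \cong \ue[\M^{-1}]$ (Proposition~\ref{prop:2}) to pass to $[\uf,\Set]$; you instead verify everything element-by-element and check the two round-trips directly. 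Your route is more self-contained and makes the fibrewise decomposition $F(X) \cong \sum_{\U} F(X)_\U$ vivid; the paper's route earns its keep later, since the same scaffolding is reused verbatim for the generalisations to extensive domains and locally connected codomains (Theorems~\ref{thm:6} and~\ref{thm:8}), where elementwise arguments are no longer available. Two small points of precision to attend to in a write-up: a morphism of $\uf$ is a class of \emph{partial} continuous maps defined on some $U \in \U$, so the action of $\Phi_F$ on $[f]$ must be defined by first lifting $\xi$ through $F(U) \rightarrowtail F(X)$ (your second naturality fact is exactly what makes this well defined, but your phrase ``the restriction of $F(f)$'' elides the lift); and before computing the support of elements of an ultrapower you should record that $(\thg)^\U$ does preserve finite coproducts (a one-line \L os-type check via the dichotomy $g^{-1}(A) \in \U$ or $g^{-1}(B) \in \U$), since $\Phi$ is only defined on $\fc(\Set,\Set)$ --- the paper sidesteps this by computing $\int y_{(X,\U)}$ in the other direction, where preservation is automatic.
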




This builds on B\"orger's
characterisation~\cite{Borger1980A-Characterization} of the functor
$\beta \colon \cat{Set} \rightarrow \cat{Set}$, which sends a set $X$
to its set of ultrafilters, as the terminal
finite-coproduct-preserving endofunctor of $\cat{Set}$. (Though it
will not play any role here, we should also
mention~\cite{Kennison1971Equational}'s different characterisation of
$\beta$ as the \emph{codensity monad} of the inclusion functor
$\cat{Fin\S et} \hookrightarrow \cat{Set}$;
see~\cite{Leinster2013Codensity} for a modern account.)

One way of seeing our result is that once we know what a finite
coproduct-preserving endofunctor of $\cat{Set}$ is, everything else is
forced. The ultrapower endofunctors of $\cat{Set}$ arise as the
projective indecomposable objects in $\fc(\cat{Set}, \cat{Set})$, and the full
subcategory they span is equivalent to $\uf^\mathrm{op}$. Moreover, as
we will see in Proposition~\ref{prop:3}, the composition monoidal
structure on $\fc(\cat{Set}, \cat{Set})$ restricts to this
subcategory, and in this way recovers the tensor product of
ultrafilters.

One thing this theorem does not capture is the notion of ultraproduct.
For this, we require a generalisation of the theorem dealing with
ultrafilters not on sets, but on objects of a category $\C$ which is
\emph{extensive}~\cite{Carboni1993Introduction}, meaning that it has
well-behaved finite coproducts. In this context, an ultrafilter on
$X \in \C$ can be defined as an ultrafilter on the Boolean algebra of
coproduct summands of $X$, giving rise to a category $\uf_\C$
generalising Blass' $\uf$. We now obtain the following natural
generalisation of our main theorem, to be proved as
Theorem~\ref{thm:6}:

\begin{Thm*}
  Let $\C$ be extensive. The category $\fc(\C, \cat{Set})$ of
  finite-coproduct-preserving functors from $\C$ to $\cat{Set}$ is
  equivalent to the functor category $[\uf_\C, \cat{Set}]$.
\end{Thm*}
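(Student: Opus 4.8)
The plan is to realise $\fc(\C,\Set)$ as a presheaf category by following the template of the base case, Theorem~\ref{thm:2}: locate inside it a small, dense full subcategory of \emph{tiny} objects (the indecomposable projectives), show this subcategory is equivalent to $\uf_\C^\op$, and then invoke the standard recognition principle that a cocomplete category carrying a small dense full subcategory $\A$ of tiny objects is equivalent to $[\A^\op,\Set]$. The very first step is to check that $\fc(\C,\Set)$ is cocomplete. Since $\C$ has finite coproducts (being extensive), and since finite coproducts in $\Set$, being themselves colimits, commute with all small colimits, a pointwise colimit of finite-coproduct-preserving functors again preserves finite coproducts---including the nullary one, as the pointwise colimit of the constant functor $0\mapsto\emptyset$ is $\emptyset$. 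Hence the inclusion $\fc(\C,\Set)\hookrightarrow[\C,\Set]$ creates colimits, and $\fc(\C,\Set)$ is cocomplete with colimits computed pointwise.

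Next I would construct, for each object $(X,\U)$ of $\uf_\C$, the \emph{ultraproduct functor}
\[
  U_{X,\U} \defeq \colim_{S \in \U} \C(S, \thg) \colon \C \longrightarrow \Set,
\]
the filtered colimit of representables indexed by $\U$ ordered by reverse inclusion of summands, the transition maps restricting along the summand inclusions $S'\hookrightarrow S$; this is filtered precisely because $\U$ is closed under finite meets. For $\C=\Set$ this recovers the ultrapower functor $(\thg)^\U$. The first substantive point is that $U_{X,\U}$ preserves finite coproducts even though the individual representables $\C(S,\thg)$ do not: by extensivity a map $S\to A+B$ is the same as a decomposition $S=S_A+S_B$ into summands together with maps $S_A\to A$ and $S_B\to B$, and the primeness of $\U$ selects, cofinally in the colimit, whichever of $S_A,S_B$ lies in $\U$. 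This is exactly the \L os/B\"orger computation that makes $U_{X,\U}$ finite-coproduct-preserving, the strictness of the initial object of $\C$ handling the nullary case.

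I would then promote $(X,\U)\mapsto U_{X,\U}$ to a fully faithful functor $\uf_\C^\op\to\fc(\C,\Set)$---the functoriality supplied by the Rudin--Kiesler-type morphisms that define $\uf_\C$, with a morphism $(X,\U)\to(Y,\V)$ inducing the comparison $U_{Y,\V}\to U_{X,\U}$ by precomposition---and identify its essential image with the tiny objects. Tininess, i.e.\ that $\fc(\C,\Set)(U_{X,\U},\thg)$ preserves all colimits, follows because the defining colimit is filtered, finite coproducts commute with filtered colimits in $\Set$, and the primeness selection is stable under the pointwise colimits. Granting density (below), the recognition principle then yields $\fc(\C,\Set)\simeq[\A^\op,\Set]$ with $\A\simeq\uf_\C^\op$, hence $\fc(\C,\Set)\simeq[\uf_\C,\Set]$, the object $U_{X,\U}$ corresponding to the representable presheaf at $(X,\U)$.

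The main obstacle is the density claim itself: that the ultraproduct functors \emph{generate}, i.e.\ that every $F\in\fc(\C,\Set)$ is canonically the colimit of the $U_{X,\U}$ mapping into it, equivalently that the nerve $F\mapsto\fc(\C,\Set)(U_{\thg},F)$ is fully faithful. This is the conceptual-completeness-flavoured heart of the matter, requiring one to reconstruct an arbitrary finite-coproduct-preserving functor from its family of ultra-evaluations; it is here that extensivity does the real work, guaranteeing that the Boolean algebras of summands of the objects of $\C$, and the ultrafilters upon them, retain enough information to recover $F$. I would expect to reduce this, object by object in $\C$, to the density statement already underpinning the base case of Theorem~\ref{thm:2}, with extensivity ensuring that the summand combinatorics transports faithfully from $\Set$ to $\C$.
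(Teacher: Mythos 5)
Your route is genuinely different from the paper's. The paper deduces the theorem from the terminality of the ultrafilter functor $\beta \colon \C \rightarrow \cat{Set}$ in $\fc(\C,\cat{Set})$ (Proposition~\ref{prop:1}, the generalisation of B\"orger's theorem): every $F$ acquires a unique ``type'' map $\tau \colon F \Rightarrow \beta$, which identifies $\fc(\C,\cat{Set})$ with the sum-cartesian part of the slice $[\C,\cat{Set}]/\beta$; the category-of-elements equivalence and the localisation $\uf_\C \cong \ue_\C[\M_\Sigma^{-1}]$ then finish the job. You instead propose to exhibit the ultrahom functors $U_{X,\U} = \colim_{S\in\U}\C(S,\thg)$ as a small dense full subcategory of tiny objects and invoke the recognition principle for presheaf categories. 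Several of your steps are sound: cocompleteness of $\fc(\C,\cat{Set})$ with pointwise colimits, the fact that $U_{X,\U}$ preserves finite coproducts (your \L os-style computation via extensivity and primeness is exactly right), and the contravariant assignment $(X,\U)\mapsto U_{X,\U}$, whose full faithfulness amounts to the computation in Corollary~\ref{cor:4}.

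There are, however, two genuine gaps, and they are not independent. First, tininess: by Yoneda, $\fc(\C,\cat{Set})(U_{X,\U},F) \cong \lim_{S\in\U}FS$, a \emph{cofiltered limit}, and cofiltered limits do not commute with arbitrary pointwise colimits in $\cat{Set}$; your appeal to ``finite coproducts commute with filtered colimits'' addresses the wrong interchange. What saves the day is that each $FS\to FX$ is injective (Lemma~\ref{lem:2}), so $\lim_{S\in\U}FS \cong \bigcap_{S\in\U}\im(FS\to FX) = \tau_X^{-1}(\U)$ is a coproduct summand of the colimit-preserving functor $F\mapsto FX$, naturally in $F$---but knowing that these summands are compatible with colimits already uses the decomposition $FX\cong\sum_{\U\in\beta X}\tau_X^{-1}(\U)$. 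Second, and more seriously, the density claim---which you yourself flag as the heart of the matter---is left at ``I would expect to reduce this\ldots''. Density is precisely the assertion that every $x\in FX$ has a well-defined type which is an \emph{ultrafilter} (not merely a filter) on $\mathrm{Sum}_\C(X)$, so that $FX$ is the disjoint union of the fibres $\tau_X^{-1}(\U)$ and the canonical cocone from the $U_{X,\U}$ is colimiting. That is exactly the content of Proposition~\ref{prop:1}, which your proposal neither invokes nor reproves; without it---or an equivalent argument that condition (ii)$'$ of Definition~\ref{def:1} holds for the type of an element, using that $F$ sends the coproduct $S + \neg S = X$ to a coproduct---the recognition principle cannot be applied. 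Supplying that argument would make your route work, and would in effect reconstruct the paper's key lemma inside your framework.
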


As we will see in Section~\ref{sec:relat-ultraprod}, we may recapture
ultraproducts from this theorem by taking $\C = \cat{Set}^X$, yielding
an equivalence
$[\uf_{\cat{Set}^X}, \cat{Set}] \simeq \fc(\cat{Set}^X, \cat{Set})$;
now the ultraproduct functors
$\Pi_\U \colon \cat{Set}^X \rightarrow \cat{Set}$ correspond under
this equivalence to suitable representable functors in
$[\uf_{\cat{Set}^X}, \cat{Set}]$.

A second application, described in
Section~\ref{sec:relat-model-theory}, takes $\C$ to be the
\emph{classifying Boolean pretopos} of a theory $\mathbb{T}$ of
classical first-order logic, which plays the same role for a
first-order theory as does the Lindenbaum--Tarski algebra of a
propositional theory. In this case, ultrafilters on $A \in \C$
correspond to model-theoretic types in context $A$, and our result
allows us to reconstruct a categorical treatment of
these~\cite{Makkai1981The-topos}. Indeed, by the classifying property
of $\C$, models of $\mathbb{T}$ correspond to pretopos morphisms
$\C \rightarrow \cat{Set}$. As pretopos morphisms preserve finite
coproducts, the theorem thereby associates to each $\mathbb{T}$-model
$M$ a functor $\uf_\C \rightarrow \cat{Set}$---whose values pick out
the sets of $M$-elements that realise each 
type.

Our main theorem can be generalised further by varying the codomain
category as well as the domain category. Recall that a
\emph{Grothendieck topos} is the category of sheaves on a small site,
and that a Grothendieck topos $\E$ is \emph{locally connected} when
the left adjoint $\Delta \colon \cat{Set} \rightarrow \E$ of its
global sections functor
$\Gamma = \E(1, \thg) \colon \E \rightarrow \cat{Set}$ has a further
left adjoint $\pi_0 \colon \E \rightarrow \cat{Set}$. The second
generalisation of our main theorem, to be proved as
Theorem~\ref{thm:8} below, is now:

\begin{Thm*}
  Let $\C$ be extensive and $\E$ a locally connected Grothendieck
  topos. The category $\fc(\C, \E)$ of finite-coproduct-preserving
  functors from $\C$ to $\E$ is equivalent to the functor category
  $[\uf_\C, \E]$.
\end{Thm*}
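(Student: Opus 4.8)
The plan is to deduce this from the already-established $\cat{Set}$-valued equivalence $\fc(\C, \cat{Set}) \simeq [\uf_\C, \cat{Set}]$ of Theorem~\ref{thm:6}, by lifting it along the structure that exhibits $\E$ as a locally connected Grothendieck topos. I would proceed in two stages: first treat the case where $\E = [\cat{D}^\op, \cat{Set}]$ is a presheaf topos, and then bootstrap to a general locally connected $\E$ by presenting it as a suitable subcategory of such a presheaf topos.

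For the presheaf case, everything is pointwise. Since finite coproducts in $[\cat{D}^\op, \cat{Set}]$ are computed objectwise, a functor $\C \to [\cat{D}^\op, \cat{Set}]$ preserves finite coproducts exactly when each of its components $\C \to \cat{Set}$ does; thus $\fc(\C, [\cat{D}^\op, \cat{Set}]) \simeq [\cat{D}^\op, \fc(\C, \cat{Set})]$. Post-composing with the equivalence of Theorem~\ref{thm:6} and then exchanging the order of the two functor-category constructions gives $[\cat{D}^\op, \fc(\C,\cat{Set})] \simeq [\cat{D}^\op, [\uf_\C, \cat{Set}]] \simeq [\uf_\C, [\cat{D}^\op, \cat{Set}]]$, which is the desired $[\uf_\C, \E]$. (Consistently, every presheaf topos is already locally connected.)

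For the general case, write $\E$ as sheaves on a site and let $i \colon \E \hookrightarrow \widehat{\cat D} = [\cat{D}^\op, \cat{Set}]$ be the inclusion, with sheafification $a \dashv i$. Both categories in question are full subcategories of their presheaf-valued analogues: $[\uf_\C, \E]$ consists of the $F \colon \uf_\C \to \widehat{\cat D}$ taking values in sheaves, and---using that a locally connected topos has its sheaves closed under coproducts, so that $i$ preserves finite coproducts---$\fc(\C, \E)$ likewise consists of the finite-coproduct-preserving $H \colon \C \to \widehat{\cat D}$ taking values in sheaves. It therefore suffices to check that, under the presheaf equivalence of the first stage, these two ``pointwise-sheaf'' conditions correspond. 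Writing the equivalence as $e(G)(\U) = \fc(\C,\cat{Set})(P_\U, G)$ on the representable projective indecomposables $P_\U$, and as a density coend $e^{-1}(F) = \int^{\U} F(\U)\cdot P_\U$ in reverse, one direction is immediate: the sheaf condition on a locally connected site is a \emph{connected} limit condition, preserved by the representable functors $\fc(\C,\cat{Set})(P_\U, \thg)$, so an $H$ valued in sheaves forces each $e(H)(\U)$ to be a sheaf.

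The crux---and the one place local connectedness is genuinely used---is the converse: that the reconstruction coend $\int^{\U} F(\U)\cdot P_\U$, formed in $\widehat{\cat D}$ from a pointwise-sheaf $F$, is again pointwise a sheaf. This is precisely the statement that the sheaves lie inside the presheaves closed under the colimits appearing in this coend, which is a defining feature of local connectedness: on a locally connected site the covering sieves are connected, so the sheaf condition demands preservation of connected limits, and these commute in $\cat{Set}$ with the coproducts (copowers) and remaining colimits that assemble the coend. I expect verifying this colimit--limit interchange at the generality required---not merely for coproducts, but for the whole coend over $\uf_\C$---to be the main obstacle and the heart of the argument. Conceptually, the entire content beyond Theorem~\ref{thm:6} is that $\fc(\C, \thg)$ commutes with the operation $\thg \otimes \E$ of tensoring a cocomplete category with $\E$: the functor-category side $[\uf_\C, \thg]$ always does so, whereas $\fc(\C, \thg)$ does so exactly because $\E$ is locally connected.
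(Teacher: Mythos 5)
Your proposal is correct in outline, but it takes a genuinely different route from the paper's. The paper never chooses a site: local connectedness enters only through the extra left adjoint $\pi_0 \dashv \Delta$, which makes $\Delta \circ (\thg) \colon \fc(\C,\cat{Set}) \rightarrow \fc(\C,\E)$ a right adjoint and hence makes $\Delta\beta$ terminal in $\fc(\C,\E)$; the Set-valued argument is then repeated with the slice equivalence~\eqref{eq:10} replaced by its $\E$-valued analogue~\eqref{eq:25}, available because $\E$ is infinitary extensive. Your route instead applies Theorem~\ref{thm:6} pointwise over a presheaf topos and restricts along a sheaf inclusion. This works, and it locates the use of local connectedness somewhere equally illuminating (closure of sheaves under pointwise coproducts), but it leans on an ingredient you leave implicit: you must present $\E$ by a \emph{locally connected site}, i.e.\ one all of whose covering sieves are connected, since for an arbitrary site of definition the inclusion $i$ does not preserve coproducts and the sheaf condition is not a connected limit, so both of your ``pointwise-sheaf'' identifications fail. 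That every locally connected Grothendieck topos admits such a site is standard (Elephant, C3.3) but is a genuine extra input, comparable in weight to the paper's use of $\pi_0$. Two smaller remarks: your worry about the ``whole coend'' is unnecessary, because the inverse of the pointwise equivalence is literally the coproduct formula~\eqref{eq:13}, $(\int\!F)(Y) = \sum_{\V \in \beta Y} F(Y,\V)$, so the only interchange needed is that connected limits commute with coproducts in $\cat{Set}$; and the forward direction is cleanest not via representables preserving limits but by noting that~\eqref{eq:17} exhibits $\tilde H(X,\U)$ as an intersection of subsheaves of the sheaf $HX$, hence a sheaf. With those points made explicit your argument is complete; what the paper's approach buys in exchange is site-independence and a direct derivation of the formulae~\eqref{eq:17} and~\eqref{eq:13} for both directions of the equivalence in $\E$ itself.
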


One application of this theorem, described in
Section~\ref{sec:ultr-ultr-relat}, allows us to reconstruct the
indexed sum of ultrafilters. For any sets $X$ and $Y$, our theorem
yields an equivalence
$\fc(\cat{Set}^X, \cat{Set}^Y) \simeq [\uf_{\cat{Set}^X},
\cat{Set}^{Y}]$, and we define a \emph{generalised ultraproduct}
functor $\cat{Set}^X \rightarrow \cat{Set}^Y$ to be one that
corresponds under this equivalence to a pointwise representable
functor $\uf_{\cat{Set}^X} \rightarrow \cat{Set}^Y$. Such functors
have a representation as \emph{ultraspans}: that is, as diagrams
\begin{equation}\label{eq:28}
  \cd[@-0.5em]{ & M \ar[dl]_-{f} \ar@{~>}[dr]^-{(g, \U)} \\
      X & & Y
    }
\end{equation}
with left leg a function $f$ and right leg a function $g$ endowed with
an ultrafilter $\U_{y}$ on each fibre $g^{-1}y$. Moreover, it turns
out that generalised ultraproduct functors are closed under
composition, so inducing a composition law on ultraspans~\eqref{eq:28}
which encodes perfectly the indexed sum of ultrafilters.

Another potential application of the above theorem, sketched in
Remark~\ref{rk:9}, is to Makkai's
\emph{ultracategories}~\cite{Makkai1987Stone}. An ultracategory is a
category $\C$ endowed with abstract ultraproduct functors
$\Pi_\U \colon \C^X \rightarrow \C$ together with interpretations for
any ``definable map between ultraproducts''---the
\emph{ultramorphisms} of~\cite{Makkai1987Stone}. The key example of an
ultracategory is the category of models of a coherent theory
$\mathbb{T}$ in intuitionistic first-order logic,
and~\cite{Makkai1987Stone}'s main result shows that, to within Morita
equivalence, $\mathbb{T}$ can be reconstructed from its ultracategory
of models.

We expect to relate ultracategories to our main result via the
machinery of \emph{enriched categories}~\cite{Kelly1982Basic,
  Walters1981Sheaves}. We have calculated far enough to convince
ourselves that categories endowed with abstract ultraproduct functors
can be identified with certain categories enriched over the
\emph{bicategory} $\fc_{\cat{Set}}$ of finite-coproduct-preserving
functors between powers of $\cat{Set}$ which admit certain copowers (a
kind of enriched colimit). The key point is that, in proving this, we
exploit the equivalences
$\fc(\cat{Set}^X, \cat{Set}^Y) \simeq [\uf_{\cat{Set}^X},
\cat{Set}^{Y}]$ established above. We will develop this line of
thought further in future work.

Our final main result exploits the preceding theorems to construct the
\emph{locally connected classifying topos} of a suitable pretopos
$\C$. This construction is similar to the \emph{toposes of types}
mentioned above~\cite{Joyal1978Forcing,Makkai1981The-topos,
  Pitts1983An-application}, in that it provides a first-order analogue
to the operation of canonical
  extension~\cite{Jonsson1951Boolean, Gehrke1994Bounded} on
propositional theories.
%
%
%
The existence of locally
connected classifying toposes follows from~\cite{Funk1999The-locally};
however, the existence proof given there is somewhat inexplicit. We
will improve on this by showing that any small pretopos satisfying the
\emph{De Morgan} property (recalled in Definition~\ref{def:25} below)
has a locally connected classifying topos given by the topos of
sheaves on $\uf_\C$ for a certain Grothendieck topology, related to
one found in~\cite{Joyal1978Forcing}. Our final main result, proved as
Theorem~\ref{thm:12} below, is thus:

\begin{Thm*}
  Let $\C$ be a small De Morgan pretopos. The topos
  $\cat{Sh}({\uf_\C})$  is a locally connected classifying topos for $\C$, and is
  itself De Morgan.
\end{Thm*}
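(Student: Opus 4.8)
The plan is to establish three things: that $\cat{Sh}(\uf_\C)$ is a Grothendieck topos (automatic, once the topology is defined), that it is locally connected, and that it has the correct classifying property; the final De Morgan claim should fall out at the end.

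First I would pin down the Grothendieck topology $J$ on $\uf_\C$ so that $\cat{Sh}(\uf_\C, J)$ is locally connected. The key structural fact I would lean on is the earlier equivalence $\fc(\C,\E)\simeq[\uf_\C,\E]$ from Theorem~\ref{thm:8}, valid for any locally connected $\E$. For a presheaf topos the three adjoints $\pi_0\dashv\Delta\dashv\Gamma$ exist trivially, and a sheaf subtopos of $[\uf_\C,\cat{Set}]$ is locally connected precisely when the associated sheafification preserves the terminal object and the inclusion is cartesian in the appropriate sense---concretely, when $J$ is a \emph{locally connected} (equivalently, \emph{comonadic} or \emph{local}) topology, one generated by covering families whose index sets are connected. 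So I would define $J$ by declaring a family to cover an object of $\uf_\C$ exactly when it is jointly surjective \emph{and} connected in the sense forced by the extensive/De~Morgan structure, and then verify that the full subcategory of $J$-sheaves is closed under the $\pi_0$ computation. This is where the De Morgan hypothesis does its work: De Morgan-ness of $\C$ is exactly what guarantees that the relevant covering sieves behave like connected colimits, so that $\pi_0$ descends to sheaves.

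Next I would verify the classifying property. By definition $\cat{Sh}(\uf_\C)$ classifies $\C$ iff, for every Grothendieck topos $\F$, geometric morphisms $\F\to\cat{Sh}(\uf_\C)$ correspond naturally to the appropriate $\C$-models in $\F$---here, to \emph{finite-coproduct-preserving} (equivalently, by the intended universal property of the \emph{locally connected} classifying topos, the locally connected-geometric) functors $\C\to\F$. The chain I would run is: a geometric morphism $\F\to\cat{Sh}(\uf_\C)$ with locally connected $\F$ corresponds to a $J$-continuous flat functor $\uf_\C\to\F$, hence by the sheaf condition to a functor $\uf_\C\to\F$ that is a sheaf-theoretic model, hence by Theorem~\ref{thm:8} (applied with $\E=\F$) to a finite-coproduct-preserving functor $\C\to\F$. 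The content is checking that the topology $J$ is exactly the one that cuts $[\uf_\C,\F]$ down to the $\fc(\C,\F)$ image under the equivalence, uniformly in $\F$; this is the step where I expect the main obstacle to lie, since one must show the correspondence of Theorem~\ref{thm:8} is compatible with the $J$-sheaf condition for \emph{all} locally connected $\F$ at once, not merely $\cat{Set}$, and that the resulting $2$-natural family assembles into a genuine geometric morphism rather than merely a functor.

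Finally, the De Morgan claim for $\cat{Sh}(\uf_\C)$ itself I would obtain by transporting the De Morgan property of $\C$ across the equivalence: a Grothendieck topos is De Morgan iff its subobject classifier's double-negation-style splitting holds, equivalently iff it satisfies the internal De Morgan law $\neg(p\wedge q)=\neg p\vee\neg q$ in the subobject lattice. Since the topology $J$ is built from the coproduct-summand Boolean algebras that encode $\uf_\C$, and since De~Morgan-ness of $\C$ is precisely a condition on those Boolean algebras, I would check that $J$-sheaves inherit the law objectwise and that sheafification preserves it. I would handle the three pieces---topology, local connectedness, classifying property---in that order, treating the uniform-in-$\F$ compatibility as the crux and relegating the Boolean-lattice bookkeeping for the De~Morgan conclusion to a short concluding verification.
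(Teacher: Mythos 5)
Your overall architecture---fix a topology on $\uf_\C$, prove local connectedness at the level of the site, and get the classifying property by composing the sheaf-classification equivalence with Theorem~\ref{thm:8}---matches the paper's, but two essential pieces are missing or wrong. First, you have misidentified what is being classified, and hence what the topology is for. By Definition~\ref{def:24} the locally connected classifying topos of a pretopos $\C$ represents \emph{pretopos morphisms} $\C \to \E$, i.e.\ functors preserving finite limits, finite coproducts \emph{and regular epimorphisms}---not merely finite-coproduct-preserving functors. Theorem~\ref{thm:8} already gives $[\uf_\C,\E]\simeq\fc(\C,\E)$ for every locally connected $\E$ with no topology in sight, so the topology cannot be ``the one that cuts $[\uf_\C,\E]$ down to the $\fc(\C,\E)$ image''. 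Rather, flatness of a functor $\uf_\C\to\E$ accounts for finite-limit-preservation of the corresponding functor out of $\C$ (the content of Proposition~\ref{prop:19}), and the covers must be chosen so that cover-preservation corresponds precisely to regular-epi-preservation. The paper's covers of $(Y,\V)$ are the concrete families $\spn{f,\V}$ of Proposition~\ref{prop:23}, indexed by \emph{all} ultrafilters $\U$ with $f_!(\U)=\V$ for a fixed $f$ with $\im f\in\V$; your ``jointly surjective and connected in the sense forced by the extensive/De Morgan structure'' is not a definition and gives you nothing to compute with.

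Second, the mechanism by which the De Morgan hypothesis enters is absent. The paper proves (Proposition~\ref{prop:21}) that De Morgan-ness of $\C$ forces $\uf_\C$ to satisfy the right Ore condition---the proof runs through the $\neg\neg$-operation on subobjects, the Frobenius and Beck--Chevalley identities, and the Boolean prime ideal theorem---and separately shows that every cover $\spn{f,\V}$ is non-empty (again by the prime ideal theorem plus a Frobenius computation). Local connectedness \emph{and} De Morgan-ness of $\cat{Sh}(\uf_\C)$ then both follow from standard site-level criteria (Johnstone, Examples C3.3.11(a); Caramello, Corollary 2.8) applied to a right-Ore site with non-empty covers. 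Your plan to ``check that $J$-sheaves inherit the De Morgan law objectwise and that sheafification preserves it'' is not a workable substitute: the De Morgan law is a statement about subobject lattices in the sheaf topos, not about the values of individual sheaves, and it is not in general inherited by sheaf subtoposes, so some site-level argument of the above kind is unavoidable. Without the right Ore condition and the non-emptiness of covers, neither the local connectedness claim nor the final De Morgan claim gets off the ground.
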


While we discuss briefly the similarities and differences between this
construction, and the various toposes of types in the literature, we
will, once again, leave a more detailed comparison to future work.

\section{Background}

\subsection{Ultrafilters, ultraproducts and ultrapowers}
\label{sec:ultrafilters}
In this section, we recall the notions that our main theorem is
designed to capture and their interrelations with each other. Before
starting on this, we first establish some notational conventions for
indexed families which will be used throughout the paper.

\begin{Defn}
  \label{def:12}
  Let $Y = (Y(x) \mid x \in X)$ be an $X$-indexed family of sets. We write
  $\nsig X Y(x)$ or more briefly $X.Y$ for the indexed sum of this
  family, that is, the set of pairs $\{\,(x,y) : x \in X, y \in Y(x)\,\}$. We write
  $\pi_Y \colon X.Y \rightarrow X$ for the first projection map,
  and call this map \emph{the display family associated to $Y$}.
  We also write $\npr X Y(x)$ for the indexed product of the $Y(x)$'s:
  that is, the set of functions $f \colon X \rightarrow X.Y$ which are
  sections of $\pi_Y \colon X.Y \rightarrow X$.
\end{Defn}

More generally, a \emph{display family over $X$} is any function
$\pi \colon E \rightarrow X$, and the \emph{$X$-indexed family
  associated to $\pi$} is the family of fibres
$(\pi^{-1}(x) \mid x \in X)$. As is well known, the passage between
$X$-indexed families and display families over $X$ underlies an
equivalence of categories
\begin{equation}\label{eq:26}
  \cat{Set}^X \simeq \cat{Set}/X\rlap{ .}
\end{equation}
This equivalence and its generalisations will play an important role
in this paper.

\begin{Defn}
  \label{def:1}
  An \emph{ultrafilter} on a set $X$ is a Boolean algebra homomorphism
  $u \colon \P X \rightarrow 2$. Most often, we describe $u$ by
  specifying the subset $\U = u^{-1}(\top)$ of $\P X$; so an
  ultrafilter is equally a collection $\U$ of subsets of $X$
  such that:
  \begin{enumerate}[(i)]
  \item $X \in \U$, and $U \cap V \in \U \Longleftrightarrow (U \in \U \text{ and } V
    \in \U)$; 
  \item[(ii)] $\bot \notin \U$, and
    $U \cup V \in \U \Longleftrightarrow (U \in \U \text{ or } V \in
    \U)$.
  \end{enumerate}
  Equivalently, we may replace condition (ii) with:
  \begin{enumerate}
  \item[(ii)$'$]
    $U \in \U \Longleftrightarrow X \setminus U \notin \U$.
  \end{enumerate}
  We write $\beta X$ for the set of ultrafilters on the set $X$.
\end{Defn}

The \emph{principal ultrafilter} at $x \in X$ is
$\mathord \uparrow x = \{ U \subseteq X : x \in U\}$. These are the
only ultrafilters we can write down explicitly; indeed, the existence
of non-principal ultrafilters is a choice principle, slightly weaker
than the axiom of choice~\cite{Halpern1971The-Boolean}.

It is often useful to view ultrafilters as generalised quantifiers.
Given a predicate $\varphi(x)$ depending on $x \in X$ and an
ultrafilter $\U$ on $X$, we write $\ufa X \varphi(x)$ to indicate that
$\{x \in X : \varphi(x)\} \in \U$ and say that ``for $\U$-almost all
$x$, $\varphi(x)$ holds''.

\begin{Defn}
  \label{def:2}
  Let $\U \in \beta X$. If $Y$ is a set, then the \emph{ultrapower}
  $Y^\U$ is the set of $=_\U$-equivalence classes of partial functions $X \rightharpoonup Y$
  defined on a set in $\U$, where
  \begin{equation}\label{eq:3}
    f =_\U g \qquad \text{iff} \qquad 
    \ufa X f(x) \equiv g(x)\rlap{ .}
  \end{equation}
  Here we write $f(x) \equiv g(x)$ to mean ``$f$ and $g$ are 
  defined at $x$ and are equal''.
  
  More generally, if $Y$ is an $X$-indexed family of sets, then the
  \emph{ultraproduct} $\upr X Y(x)$ is the set of $=_\U$-equivalence
  classes of partial sections, defined on a set in $\U$, of
  $\pi_Y \colon X.Y \rightarrow X$. Note that $Y^\U = \upr X Y$.
\end{Defn}

We can take a topological view on ultraproducts. Given an $X$-indexed
family $Y$, we can view the projection $\nsig X Y(x) \rightarrow X$ as
a local homeomorphism between discrete spaces. Pushing this forward
along the embedding $X \rightarrow \beta X$ of $X$ into its
Stone--C\v ech compactification yields a local homeomorphism over
$\beta X$, whose fibre over an ultrafilter $\U \in \beta X$ is the ultraproduct
$\upr X Y(x)$.

The explicit formula for pushforward of local homeomorphisms---in
terms of germs of local sections---yields the following reformulation of
ultrapowers and ultraproducts in terms of colimits; herein, we view $\U$ as a poset ordered by inclusion:
\begin{equation}
  \label{eq:2}
  \begin{aligned}
    \ Y^\U &= \colim_{U \in \U} Y^U\\
    \upr X Y(x) &= \colim_{U \in \U} \npr U Y(x)\rlap{ .}
  \end{aligned}
\end{equation}  
This description makes it clear that ultraproduct and ultrapower are
functors $(\thg)^\U \colon \cat{Set} \rightarrow \cat{Set}$ and
$\Pi_\U \colon \cat{Set}^X \rightarrow \cat{Set}$ respectively. 

\subsection{The category of ultrafilters}
\label{sec:categ-ultr}
Given ultrafilters $\U$ on $X$ and $\V$ on $Y$, we say that
$f \colon X \rightarrow Y$ is \emph{continuous} if $V \in \V$ implies
$f^{-1}(V) \in \U$. By axiom (ii)$'$ and the fact that $f^{-1}$
preserves complements, this is equally the condition that
\begin{equation}
  \label{eq:1}
  V \in \V \Longleftrightarrow f^{-1}(V) \in \U\rlap{ .}
\end{equation}

The continuous maps play an important role in two natural categories
of ultrafilters, originally defined in~\cite{Katv-etov1968Products,
  Koubek1970On-the-category} in the more general context of
\emph{filters}.

\begin{Defn}
  \label{def:3}
  The category $\ue$ of ultrafilters has pairs $(X \in
  \cat{Set}, \U \in \beta X)$ as objects, and as maps
  $(X, \U) \rightarrow (Y, \V)$ the continuous maps
  $X \rightarrow Y$. The category $\uf$ of ultrafilters has the same
  objects, and as morphisms $(X,\U) \rightarrow (Y, \V)$ the
  $=_\U$-equivalence classes of partial continuous maps
  $X \rightharpoonup Y$ defined on a set in $\U$.
\end{Defn}

Our naming reflects that $\uf$ is the ``good'' category of
ultrafilters and $\ue$ just a preliminary step to get there; for
indeed, $\uf$ arises by inverting the class $\M$ of continuous
injections in $\ue$. The proof of this fact given below mirrors that given
in~\cite[Theorem~16]{Blass1977Two-closed} for the category of filters;
in its statement, $\iota \colon \ue \rightarrow \uf$ is the
identity-on-objects functor taking $f$ to its $=_\U$-equivalence
class.

\begin{Prop}
  \label{prop:2}
  $\iota \colon \ue \rightarrow \uf$ exhibits $\uf$ as $\ue[\M^{-1}]$.
\end{Prop}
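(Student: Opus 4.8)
The plan is to verify the universal property of the localisation $\ue[\M^{-1}]$ directly: I must show that $\iota \colon \ue \rightarrow \uf$ sends every continuous injection in $\M$ to an isomorphism, and that $\iota$ is universal with this property. First I would check that $\iota$ inverts $\M$. If $m \colon (X,\U) \rightarrow (Y,\V)$ is a continuous injection, then since $m$ is monic its image $m(X) \subseteq Y$ lies in $\V$ (because $m^{-1}(m(X)) = X \in \U$, so by continuity in the form~\eqref{eq:1}, $m(X) \in \V$). Hence $m$ has a partial inverse $m^{-1} \colon Y \rightharpoonup X$ defined on the set $m(X) \in \V$, and I would confirm that this partial map is continuous and represents a two-sided inverse to $\iota m$ in $\uf$, the composites agreeing with the identities up to $=$-equivalence on a set in the relevant ultrafilter.

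The substance of the argument is the universal property. Given any functor $F \colon \ue \rightarrow \D$ inverting every map in $\M$, I need a unique $\bar F \colon \uf \rightarrow \D$ with $\bar F \iota = F$. Since $\iota$ is the identity on objects, $\bar F$ must agree with $F$ on objects, so the content is defining $\bar F$ on morphisms and checking well-definedness and functoriality. The strategy I would use is to present each morphism of $\uf$ as a \emph{span} in $\ue$ whose left leg lies in $\M$: a partial continuous map $X \rightharpoonup Y$ defined on $U \in \U$ is exactly a continuous map $U \rightarrow Y$ out of the restricted ultrafilter, together with the inclusion $U \hookrightarrow X$, which is a continuous injection in $\M$. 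Thus a $\uf$-morphism $(X,\U) \rightarrow (Y,\V)$ is represented by a span
\begin{equation*}
  (X,\U) \xleftarrow{\ m\ } (U, \U|_U) \xrightarrow{\ f\ } (Y,\V)
\end{equation*}
with $m \in \M$, and I would set $\bar F[f] = Ff \cdot (Fm)^{-1}$. This forces uniqueness, since any $\bar F$ with $\bar F\iota = F$ inverting $m$ must satisfy this formula.

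The main obstacle—and the place requiring real care—is showing $\bar F$ is well-defined and functorial, i.e.\ that this span-calculus is legitimate. Two partial continuous maps represent the same $\uf$-morphism iff they agree on a common smaller set in $\U$; I must check the assigned value $Ff \cdot (Fm)^{-1}$ is unchanged under shrinking the domain of definition, which reduces to the compatibility $F(m') = F(m)\cdot F(j)$ for the further inclusion $j$ and the fact that $Fj$ is inverted. For functoriality I would need that spans compose: given composable partial continuous maps, their composite is defined on the preimage of the second domain under the first map, and I must verify this preimage again lies in the appropriate ultrafilter (using continuity as in~\eqref{eq:1}) so that the relevant left leg is still in $\M$, and that $F$ applied to the resulting commuting diagram of inclusions gives the expected equality. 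This amounts to checking $\M$ admits a calculus of left fractions—that pullbacks of $\M$-maps along arbitrary maps exist in $\ue$ and again lie in $\M$, and that parallel maps equalised after post-composition with an $\M$-map are already equalised after pre-composition with one. I expect the pullback-stability of continuous injections, phrased concretely via preimages of sets in the ultrafilter, to be the crux; once that is in hand the verification that $\bar F$ respects identities and composition is routine. Finally I would note these checks establish precisely the conditions of the standard calculus-of-fractions theorem, so that $\iota$ exhibits $\uf$ as $\ue[\M^{-1}]$, mirroring the filter case of~\cite[Theorem~16]{Blass1977Two-closed}.
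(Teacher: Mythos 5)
Your proposal is correct and follows essentially the same route as the paper: both reduce the claim to the observation that $\M$ (equivalently, the continuous subset inclusions) admits a calculus of fractions, with the crux being that pullbacks of such inclusions along continuous maps exist and stay in the class because $f^{-1}(W)\in\U$ whenever $W\in\V$, after which morphisms of the localisation are spans identified exactly as $=_\U$-classes of partial continuous maps. The only slip is terminological: since the inverted leg sits on the \emph{left} of your spans, the relevant axioms are those of a calculus of \emph{right} fractions in the sense of~\cite{Gabriel1967Calculus}, not left fractions.
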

\begin{proof}
  Each map in $\M$ factors as an isomorphism followed by a continuous
  subset inclusion; whence $\ue[\M^{-1}] = \ue[\I^{-1}]$ where $\I$ is
  the class of all continuous subset inclusions in $\ue$. It is easy
  to see that any map in $\I$ is of the form
  \begin{equation}\label{eq:6}
    m_{WY} \colon (W, \res \V W) \hookrightarrow (Y, \V)
  \end{equation}
  where $W \in \V$ and $\res \V W = \{U \subseteq W : U \in \V\}$.
  Such maps are stable under composition and contain the
  identities. Moreover, given a map~\eqref{eq:6} and
  $f \colon (X, \U) \rightarrow (Y, \V)$ in $\ue$, we have a
  commuting square (in fact a pullback) in $\ue$ of the form
  \begin{equation*}
    \cd{
      {(f^{-1}(W), \res \U {f^{-1}W})}  \ar[r]^-{} \ar@{
        n->}[d]_{m_{f^{-1}W, X}} &
      {(W, \res \V W)} \ar@{ n->}[d]^{m_{WY}} \\
      {(X, \U)} \ar[r]^-{f} &
      {(Y, \V)}
    }
  \end{equation*}
  since $f^{-1}(W) \in \U$ by continuity of $f$. So $\I$ satisfies the
  first three of the four axioms for a calculus of right
  fractions~\cite{Gabriel1967Calculus}, and satisfies the final one
  trivially since it is a class of monomorphisms. We may thus describe
  the localisation $\ue[\I^{-1}]$ as follows. Objects are those of
  $\ue$, and maps $(X, \U) \rightarrow (Y, \V)$ are spans in $\ue$ as
  to the left below, with two such spans being identified if they can
  be completed to a commuting diagram as to the right.
  \begin{equation*}
    \cd[@-0.5em@C-0.5em]{
      & (U, \res \U U) \ar[dl]_-{m_{UX}} \ar[dr]^-{f} \\
      (X, \U) & & (Y, \V)
    } \qquad \qquad
    \cd[@R-0.5em@-0.5em@C-0.5em]{
      & (U, \res \U U) \ar[dl]_-{m_{UX}} \ar[dr]^-{f} \\
      (X, \U) & (W, \res \U W) 
      \ar[u]|-{m_{WU}} \ar[d]|-{m_{WV}}& (Y, \V) \\
      & (V, \res \U V) \ar[ul]^-{m_{VX}} \ar[ur]_-{g}
    }
  \end{equation*}
  Clearly these maps correspond to
  $=_\U$-equivalence classes of partial continuous functions;
  moreover, under this identification, the identity-on-objects functor
  $\ue \rightarrow \ue[\I^{-1}]$ sends $f$ to $(1,f)$, whence
  $\uf \cong \ue[\I^{-1}]$ under $\ue$ as desired.
\end{proof}

\subsection{Tensor product and indexed sum of ultrafilters}
\label{sec:tens-prod-ultr}

The \emph{tensor product} of ultrafilters is sometimes called the
\emph{product}. It is most easily expressed in terms of generalised
quantifiers.

\begin{Defn}
  \label{def:5}
  Let $\U$ and $\V$ be ultrafilters on $X$ and $Y$. The \emph{tensor
    product} $\U \otimes \V$ is the unique
  ultrafilter on $X \times Y$ which for all
  predicates $\varphi$ on $X \times Y$ satisfies:
  \begin{equation}\label{eq:5}
    (\forall_{\U \otimes \V} (x,y)\,\mathord\in X \times Y)\varphi(x,y)
    \qquad \Longleftrightarrow \qquad 
    (\forall_{\U} x\,\mathord\in X)(\forall_{\V} y\,\mathord\in Y)\varphi(x,y)\rlap{ .}
  \end{equation}
\end{Defn}
Instantiating $\varphi$ at the characteristic predicates of subsets
$A \subseteq X \times Y$ yields the following explicit
formula, wherein we write $x^* A$ for $\{y \in Y : (x,y) \in A\}$:
\begin{equation*}
  \U \otimes \V = \{A \subseteq X \times Y : \{ x \in X : x^* A \in
  \V\} \in \U\}\rlap{ .}
\end{equation*}

Using this formula, we see that if
$f \colon (X, \U) \rightarrow (X', \U')$ and
$g \colon (Y, \V) \rightarrow (Y', \V')$ in $\ue$ then also
$f \times g \colon (X \times Y, \U \otimes \V) \rightarrow (X' \times
Y', \U' \otimes \V')$. So tensor product of ultrafilters gives a
monoidal structure on $\ue$, with as unit the unique ultrafilter on
the one-element set. Since maps in $\M$ are closed under the binary
tensor, this monoidal structure descends along $\iota$ to one on
$\uf$.

The following result, which is a special case
of~\cite[Theorem~1.10]{Frayne1962Reduced}, describes
the interaction of the tensor product with ultrapowers
and ultraproducts.

\begin{Prop}
  \label{prop:4}
  Given $\U \in \beta X$ and $\V \in \beta Y$ and an
  $X \times Y$-indexed family of sets $Z$, currying of functions
  induces an isomorphism of ultraproducts
  \begin{equation}\label{eq:8}
    (\Pi_{\U \otimes \V}
    (x,y)\,\mathord\in\,X \times Y)Z(x,y) \cong (\Pi_\U x\,\mathord\in\,X)(\Pi_\V
    y\,\mathord\in\,Y)Z(x,y)\rlap{ ,}
  \end{equation}
  giving, when $Z$ is a constant family, isomorphisms
  $Z^{\U \otimes \V} \cong (Z^\V)^\U$.
\end{Prop}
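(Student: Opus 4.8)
The plan is to prove~\eqref{eq:8} directly from the colimit description~\eqref{eq:2} of ultraproducts, by exhibiting currying as a natural bijection that is compatible with the transition maps of the two colimits. Recall that
\begin{equation*}
  \upr[(x,y)]{X \times Y} Z(x,y) = \colim_{A \in \U \otimes \V} \npr[(x,y)]{A} Z(x,y)\rlap{ ,}
\end{equation*}
where the colimit runs over $A \in \U \otimes \V$ ordered by reverse inclusion, while the right-hand side is a colimit over $U \in \U$ of the sets $\npr[x]{U} (Z(x,\thg)^{\V})$, each of which is itself a colimit over choices of $V_x \in \V$ for $x \in U$. So the core of the argument is to match these two diagrams.

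First I would unwind the right-hand side. An element of $(\Pi_\U x)(\Pi_\V y)Z(x,y)$ is represented by a set $U \in \U$ together with, for each $x \in U$, an element of $Z(x,\thg)^{\V}$; that inner element is in turn represented by some $V_x \in \V$ and a section of $Z(x,\thg)$ over $V_x$. Using the explicit formula $\U \otimes \V = \{A : \{x : x^*A \in \V\} \in \U\}$, such data assembles into a single set $A = \{(x,y) : x \in U,\ y \in V_x\}$, which lies in $\U \otimes \V$ precisely because $\{x : x^*A \in \V\} \supseteq U \in \U$, together with a section of $Z$ over $A$. Conversely, given $A \in \U \otimes \V$ and a section over $A$, I set $U = \{x : x^*A \in \V\} \in \U$ and $V_x = x^*A$ for $x \in U$, recovering the nested data. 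This is the assignment that currying performs, and it clearly sends sections to sections.

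The main obstacle, and the step that needs the most care, is checking that this correspondence respects the equivalence relations defining both colimits --- i.e.\ that it descends to a well-defined bijection on $=_\U$- (and $=_\V$-) classes rather than merely on representatives. Concretely, two pieces of data on the left are identified when they agree on a smaller $A' \in \U \otimes \V$; two pieces on the right are identified when the outer $U$ can be shrunk inside $\U$ and each inner $V_x$ shrunk inside $\V$ so that the sections agree. I would show these two notions of identification coincide by observing that shrinking $A$ to $A' = \{(x,y) : x \in U',\ y \in V'_x\}$ corresponds exactly to shrinking $U$ to $U'$ and each $V_x$ to $V'_x$, using once more the defining formula for $\U \otimes \V$; the cofinality of such ``rectangular'' sets $A'$ among all members of $\U \otimes \V$ below a given one is what makes the two colimit systems mutually cofinal. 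Granting this, the bijection is natural in $Z$ and hence is the claimed isomorphism~\eqref{eq:8}.

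Finally, specialising to a constant family $Z(x,y) = Z$ gives $(\Pi_\V y)Z = Z^{\V}$ and then $(\Pi_\U x)(Z^{\V}) = (Z^{\V})^{\U}$, while the left-hand side is $Z^{\U \otimes \V}$, yielding the stated $Z^{\U \otimes \V} \cong (Z^\V)^\U$.
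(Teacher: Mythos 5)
Your proof is correct. Note, however, that the paper does not prove Proposition~\ref{prop:4} at all: it simply cites it as a special case of \cite[Theorem~1.10]{Frayne1962Reduced}, so there is no in-paper argument to compare against. What you supply is the standard direct verification, organised around the colimit description~\eqref{eq:2}: you observe that the ``indexed-rectangular'' sets $A = \{(x,y) : x \in U,\ y \in V_x\}$ (with $U \in \U$ and each $V_x \in \V$) are cofinal in $\U \otimes \V$ under reverse inclusion --- since any $A \in \U\otimes\V$ contains the rectangular $A' = \{(x,y)\in A : x^*A \in \V\}$, still in $\U\otimes\V$ --- and that shrinking such an $A$ to a smaller rectangular set corresponds exactly to shrinking $U$ in $\U$ and each $V_x$ in $\V$. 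That is precisely what is needed for currying to descend to a bijection of equivalence classes in both directions, and your treatment of the identifications (including the passage from left to right, where one extracts $U = \{x : x^*A \in \V\}$ and $V_x = x^*A$ canonically) is sound. The only point worth flagging is cosmetic: the right-to-left direction as you set it up requires choosing, for each $x \in U$, a representative $(V_x, s_x)$ of a $=_\V$-class, and later a common refinement $W_x$ witnessing agreement of two such choices; this is a harmless use of choice, but it is tidier to define only the left-to-right map (which needs no choices) and verify it is bijective. In short: a correct, self-contained proof of a result the paper delegates to the literature.
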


A more general construction on ultrafilters is that of \emph{indexed
  sum}.

\begin{Defn}
  \label{def:11}
  Let $\U$ be an ultrafilter on $X$ and, for each $x \in X$,
  let $\V(x)$ be an ultrafilter on $Y(x)$. The \emph{indexed
    sum} $(\Sigma_\U x\! \in\! X)\V(x)$ or $\U.\V$ is the unique
  ultrafilter on $\nsig X Y(x) = X.Y$ which for all predicates
  $\varphi$ on $X.Y$ satisfies
  \begin{equation*}
    (\forall_{\U.\V} (x,y)\,\mathord\in X.Y)\varphi(x,y)
    \quad \Longleftrightarrow \quad 
    (\forall_{\U} x\,\mathord\in X)(\forall_{\V(x)} y\,\mathord\in Y(x))\varphi(x,y)\rlap{ .}
  \end{equation*}
\end{Defn}
Note that when $Y$ and $\V$ are constant families, we have
$(\Sigma_\U x \in X) \V = \U \otimes \V$, so that indexed sum really
does generalise tensor product. Like before, we can obtain an explicit
formula for indexed sum by instantiating at the characteristic
functions of predicates, and like before, we have a formula relating
indexed sums with ultraproducts; this is now the general case
of~\cite[Theorem~1.10]{Frayne1962Reduced}.

\begin{Prop}
  \label{prop:12}
  Given $\U \in \beta X$ and $\V \in \npr X \beta(Y(x))$ and an
  $X.Y$-indexed family of sets $Z$, currying of functions
  induces an isomorphism of ultraproducts
  \begin{equation}\label{eq:24}
    (\Pi_{\U.\V}
    (x,y)\,\mathord\in\,X.Y)Z(x,y) \cong (\Pi_\U x\,\mathord\in\,X)(\Pi_{\V(x)}
    \,y\,\mathord\in\,Y(x))Z(x,y)\rlap{ .}
  \end{equation}
\end{Prop}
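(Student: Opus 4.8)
The plan is to unwind both sides using the definition of ultraproduct (Definition~\ref{def:2}) and to exhibit the map~\eqref{eq:24} as honest currying of partial functions, with its inverse given by uncurrying. Writing $W(x) \defeq (\Pi_{\V(x)} y \in Y(x)) Z(x,y)$ for the inner ultraproduct, an element of the right-hand side is a $=_\U$-class of a partial section $x \mapsto r(x) \in W(x)$ defined on some $C \in \U$, while an element of the left-hand side is a $=_{\U.\V}$-class of a partial section $s$ of $\pi_Z$ defined on some $A \in \U.\V$, assigning to each $(x,y) \in A$ an element $s(x,y) \in Z(x,y)$. Currying sends $s$ to the assignment $x \mapsto [\,y \mapsto s(x,y)\,]$: for $\U$-almost all $x$ the fibre $A_x \defeq \{y : (x,y) \in A\}$ lies in $\V(x)$ (this is precisely the meaning of $A \in \U.\V$, by Definition~\ref{def:11}), and on such a fibre $y \mapsto s(x,y)$ represents an element of $W(x)$. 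This is the map~\eqref{eq:24}, and it is the one induced by currying as claimed.

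To see it is well-defined and injective, I would unwind $=_{\U.\V}$ via the defining property of the indexed sum: $s =_{\U.\V} s'$ says exactly that $(\forall_\U x \in X)(\forall_{\V(x)} y \in Y(x))\, s(x,y) \equiv s'(x,y)$, which is in turn the statement that for $\U$-almost all $x$ the curried sections $y \mapsto s(x,y)$ and $y \mapsto s'(x,y)$ are $=_{\V(x)}$-equivalent, i.e.\ represent the same element of $W(x)$; and this is exactly the condition that the two curried sections are $=_\U$-equivalent. Reading this chain of equivalences in both directions gives simultaneously well-definedness and injectivity of~\eqref{eq:24}.

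For surjectivity I would construct the inverse by uncurrying. Given a representative $x \mapsto r(x)$ defined on $C \in \U$, choose for each $x \in C$ a partial section $t_x$ defined on some $B_x \in \V(x)$ with $[t_x] = r(x)$ in $W(x)$, and assemble these into a single partial section $s(x,y) \defeq t_x(y)$ defined on the \emph{rectangular} set $C.B = \{(x,y) : x \in C,\ y \in B_x\}$. Since $B_x \in \V(x)$ for every $x \in C \in \U$, Definition~\ref{def:11} gives $C.B \in \U.\V$, so $s$ represents an element of the left-hand side, and it plainly curries back to $x \mapsto r(x)$. The one point needing care is that this is independent of the chosen $t_x$ and $B_x$: a second choice $t'_x$ agrees with $t_x$ on a set in $\V(x)$ for each $x \in C$, whence the two assembled sections agree on a set in $\U.\V$. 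I expect this uncurrying step to be the main obstacle, since it is exactly an interchange of the product over $x \in C$ with the filtered colimit~\eqref{eq:2} computing $W(x)$---an interchange that fails for products against filtered colimits in general, but holds here because the colimit transition maps are restrictions of partial functions and one may pick a representative fibrewise (using the axiom of choice). Equivalently, one can package the argument colimit-theoretically by checking that the rectangular sets $C.B$ are cofinal in $(\U.\V, \supseteq)$ and that currying gives a bijection $\prod_{(x,y) \in C.B} Z(x,y) \cong \prod_{x \in C}\prod_{y \in B_x} Z(x,y)$ level-wise, then passing to the colimit.
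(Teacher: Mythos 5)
Your proof is correct. Note that the paper does not actually prove Proposition~\ref{prop:12} at all: it is recalled as background and attributed to \cite[Theorem~1.10]{Frayne1962Reduced}, so any complete argument is necessarily ``a different route''. Your direct verification is sound: the unwinding of $A \in \U.\V$ as ``$A_x \in \V(x)$ for $\U$-almost all $x$'' is exactly what Definition~\ref{def:11} gives on characteristic predicates; the chain of equivalences identifying $=_{\U.\V}$ of sections with $=_\U$ of their curried images correctly delivers well-definedness and injectivity in one stroke; and you have put your finger on the one genuinely non-formal point, namely that surjectivity (equivalently, the interchange of the product over $x \in C$ with the filtered colimits computing each $W(x)$) requires a fibrewise choice of representatives $t_x$ on domains $B_x \in \V(x)$, which is where the axiom of choice enters --- consistent with the paper's general use of choice principles. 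Two minor remarks: for surjectivity alone you need only exhibit \emph{some} preimage, so the independence-of-choices check is not strictly necessary (though it is what makes ``uncurrying'' a well-defined inverse); and your cofinality observation is right, since any $A \in \U.\V$ contains the rectangle $C.B$ with $C = \{x : A_x \in \V(x)\}$ and $B_x = A_x$, which packages the same argument in the colimit formulation~\eqref{eq:2}.
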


\section{The main theorem}
\label{sec:main-theorem}

In this section, we prove our main theorem. This makes essential use
of B\"orger's characterisation~\cite{Borger1980A-Characterization} of the
ultrafilter endofunctor, so we begin by recalling this.

\subsection{B\"orger's theorem}

If $u \colon \P X \rightarrow 2$ is a Boolean algebra homomorphism and
$f \colon X \rightarrow Y$, then
$u \circ (f^{-1}) \colon \P Y \rightarrow \P X \rightarrow 2$ is again
a homomorphism, called the \emph{pushforward} of $u$ along $f$.
Identifying $u$ with the corresponding $\U \subseteq \P X$, its
pushforward along $f$ is given by:
\begin{equation}\label{eq:9}
  f_!(\U) = \{ V \subseteq Y : f^{-1}(V) \in \U\}\rlap{ .}
\end{equation}

\begin{Defn}
  \label{def:6}
  The \emph{ultrafilter endofunctor} $\beta \colon \cat{Set}
  \rightarrow \cat{Set}$ has action on objects $X \mapsto \beta X$ and
  action on morphisms $\beta f \colon \beta X \rightarrow \beta Y$
  given by $\U \mapsto f_!(\U)$.
\end{Defn}

In~\cite{Borger1980A-Characterization} B\"orger characterises $\beta$ as
terminal in the category $\fc(\cat{Set}, \cat{Set})$ of
finite-coproduct-preserving endofunctors of $\cat{Set}$. In
reproducing the proof, and subsequently, we will use the following
lemma, whose proof is either an easy exercise for the reader, or a
consequence of the more general Lemma~\ref{lem:2} below. In the
statement, we call a natural transformation
$\alpha \colon F \Rightarrow G \colon \C \rightarrow \D$
\emph{monocartesian} if the naturality square of $\alpha$ at any
monomorphism $f \colon X \rightarrowtail Y$ is a
pullback.

\begin{Lemma}
  \label{lem:8}
  Let $G \colon \cat{Set} \rightarrow \cat{Set}$ preserve finite
  coproducts and let $\alpha \colon F \Rightarrow G$.
  \begin{enumerate}[(i)]
  \item $G$ preserves monomorphisms and pullbacks along monomorphisms;
  \item $F$ preserves finite coproducts if and only if
    $\alpha$ is monocartesian.
  \end{enumerate}
\end{Lemma}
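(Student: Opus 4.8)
The plan is to reduce every assertion to the behaviour of the functors on \emph{coproduct injections}, exploiting two facts about $\cat{Set}$: that every monomorphism $m \colon X \rightarrowtail Y$ is, up to isomorphism over $Y$, a coproduct injection $\iota_1 \colon X \to X + C$ (take $C = Y \setminus \im m$ and note that $[m, \iota_C] \colon X + C \to Y$ is a bijection); and that $\cat{Set}$ is extensive, so that coproduct injections are stable under pullback and a map $f \colon Z \to X + C$ splits its domain as $Z \cong f^{-1}(X) + f^{-1}(C)$, the two summands being the pullbacks of $\iota_1$ and $\iota_2$ along $f$.

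For part (i), since $G$ preserves finite coproducts it carries $\iota_1 \colon X \to X + C$ to the injection $GX \to GX + GC \cong G(X + C)$, which is monic; as every mono has this form, $G$ preserves monomorphisms. For pullbacks along such an $m \cong \iota_1$, I would write an arbitrary $f \colon Z \to X + C$ as $g_1 + g_2 \colon Z_1 + Z_2 \to X + C$ by extensivity, so that the pullback of $\iota_1$ along $f$ is the injection $Z_1 \to Z_1 + Z_2$ together with $g_1$. Applying $G$ and using preservation of finite coproducts rewrites the image square as the pullback of $GX \to GX + GC$ along $Gg_1 + Gg_2$, which is a pullback by extensivity of $\cat{Set}$.

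For part (ii), in the direction ``$F$ preserves finite coproducts $\Rightarrow$ $\alpha$ monocartesian'', I would again present a mono as $\iota_1 \colon X \to X + C$; now both $F$ and $G$ send it to a coproduct injection, and naturality of $\alpha$ forces $\alpha_{X+C}$ to equal $\alpha_X + \alpha_C$, since its composite with each injection is determined. The naturality square then becomes the pullback of $GX \to GX + GC$ along $\alpha_X + \alpha_C$, hence a pullback. For the converse, assuming $\alpha$ monocartesian, I would obtain preservation of the initial object from the square at $\emptyset \rightarrowtail Y$: as $G\emptyset \cong \emptyset$, the pullback of $\alpha_Y$ along $G\emptyset \to GY$ is empty, so $F\emptyset \cong \emptyset$. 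For a binary coproduct $A + B$, the monocartesian squares at the two injections identify the pullbacks of $G\iota_A$ and $G\iota_B$ along $\alpha_{A+B}$ with $FA$ and $FB$; extensivity then exhibits $F(A+B)$ as $FA + FB$ with injections $F\iota_A, F\iota_B$, which is exactly the statement that $F$ preserves the coproduct.

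All the computations are routine bookkeeping with the naturality of $\alpha$ and preservation of finite coproducts; the only real content, and the point needing care, is the repeated appeal to the extensivity of $\cat{Set}$---that coproduct injections are pullback-stable and that a map into a binary coproduct splits its domain into the two inverse images. This is what makes ``mono'' and ``pullback along a mono'' computable purely in terms of coproducts, and it is also the feature that will need to be isolated abstractly when this lemma is subsumed by its more general counterpart.
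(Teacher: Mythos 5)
Your proof is correct and is essentially the argument the paper has in mind: the paper defers the proof of this lemma to its generalisation (Lemma~\ref{lem:2}) for extensive categories, whose proof is exactly your extensivity argument, combined with the observation (made explicitly later in the paper) that $\cat{Set}$ is \emph{Boolean} extensive, so that every monomorphism is, up to isomorphism, a coproduct coprojection. Your closing remark about isolating extensivity as the abstract content is precisely how the paper subsumes this lemma into the general one.
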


\begin{Thm}\cite[Theorem~2.1]{Borger1980A-Characterization}
  \label{thm:3}
  $\beta$ is terminal in $\fc(\cat{Set}, \cat{Set})$.
\end{Thm}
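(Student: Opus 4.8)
The plan is to show that every $F \in \fc(\cat{Set},\cat{Set})$ admits a unique natural transformation $\eta \colon F \Rightarrow \beta$. For the construction, note that by Lemma~\ref{lem:8}(i) the functor $F$ preserves monomorphisms, so for each subset $U \subseteq X$ the inclusion induces a monomorphism $FU \rightarrowtail FX$, allowing me to regard $FU$ as a subset of $FX$. Given $a \in FX$, I would then define $\eta_X(a) = \{\, U \subseteq X : a \in FU \,\}$, and the first task is to prove that this is an ultrafilter on $X$.

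To see that $\eta_X(a)$ is an ultrafilter, I would exploit finite-coproduct preservation directly. Since $F\emptyset = \emptyset$ and $FX \to FX$ is the identity, we get $X \in \eta_X(a)$ and $\emptyset \notin \eta_X(a)$. For any $U$, the partition $X = U + (X\setminus U)$ gives $FX = FU + F(X \setminus U)$, so $a$ lies in exactly one summand; this is precisely axiom (ii)$'$. For binary meets, given $U, V \subseteq X$ I would refine to the four-fold partition $X = (U\cap V) + (U \setminus V) + (V\setminus U) + (X \setminus (U\cup V))$; applying $F$ splits $FX$ into the four corresponding summands, and reading off which summands comprise $FU$, $FV$ and $F(U \cap V)$ shows at once that $a \in F(U\cap V)$ iff $a \in FU$ and $a \in FV$, which is axiom (i). Naturality of $\eta$ then follows from the pullback clause of Lemma~\ref{lem:8}(i): for $f \colon X \to Y$ and $V \subseteq Y$, the evident square with corners $f^{-1}V$, $V$, $X$, $Y$ is a pullback along a monomorphism, so $F$ carries it to a pullback, whence $Ff(a) \in FV$ iff $a \in F(f^{-1}V)$; comparing with the pushforward formula~\eqref{eq:9} gives $\beta f \circ \eta_X = \eta_Y \circ Ff$.

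For uniqueness, I would argue that naturality forces any $\alpha \colon F \Rightarrow \beta$ to coincide with $\eta$, by testing $\alpha$ against characteristic maps. Each finite set carries only principal ultrafilters, so $\beta 1 \cong 1$ and $\beta 2 \cong 2$; identify $\beta 2$ with $2$ by sending the principal ultrafilter at $i$ to $i$, so that $\beta(\chi)(\W) = 1$ iff $\chi^{-1}(\{1\}) \in \W$. Naturality of $\alpha$ at the two coproduct inclusions $1 \to 2$ then forces $\alpha_2 \colon F2 = F1 + F1 \to 2$ to send the first summand to $0$ and the second to $1$. Now fix $a \in FX$ and $U \subseteq X$ with characteristic map $\chi_U \colon X \to 2$. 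Naturality of $\alpha$ at $\chi_U$ reads $\beta(\chi_U)(\alpha_X(a)) = \alpha_2(F\chi_U(a))$, so $U \in \alpha_X(a)$ iff $F\chi_U(a)$ lies in the second copy of $F2$; and by the pullback clause of Lemma~\ref{lem:8}(i) applied to the monomorphism $\{1\} \hookrightarrow 2$ (whose pullback along $\chi_U$ is $U \hookrightarrow X$), this holds iff $a \in FU$. Thus $\alpha_X(a) = \eta_X(a)$ for all $a$, giving $\alpha = \eta$.

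The argument is essentially forced throughout, so I do not anticipate a serious obstacle; the one place demanding genuine care is the verification that $\eta_X(a)$ is a Boolean algebra homomorphism, where the finite (rather than merely binary) decomposition is really needed to handle meets. It is worth observing that the same characteristic-map computation underlies both existence and uniqueness, which explains why the unique transformation is forced to take the explicit form above.
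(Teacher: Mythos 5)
Your candidate transformation is exactly the paper's: $\eta_X(a)$ is the ``type'' $\tau_X(a)$, and naturality is verified in both cases from preservation of pullbacks along monomorphisms (Lemma~\ref{lem:8}(i)). The two arguments diverge in two places. For the meet axiom~(i) the paper again invokes preservation of pullbacks of monics, whereas you use finite-coproduct preservation directly via the four-fold partition of $X$ determined by $U$ and $V$; both are correct, and yours stays entirely inside the coproduct structure. More substantially, your uniqueness argument is genuinely different: the paper observes that any $\sigma \colon T \Rightarrow \beta$ is automatically monocartesian by Lemma~\ref{lem:8}(ii), so the naturality square at $U \subseteq X$ is a pullback and $\sigma_X(x)$ is forced by the image formula~\eqref{eq:16}; you instead pin $\alpha$ down by naturality at characteristic maps $\chi_U \colon X \rightarrow 2$, using $\beta 2 \cong 2$ and the decomposition $F2 = F1 + F1$. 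Your route is correct and needs only part~(i) of Lemma~\ref{lem:8} rather than part~(ii); the paper's monocartesian argument is the one it later reuses verbatim for the generalisation to extensive domains in Proposition~\ref{prop:1} (though your version would also adapt there, since coproduct summands of $X$ in an extensive category correspond to maps $X \rightarrow 1 + 1$).

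One omission worth repairing: the statement asserts that $\beta$ is terminal \emph{in} $\fc(\cat{Set},\cat{Set})$, so one must also check that $\beta$ itself preserves finite coproducts. The paper devotes the first part of its proof to this, using the description~\eqref{eq:16} of the image of $\beta f$ for injective $f$ to show that a coproduct cone $(y_1, y_2)$ is sent to a coproduct cone $(\beta y_1, \beta y_2)$. The verification is routine, but your write-up does not address it and it is part of the theorem.
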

\begin{proof}
  For an injection $f \colon X \rightarrow Y$, the map
  $\beta f \colon \beta X \rightarrow \beta Y$ is also injective with
  \begin{equation}\label{eq:16}
    \im \beta f = \{\V \in \beta Y : f(X) \in \V \}\rlap{ .}
  \end{equation}
  Indeed, since $f$ is injective,
  $f^{-1} \colon \P Y \rightarrow \P X$ is surjective and so
  $\beta f = (\thg) \circ (f^{-1})$ is injective. As for its image:
  each $\U \in \beta X$ contains $X = f^{-1}(f(X))$, so
  by~\eqref{eq:9} each $f_!(\U)$ contains $f(X)$. Conversely, if
  $\V \in \beta Y$ contains $f(X)$, then
  $\U = \{U \subseteq X: f(U) \in \V\}$ is an ultrafilter on $X$ with
  $f_!(\U) = \V$.

  We first use this to show $\beta \in \fc(\cat{Set}, \cat{Set})$.
  Clearly $\beta(\emptyset) = \emptyset$; while if we have a coproduct
  ${y_1 \colon Y_1 \rightarrow Y \leftarrow Y_2 \colon y_2}$, then the
  maps
  $\beta y_1 \colon \beta Y_1 \rightarrow \beta Y \leftarrow \beta Y_2
  \colon \beta y_2$ are each injective with as images the sets
  $A = \{\,\U \in \beta Y : \im y_1 \in \U\,\}$ and
  $B = \{\,\U \in \beta Y : \im y_2 \in \U\,\}$. Since $\im y_1$ and
  $\im y_2$ partition $Y$, each $\U \in \beta Y$ lies in
  \emph{exactly} one of $A$ or $B$ whence $(\beta y_1, \beta y_2)$ is
  again a coproduct cone.

  We now show $\beta$ is terminal in $\fc(\cat{Set}, \cat{Set})$.
  Given $T \in \fc(\cat{Set}, \cat{Set})$ and
  $x \in TX$, define the \emph{type} of $x$ as the ultrafilter
  on $X$ given by:
  \begin{equation*}
    \tau_X(x) = \{U \subseteq X : \text{$x$ is in the image of the monic $TU \rightarrowtail
      TX$}\}\rlap{ .}
  \end{equation*}
  Here, $TU \rightarrowtail TX$ is the $T$-image of the inclusion
  $U \subseteq X$, and so monic by Lemma~\ref{lem:8}. 
  $\tau_X(x)$ satisfies axiom (i) for an ultrafilter since
  $T$ preserves pullbacks of monics, and satisfies (ii)$'$ as
  $TU \rightarrowtail TX \leftarrowtail T(X \setminus U)$ is the
  $T$-image of a coproduct diagram and so itself a coproduct.

  So we have functions $\tau_X \colon TX \rightarrow \beta X$. To
  verify their naturality in $X$ we must show for any $x \in TX$ and
  $f \colon X \rightarrow Y$ that $\tau_Y(Tf(x)) = f_!(\tau_X(x))$. So
  for any $V \subseteq Y$, we must show $Tf(x) \in TY$ is in the image
  of
  $TV \rightarrowtail TY$ if and only if $x \in TX$ is in the image of
  $T(f^{-1}(V)) \rightarrowtail TX$; which is so because $T$ preserves
  the pullback of $V \rightarrowtail Y$ along
  $f \colon X \rightarrow Y$ by Lemma~\ref{lem:8}. So we have
  $\tau \colon T \Rightarrow \beta$.

  Finally, we check uniqueness of $\tau$. Any
  $\sigma \colon T \Rightarrow \beta$ is monocartesian
  by Lemma~\ref{lem:8}; and so for each $m \colon U \subseteq X$ the
  following square is a pullback:
  \begin{equation*}
    \cd[@-0.7em]{
      {TU} \ar@{ >->}[r]^-{Tm} \ar[d]_{\sigma_V} &
      {TX} \ar[d]^{\sigma_X} \\
      {\beta U} \ar@{ >->}[r]^-{\beta m} &
      {\beta X}\rlap{ .}
    }
  \end{equation*}
  Thus, $x \in TX$ factors through $Tm$ if and only if $\sigma_X(x)$
  factors through $\beta m$ which by~\eqref{eq:16} happens just when
  $U \in \sigma_X(x)$. So $\sigma_X(x) = \tau_X(x)$ as desired.
\end{proof}

\subsection{The main theorem}
\label{sec:proof-main-theorem}

We now exploit B\"orger's theorem to prove our main
Theorem~\ref{thm:2}. In doing so, we make use of the well-known
generalisation of~\eqref{eq:26} stating that~\emph{any slice of a
  presheaf category is equivalent to a presheaf category}.

Indeed, given $X \in [\A, \cat{Set}]$, the \emph{category of elements}
$\el X$ has as objects, pairs $(A \in \A, x \in XA)$ and as morphisms
$(A,x) \rightarrow (A',x')$, maps $f \colon A \rightarrow A'$ in $\A$
such that $x' = Xf(x)$. The equivalence in question is now
\begin{equation}\label{eq:10}
  [\el X,\cat{Set}] \simeq [\A, \cat{Set}] / X \rlap{ ,}
\end{equation}
and is constructed by applying~\eqref{eq:26} componentwise as follows.
Going from left to right, $Y \colon \el X \rightarrow \cat{Set}$ is
sent to $\pi \colon \int\!Y \rightarrow X$ whose $A$-component is
given by the first projection map $\nsig {XA} Y(A,x) \rightarrow XA$,
and where the action of $\int\!Y$ on maps is induced from those of $X$
and $Y$. Going from right to left, $p \colon E \rightarrow X$ in
$[\A, \cat{Set}]/X$ is sent to $\tilde E$ in $[\el X, \cat{Set}]$ with
$\smash{\tilde E(A,x) = p_A^{-1}(x)} \subseteq EA$ and action on maps
inherited from $E$. For a detailed proof of the equivalence, see for
example~\cite[Proposition~A1.1.7]{Johnstone2002Sketches}.

\begin{Thm}
  \label{thm:2}
  The category $\fc(\cat{Set}, \cat{Set})$ of finite
  coproduct-preserving endofunctors of $\cat{Set}$ is equivalent to
  $[\uf, \cat{Set}]$. 
\end{Thm}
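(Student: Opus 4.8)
The plan is to realise $\fc(\cat{Set},\cat{Set})$ as a full subcategory of a slice of $[\cat{Set},\cat{Set}]$, and then to transport this across the slice-of-presheaf-category equivalence~\eqref{eq:10}. The crucial observation is that the category of elements $\el\beta$ of the functor $\beta\colon\cat{Set}\to\cat{Set}$ is precisely $\ue$: an object of $\el\beta$ is a pair $(X,\U)$ with $\U\in\beta X$, and a morphism $(X,\U)\to(Y,\V)$ is a map $f$ with $\V=\beta f(\U)=f_!(\U)$, which by~\eqref{eq:9} is exactly the continuity condition~\eqref{eq:1}. Hence~\eqref{eq:10}, applied with $\A=\cat{Set}$ and $X=\beta$, yields an equivalence $[\ue,\cat{Set}]\simeq[\cat{Set},\cat{Set}]/\beta$, under which a natural transformation $\tau\colon T\Rightarrow\beta$ corresponds to the functor $\tilde T\colon\ue\to\cat{Set}$ sending $(X,\U)$ to the fibre $\tau_X^{-1}(\U)\subseteq TX$.

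Next I would invoke B\"orger's Theorem~\ref{thm:3}. Since $\beta$ is terminal in $\fc(\cat{Set},\cat{Set})$, each $T\in\fc(\cat{Set},\cat{Set})$ carries a \emph{unique} natural transformation $\tau\colon T\Rightarrow\beta$, and this identifies $\fc(\cat{Set},\cat{Set})$ with the full subcategory of $[\cat{Set},\cat{Set}]/\beta$ spanned by those $(T,\tau)$ for which $T$ preserves finite coproducts. As $\beta$ itself preserves finite coproducts, Lemma~\ref{lem:8}(ii) reexpresses this condition: $T$ preserves finite coproducts if and only if $\tau$ is monocartesian. The heart of the argument is then to check that, under the equivalence of the previous paragraph, $\tau$ is monocartesian precisely when $\tilde T$ inverts every map of $\M$. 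Unwinding this: the naturality square of $\tau$ at a subset inclusion $m\colon W\hookrightarrow Y$ is a pullback if and only if, for each $\V\in\beta Y$ with $W\in\V$ --- these being exactly the ultrafilters in the image of $\beta m$, by~\eqref{eq:16} --- the induced map of fibres $\tilde T(W,\res\V W)\to\tilde T(Y,\V)$ is a bijection; that is, if and only if $\tilde T$ inverts each inclusion $m_{WY}$ of~\eqref{eq:6}. Since every map of $\M$ is, up to isomorphism, such an inclusion, this is exactly the condition that $\tilde T$ invert $\M$.

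Finally, Proposition~\ref{prop:2} identifies $\uf$ with $\ue[\M^{-1}]$, so the universal property of localisation identifies $[\uf,\cat{Set}]$ with the full subcategory of $[\ue,\cat{Set}]$ on the $\M$-inverting functors. Stringing the equivalences together then gives
\[
\fc(\cat{Set},\cat{Set})\;\simeq\;\{\,(T,\tau): \tau\text{ monocartesian}\,\}\;\simeq\;\{\,\tilde T: \tilde T\text{ inverts }\M\,\}\;\simeq\;[\uf,\cat{Set}]\rlap{ ,}
\]
as required. I expect the main obstacle to be the central verification that monocartesianness of $\tau$ matches $\M$-inversion of $\tilde T$: one must pass carefully between the pullback formulation of Lemma~\ref{lem:8}, the fibrewise description of $\tilde T$ coming from~\eqref{eq:10}, and the explicit generators of $\M$ supplied by the proof of Proposition~\ref{prop:2}. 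The remaining ingredients --- terminality of $\beta$, the identification $\el\beta=\ue$, and the localisation --- are then purely formal assemblies of results already in hand.
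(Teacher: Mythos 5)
Your proposal is correct and follows essentially the same route as the paper's own proof: Börger's terminality of $\beta$ identifies $\fc(\cat{Set},\cat{Set})$ with the monocartesian part of $[\cat{Set},\cat{Set}]/\beta$, the category-of-elements equivalence~\eqref{eq:10} turns this slice into $[\ue,\cat{Set}]$ via $\el\beta\cong\ue$, and the fibrewise pullback criterion at subset inclusions matches monocartesianness with $\M$-inversion, whence Proposition~\ref{prop:2} delivers $[\uf,\cat{Set}]$. The central verification you flag as the likely obstacle is carried out exactly as you sketch it, using~\eqref{eq:16} to identify the fibres over which the comparison must be a bijection.
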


\begin{proof}
  Note that $T \in [\cat{Set}, \cat{Set}]$ preserves finite coproducts
  if and only if it admits a monocartesian transformation to $\beta$,
  which is then necessarily unique. The ``if'' direction of this claim
  follows from Lemma~\ref{lem:8}; whereupon the ``only if'' direction
  and the unicity follow from Theorem~\ref{thm:3}. So we have an
  isomorphism of categories
  \begin{equation}\label{eq:12}
   \fc(\cat{Set}, \cat{Set}) \cong [\cat{Set}, \cat{Set}] \smc \beta
  \end{equation}
  where to the right we have the full, replete, subcategory
  $[\cat{Set}, \cat{Set}] \smc \beta$ of
  $[\cat{Set}, \cat{Set}] / \beta$ on the monocartesian arrows.

  Now, the full slice category $[\cat{Set}, \cat{Set}] / \beta$ is
  equivalent to $[\el \beta, \cat{Set}]$. Here, objects of
  $\el \beta$ are pairs $(X \in \cat{Set},\, \U \in \beta X)$, while maps
  $(X, \U) \rightarrow (Y, \V)$ are functions
  $f \colon X \rightarrow Y$ such that $f_!(\U) = \V$.
  Comparing~\eqref{eq:1} with~\eqref{eq:9}, these are exactly the
  \emph{continuous} maps, so that $\el \beta \cong \ue$
  and~\eqref{eq:10} becomes an equivalence:
  \begin{equation}\label{eq:11}
    [\cat{Set}, \cat{Set}] / \beta \simeq [\ue, \cat{Set}]\rlap{ .}
  \end{equation}

  An object $\tau \colon T \Rightarrow \beta$ to the left of this
  equivalence lies in the full replete subcategory
  $[\cat{Set}, \cat{Set}] \smc \beta$ just when for each monic
  $f \colon X \rightarrowtail Y$ and $\U \in \beta X$, the map on
  fibres $\tau_X^{-1}(\U) \rightarrow \tau_Y^{-1}(f_!(\U))$ is an
  isomorphism. This is equally the condition that the corresponding
  $\bar \tau \in [\ue, \cat{Set}]$ to the right lies in the full,
  replete subcategory of functors which send the class $\M$ of
  continuous injective functions to isomorphisms. By
  Proposition~\ref{prop:2}, this subcategory is isomorphic to
  $[\uf, \cat{Set}]$ via restriction along
  $\iota \colon \ue \rightarrow \uf$. So~\eqref{eq:11} restricts to an
  equivalence
  $[\cat{Set}, \cat{Set}] \smc \beta \simeq [\uf, \cat{Set}]$, and
  combining this with~\eqref{eq:12} yields the desired equivalence
  $\fc(\cat{Set}, \cat{Set}) \simeq [\uf, \cat{Set}]$.
\end{proof}

Chasing through the above equivalences, we see that for each $A \in
\fc(\cat{Set}, \cat{Set})$, the corresponding $\tilde A \colon \uf
\rightarrow \cat{Set}$ is defined on objects by
\begin{equation}\label{eq:17}
  \tilde A(X, \U) = \{\,x \in AX : \tau_X(x) = \U\,\} \cong \textstyle\bigcap_{U \in
    \U} \im AU \subseteq AX
  \rlap{ .}
\end{equation}
For its definition on morphisms, let the map
$(X, \U) \rightarrow (Y, \V)$ of $\uf$ be represented by the partial
continuous $f \colon X \rightharpoonup Y$ defined on $U \in \U$. Then
the induced function $\tilde A(X, \U) \rightarrow \tilde A(Y,\V)$ is
defined by $x \mapsto Af(x')$, where $x' \in AU$ is the lifting of $x$
through $AU \rightarrowtail AX$ guaranteed by the fact that
$U \in \tau_X(x)$.

In the other direction, for any $B \colon \uf \rightarrow \cat{Set}$,
the corresponding finite-coproduct-preserving
$\int\!B \colon \cat{Set} \rightarrow \cat{Set}$ is defined by
\begin{equation}\label{eq:13}
  (\textstyle\int\!B)Y = \sum_{\V\in\beta Y} B(Y, \V) \quad \text{and} \quad
  (\int\!B)f \colon (\V, a) \mapsto (f_!(\V), Af(a))\rlap{ .}
\end{equation}

\subsection{Relation to ultrapowers and tensor products}
\label{sec:relation-ultrapowers}
We now show that both ultrapowers and the tensor product of
ultrafilters arise naturally from the preceding
equivalence. We begin with ultrapowers. 

\begin{Cor}
  \label{cor:4}
  Under the equivalence of Theorem~\ref{thm:2}, the representable
  functor $\uf( (X, \U), \thg) \colon \uf \rightarrow \cat{Set}$
  corresponds to the ultrapower functor
  $(\thg)^\U \colon \cat{Set} \rightarrow \cat{Set}$.
\end{Cor}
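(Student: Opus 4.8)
The plan is to compute, via the explicit formula~\eqref{eq:13}, the finite-coproduct-preserving endofunctor $\int\!B$ associated under Theorem~\ref{thm:2} to the representable $B = \uf((X,\U),\thg)$, and to identify it with the ultrapower functor $(\thg)^\U$. Since the equivalence of Theorem~\ref{thm:2} sends $B$ to $\int\!B$, the corollary follows once we exhibit an isomorphism $\int\!B \cong (\thg)^\U$. By~\eqref{eq:13} we have $(\int\!B)Y = \sum_{\V\in\beta Y}\uf((X,\U),(Y,\V))$, so the heart of the argument is a natural bijection
\[
  \textstyle\sum_{\V\in\beta Y}\uf((X,\U),(Y,\V)) \;\cong\; Y^\U \rlap{ .}
\]

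To produce this bijection, recall from Definition~\ref{def:3} that a morphism $(X,\U)\to(Y,\V)$ of $\uf$ is a $=_\U$-class of partial continuous maps $f\colon X\rightharpoonup Y$ defined on a set in $\U$. For such a partial $f$, the set $f_!(\U) = \{V\subseteq Y : f^{-1}(V)\in\U\}$ is readily checked to be an ultrafilter on $Y$ (using axiom~(ii)$'$ and that the domain of $f$ lies in $\U$), and it depends only on the $=_\U$-class of $f$. Comparing with~\eqref{eq:1}, continuity of $f$ into $(Y,\V)$ is precisely the requirement $\V = f_!(\U)$; hence each $=_\U$-class of partial maps defined on a set in $\U$ is continuous into $(Y,\V)$ for exactly one $\V$, namely $\V = f_!(\U)$. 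Summing over $\V\in\beta Y$ therefore simply reassembles all such $=_\U$-classes, which by Definition~\ref{def:2} is exactly $Y^\U$; explicitly, the bijection sends $[f]\in Y^\U$ to the pair $(f_!(\U),[f])$. As a by-product this confirms that $(\thg)^\U$ preserves finite coproducts, since $\int\!B$ does.

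Finally I would check naturality in $Y$, which is where the only real bookkeeping lies. For $h\colon Y\to Y'$, the action supplied by~\eqref{eq:13} sends $(\V,[f])$ to $(h_!\V,\, [h\circ f])$, since the morphism-part of $B$ is post-composition with $\iota(h)$ and $\iota(h)\circ[f]$ is represented by the partial map $h\circ f$. Under the identification above this is the assignment $[f]\mapsto[h\circ f]$, which is exactly the effect of $(\thg)^\U$ on $h$; one uses here that $h_!(f_!\U) = (h\circ f)_!\U$, so the target pushforward ultrafilter is the expected one. The main obstacle is thus not conceptual but a matter of handling partial maps and their pushforwards carefully---in particular, verifying that composition in $\uf$ is computed by composing representing partial functions and that every construction respects the $=_\U$-identifications; both are immediate from the description of $\uf$ established in Proposition~\ref{prop:2} and Definition~\ref{def:3}.
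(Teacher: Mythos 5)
Your proposal is correct and follows essentially the same route as the paper: apply the explicit formula~\eqref{eq:13} to the representable at $(X,\U)$, observe that continuity forces $\V = f_!(\U)$ so the sum over $\V \in \beta Y$ collapses to the set of all $=_\U$-classes of partial maps defined on a set in $\U$, i.e.\ to $Y^\U$. The paper's proof is just a terser version of yours, omitting the explicit naturality check that you rightly note is routine.
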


\begin{proof}
  Taking $B$ to be 
  $y_{(X,\U)} = \uf((X, \U), \thg)$ in~\eqref{eq:13}, we have that
  \begin{equation*}
    (\textstyle\int\!y_{(X,\U)})(X) = \textstyle\sum_{\V \in \beta Y} \uf( (X, \U), (Y, \V) ) \rlap{ .}
  \end{equation*}
  An element of this set is a pair $\V \in \beta Y$ together with an
  $=_\U$-equivalence class of continuous partial functions
  $f \colon (X,\U) \rightharpoonup (Y,\V)$ defined on a set in $\U$.
  The continuity condition~\eqref{eq:1} forces $\V = f_!(\U)$ and so
  this is equally a $=_\U$-equivalence class of partial functions
  $f \colon X \rightharpoonup Y$ defined on a set in $\U$; thus an
  element of the ultrapower $Y^\U$. This proves that
  $\textstyle\int\!y_{(X,\U)} \cong (\thg)^\U$ as desired.
\end{proof}

As remarked in the introduction, we can use this result to recover the
category $\uf$ from $\fc(\cat{Set}, \cat{Set})$. Recall that an
object $X$ of a category $\E$ is \emph{small-projective} if the
hom-functor $\E(X, \thg) \colon \E \rightarrow \cat{Set}$ preserves
all small colimits.

\begin{Cor}
  \label{cor:2}
  The category $\uf^\mathrm{op}$ is equivalent to the full subcategory of
  $\fc(\cat{Set}, \cat{Set})$ on the small-projectives.
\end{Cor}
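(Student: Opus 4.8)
The plan is to transport the statement across the equivalence of Theorem~\ref{thm:2} and argue entirely inside the presheaf category $[\uf, \cat{Set}]$. An equivalence of categories preserves and reflects small colimits and preserves hom-sets up to isomorphism, so it carries small-projectives to small-projectives; it therefore suffices to show that the full subcategory of small-projectives in $[\uf, \cat{Set}]$ is equivalent to $\uf^{\mathrm{op}}$. First I would record that each covariant representable $y_{(X, \U)} = \uf((X, \U), \thg)$ is small-projective: colimits in $[\uf, \cat{Set}]$ are computed pointwise, and by the Yoneda lemma $[\uf, \cat{Set}](y_{(X,\U)}, \thg)$ is evaluation at $(X, \U)$, which preserves all small colimits. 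These representables also form a dense generator, so every $B \in [\uf, \cat{Set}]$ is a colimit of representables. Finally, the contravariant Yoneda embedding $\uf^{\mathrm{op}} \to [\uf, \cat{Set}]$ is fully faithful, so the full subcategory of representables is already equivalent to $\uf^{\mathrm{op}}$; the remaining task is to prove that the small-projectives are exactly the representables.

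For the nontrivial inclusion I would run the standard retract argument. Let $P$ be small-projective and write $P \cong \colim_i y_{(X_i, \U_i)}$ as a colimit of representables. Since $[\uf, \cat{Set}](P, \thg)$ preserves this colimit, the canonical bijection $\colim_i [\uf, \cat{Set}](P, y_{(X_i, \U_i)}) \to [\uf, \cat{Set}](P, \colim_i y_{(X_i, \U_i)})$ lets us represent $\id_P$ by a single map $f \colon P \to y_{(X_i, \U_i)}$ whose composite with the colimit coprojection is $\id_P$. Hence every small-projective is a retract of a representable, and the whole statement reduces to showing that any retract of a representable is again representable.

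By full faithfulness of Yoneda, a retract of $y_{(X, \U)}$ is classified by an idempotent on $(X, \U)$ in $\uf$, and it is itself representable precisely when that idempotent splits in $\uf$. So the hard part is to prove that idempotents split in $\uf$, and this is the one step using the concrete description of $\uf$. Let $[e]$ be idempotent on $(X, \U)$, represented by a partial continuous $e \colon X \rightharpoonup X$ defined on some $U \in \U$. Continuity of an endomorphism forces $e_!(\U) = \U$, while idempotency gives $e(e(x)) = e(x)$ for $\U$-almost all $x$; thus $e$ sends $\U$-almost every point into its fixed-point set $Z = \{x : e(x) = x\}$, so $e^{-1}(Z) \in \U$, and as $e_!(\U) = \U$ this gives $Z \in \U$. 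I would then split $[e]$ through $(Z, \res \U Z)$: the inclusion $Z \hookrightarrow X$ is a continuous map $s$, the corestriction of $e$ to $Z$ is a continuous map $r$, and one checks $s \circ r =_\U e$ while $r \circ s =_\U \id_Z$, the latter because $e$ is the identity on $Z$. Hence idempotents split in $\uf$.

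Assembling these steps, the small-projectives of $[\uf, \cat{Set}]$ are precisely the representables, so the full subcategory they span is equivalent via Yoneda to $\uf^{\mathrm{op}}$; transporting back along Theorem~\ref{thm:2} yields the claimed equivalence, under which, by Corollary~\ref{cor:4}, the small-projective $(\thg)^\U$ corresponds to $(X, \U) \in \uf^{\mathrm{op}}$. The main obstacle is the splitting of idempotents; everything else is formal presheaf theory.
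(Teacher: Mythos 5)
Your proof is correct, but it takes a genuinely different route at the decisive step. The paper quotes the general fact (Borceux, Lemma~6.5.10) that the small-projectives of a presheaf category $[\A,\cat{Set}]$ are exactly the retracts of representables, and then disposes of retracts in one line by citing Blass's Theorem~5: the \emph{only} endomorphisms in $\uf$ are identities, so any idempotent $ip$ on $y_{(X,\U)}$ is the identity and every retract of a representable is isomorphic to it. You instead re-derive the Borceux lemma by hand (representables are small-projective since colimits are pointwise; a small-projective $P$ written as a colimit of representables is a retract of one of them) and then prove directly that idempotents in $\uf$ split, via the fixed-point set $Z=\{x: e(x)=x\}$: idempotency puts $e^{-1}(Z)$ in $\U$, continuity then puts $Z$ in $\U$, and the (co)restriction of $e$ to $Z$ splits $[e]$ through $(Z,\res{\U}{Z})$. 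This argument is sound --- and note that since $Z\in\U$ the inclusion $(Z,\res{\U}{Z})\hookrightarrow(X,\U)$ is inverted in $\uf$, so your splitting object is isomorphic to $(X,\U)$ after all, consistently with Blass's rigidity theorem. What your route buys is self-containedness: you need neither external citation, only the elementary structure of $\uf$. What the paper's route buys is brevity and a stronger conclusion (every retract of $y_{(X,\U)}$ is isomorphic to $y_{(X,\U)}$, not merely representable). One caveat you share with the paper: $\uf$ is a large (locally small) category, so the assertion that every object of $[\uf,\cat{Set}]$ is a \emph{small} colimit of representables needs care; you only need it for the given small-projective $P$, which is exactly the content of the lemma the paper cites, so this is a presentational rather than a substantive gap.
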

\begin{proof}
  For any locally small $\A$, the small-projectives in
  $[\A, \cat{Set}]$ are precisely the retracts of representable
  functors; see, for
  example,~\cite[Lemma~6.5.10]{Borceux1994Handbook1}. So if
  representables are closed under retracts, then $\A^\mathrm{op}$ is
  equivalent to the full subcategory of $[\A, \cat{Set}]$ on the
  small-projectives.

  It thus suffices to show that
  representables in $[\uf, \cat{Set}]$ are closed under retracts. But
  if $i \colon A \rightarrowtail y_{(X, \U)}$ and
  $p \colon y_{(X,\U)} \twoheadrightarrow A$ with $pi=1$, then
  $ip \colon y_{(X,\U)} \rightarrow y_{(X,\U)}$ is the image under $y$
  of an idempotent on $(X,\U)$. Since
  by~\cite[Theorem~5]{Blass1970Orderings}, the only idempotents
  (indeed, the only endomorphisms) in $\uf$ are the identities, we
  thus have $ip = 1$ as well as $pi = 1$, so that $A \cong y_{(X,\U)}$ is
  again representable.
\end{proof}

We now turn to the tensor product of ultrafilters. The category
$\fc(\cat{Set}, \cat{Set})$ has a monoidal structure given by
composition, and transporting this across the equivalence of
Theorem~\ref{thm:2} yields a monoidal structure $(I, \otimes)$ on
$[\uf, \cat{Set}]$.

\begin{Prop}
  \label{prop:3}
  The representables in $[\uf, \cat{Set}]$ are closed under the
  monoidal structure, and the induced monoidal structure on $\uf$
  is that given by tensor product of ultrafilters.
\end{Prop}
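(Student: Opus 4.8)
The plan is to transport the whole computation to the functor side via Theorem~\ref{thm:2} and Corollary~\ref{cor:4}, where the monoidal structure is simply composition of endofunctors. First I would record that composition does restrict to $\fc(\cat{Set}, \cat{Set})$---a composite of finite-coproduct-preserving functors again preserves finite coproducts---with the identity functor as unit, and that this is the monoidal structure being transported to $(I, \otimes)$ on $[\uf, \cat{Set}]$.

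By Corollary~\ref{cor:4} the representable $y_{(X, \U)} = \uf((X,\U), \thg)$ corresponds to the ultrapower functor $(\thg)^\U$, so to show the representables are closed under $\otimes$ it suffices to identify the composite of two ultrapower functors. Taking the constant-family case of Proposition~\ref{prop:4} gives a natural isomorphism $Z^{\U \otimes \V} \cong (Z^\V)^\U$; reading the right-hand side as the endofunctor $(\thg)^\U \circ (\thg)^\V$, this says $(\thg)^{\U \otimes \V} \cong (\thg)^\U \circ (\thg)^\V$. Thus the composite of the functors corresponding to $y_{(X,\U)}$ and $y_{(Y,\V)}$ is again an ultrapower functor, namely $(\thg)^{\U \otimes \V}$, which by Corollary~\ref{cor:4} corresponds to $y_{(X \times Y, \U \otimes \V)}$. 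For the unit, I would observe that the identity functor is the ultrapower $(\thg)^{\U}$ by the unique ultrafilter on $\aone$ (a partial function $\aone \rightharpoonup Y$ defined on a set in this ultrafilter is just an element of $Y$), so that $I$ corresponds to $y_{(\aone, \U)}$; and $(\aone, \U)$ is precisely the unit for the tensor product of ultrafilters. This establishes closure of the representables under $(I, \otimes)$.

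Since the functor $(X, \U) \mapsto y_{(X,\U)}$ is fully faithful by the Yoneda lemma, the object-level identifications $y_{(X,\U)} \otimes y_{(Y,\V)} \cong y_{(X \times Y, \U \otimes \V)}$ and $I \cong y_{(\aone, \U)}$ transport the monoidal structure on the full subcategory of representables back to a monoidal structure on $\uf$ whose tensor on objects is exactly the tensor product of ultrafilters of Definition~\ref{def:5}. To finish, one checks that the associativity and unit constraints match: these are forced on the representables by those of composition in $\fc(\cat{Set}, \cat{Set})$, and correspond under the isomorphisms of Proposition~\ref{prop:4} to the evident coherence isomorphisms for $\otimes$ on $\uf$.

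The main obstacle I anticipate is not the object-level calculation---which is essentially Proposition~\ref{prop:4}---but upgrading it to a genuinely \emph{monoidal} identification. One must verify that the isomorphism $Z^{\U \otimes \V} \cong (Z^\V)^\U$ is natural in $Z$, so that it is an isomorphism of endofunctors rather than merely a bijection at each set, and, more delicately, that it is compatible with the associators. Since the tensor product of ultrafilters is not symmetric, the order of the factors must be tracked carefully throughout, and the coherence of the curried isomorphisms of Proposition~\ref{prop:4} under triple composites must be matched against the strict associativity of functor composition in $\fc(\cat{Set}, \cat{Set})$.
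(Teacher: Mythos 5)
Your proposal is correct and follows essentially the same route as the paper: identify representables with ultrapower functors via Corollary~\ref{cor:4}, use the constant-family case of Proposition~\ref{prop:4} to see that $(\thg)^\U \circ (\thg)^\V \cong (\thg)^{\U\otimes\V}$, and conclude by full faithfulness of Yoneda that $y_{(X,\U)} \otimes y_{(Y,\V)} \cong y_{(X\times Y,\U\otimes\V)}$. Your extra remarks on the unit (justifying that the identity functor is the ultrapower at the unique ultrafilter on $\aone$) and on naturality and coherence of the associators are sound points of care that the paper passes over in silence, but they do not change the argument.
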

\begin{proof}
  The identity functor $\cat{Set} \rightarrow \cat{Set}$ corresponds
  to the functor $\uf \rightarrow \cat{Set}$ represented by the unique
  ultrafilter on a one-element set, which is the unit for the
  monoidal structure on $\uf$. On the other hand, if
  $A,B \in [\uf, \cat{Set}]$ are represented by $(X, \U)$ and $(Y,\V)$
  respectively, then by Theorem~\ref{thm:2} we have
  $\textstyle\int\!A \cong (\thg)^\U$ and
  $\textstyle\int\!B \cong (\thg)^\V$, and so
  $\textstyle\int\!A \circ \textstyle\int\!B \cong ((\thg)^\V)^\U
  \cong (\thg)^{\U \otimes \V}$ by Proposition~\ref{prop:4}. It
  follows that $A \otimes B$ is represented by
  $(X \times Y, \U \otimes \V) = (X, \U) \otimes (Y, \V)$ in $\uf$.
\end{proof}

The monoidal structure on $[\uf, \cat{Set}]$ is easy to write down
explicitly. As already noted, the unit $I$ is the functor
representable at the unique ultrafilter on the one-element set, while
the binary tensor can be given as $A \otimes B = \int\!A \circ B$,
where $\int\! A$ is defined as in~\eqref{eq:13}. This yields the
formulae:
\begin{align*}
  I(X, \U) &= \begin{cases} 1 & \text{if $\U$ is principal;} \\ 0 &
    \text{otherwise.}
  \end{cases}\\
  (A \otimes B)(X, \U) &= \textstyle \sum_{\V \in \beta(B(X, \U))}
  A(B(X, \U), \V)\rlap{ .}
\end{align*}

We have an alternative description of the binary tensor product by
exploiting the fact that,
since the composition product on $\fc(\cat{Set}, \cat{Set})$
preserves colimits in its first variable, so too does the tensor
product $\otimes$ on $[\uf, \cat{Set}]$:
\begin{equation}
  \label{eq:31}
  \begin{aligned}
    (A \otimes B)(X,\U) &\cong \left((\textstyle\int^{(Y, \V) \in \uf} A(Y, \V)
      \times y_{(Y,\V)})
      \otimes B\right)(X,\U) \\ &\cong \textstyle\int^{(Y, \V) \in \uf} A(Y, \V) \times (y_{(Y,\V)}
    \otimes B)(X,\U)\\
    &\cong \textstyle\int^{(Y, \V) \in \uf} A(Y, \V) \times B(X,\U)^\V\rlap{ .}
  \end{aligned}
\end{equation}

Compare this with the well-known \emph{substitution monoidal structure} on
$[\cat{F}, \cat{Set}]$---for $\cat{F}$ the category of functions
between finite cardinals---defined by:
\begin{equation}\label{eq:35}
  (A \otimes B)(m) = \textstyle\int^{n \in \cat{F}} An \times Bm^n\rlap{ .}
\end{equation}

\begin{Rk}
  \label{rk:4}
Given a monoidal category $\V$, one may consider categories
\emph{enriched} over $\V$ in the sense of~\cite{Kelly1982Basic}. A
$\V$-enriched category $\C$ involves a set of objects $A,B,C,\dots$ as usual, but
instead of hom-sets of morphisms, one has hom-\emph{objects} $\C(A,B)$
in $\V$. In the enriched context, one has a new kind of colimit
available, namely the \emph{copower} $V \cdot A$ of an object $A \in \C$ by an object $V \in \V$, characterised by natural isomorphisms $\C(V \cdot A, B) \cong [V, \C(A,B)]$ in $\V$.

In~\cite{Garner2014Lawvere}, the author considered categories enriched
over $[\F, \cat{Set}]$ with the substitution monoidal structure, and
showed that such $[\F, \cat{Set}]$-categories admitting copowers by
representables correspond to ordinary categories $\C$ admitting finite
powers $(\thg)^n \colon \C \rightarrow \C$. The analogy
between~\eqref{eq:31} and~\eqref{eq:35} suggests that something
similar should be possible with $[\uf, \cat{Set}]$ in place of
$[\F, \cat{Set}]$, and this is indeed so: categories enriched over
$[\uf, \cat{Set}]$ with copowers by representables correspond to
ordinary categories $\C$ equipped with \emph{abstract ultrapower}
functors $(\thg)^\U \colon \C \rightarrow \C$. The details of this
will be left for future work, but we discuss an extension to
categories endowed with \emph{abstract ultraproduct} functors in
Remark~\ref{rk:9} below.
\end{Rk}

\section{First generalisation}
\label{sec:gener-main-theor}

In this section, we give our first generalisation of
Theorem~\ref{thm:2}. This will show that, for any extensive category
$\C$, there is an equivalence
$\fc(\C, \cat{Set}) \simeq [\uf_\C, \cat{Set}]$ where $\uf_\C$ is a
suitably defined category of ultrafilters on $\C$-objects.
We then describe how this result captures the notion of ultraproduct,
and how it reconstructs the categorical treatment
in~\cite{Makkai1981The-topos} of types in model theory.

\subsection{Generalising the domain category}
\label{sec:gener-doma-categ}

We begin by recalling from~\cite{Carboni1993Introduction} that a
category $\C$ with finite coproducts is \emph{extensive} if for every
$A, B \in \C$, the functor
$ + \colon \C / A \times \C / B \rightarrow \C / (A+B)$ is an
equivalence of categories. Equally,
by~\cite[Proposition~2.2]{Carboni1993Introduction}, $\C$ is extensive
just when it has pullbacks along coproduct coprojections, and, for
every diagram
\begin{equation}\label{eq:14}
  \cd[@-0.5em]{
    A' \ar[d]_-{a} \ar[r]^-{i'} & C' \ar[d]_-{c} & \ar[l]_-{j'} B'
    \ar[d]_-{b} \\
    A \ar[r]^-{i} & C & B \ar[l]_-{j}
  }
\end{equation}
in which the bottom row is a coproduct diagram, the top row is a
coproduct diagram if and only if the two squares are pullbacks.

Note the ``if'' direction says that binary coproducts in $\C$ are
pullback-stable. In fact, a category with finite coproducts and
pullbacks along their coprojections is extensive just when binary
coproducts are pullback-stable and \emph{disjoint};
see~\cite[Proposition~2.14]{Carboni1993Introduction}. Disjointness
means that coproduct coprojections are monic, and the pullback of the
two coprojections of a binary coproduct is initial.

This characterisation implies that any topos is extensive;
see~\cite[Proposition~A2.3.4 \&
Corollary~A2.4.4]{Johnstone2002Sketches}. In particular, $\cat{Set}$
is extensive, as is any presheaf category. Other examples of extensive
categories include the categories of topological spaces, of small
categories and of affine schemes.

Let us write $\mathrm{Sum}_\C(X)$ for the poset of coproduct summands
of $X \in \C$: that is, the poset of isomorphism-classes of coproduct
coprojections with codomain $X$. 

\begin{Prop}
  \label{prop:5}
  If $\C$ is extensive, then for each $X \in \C$ the poset
  $\mathrm{Sum}_\C(X)$ is a Boolean algebra, and for each
  $f \colon X \rightarrow Y$, pullback along $f$ defines a Boolean
  algebra homomorphism
  $f^{-1} \colon \mathrm{Sum}_\C(Y) \rightarrow \mathrm{Sum}_\C(X)$.
\end{Prop}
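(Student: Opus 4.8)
The plan is to put the Boolean structure on $\mathrm{Sum}_\C(X)$ by reducing every finitary Boolean identity to a computation in a genuine powerset algebra, obtained as a common refinement of finitely many summands into pairwise disjoint ``atoms''. First I would record that $\mathrm{Sum}_\C(X)$ is a poset: since coproduct coprojections are monic by disjointness~\cite[Proposition~2.14]{Carboni1993Introduction}, a summand is determined by its image as a subobject, and I order summands by the subobject order, with top $\mathrm{id}_X$ and bottom the coprojection $0 \rightarrowtail X$. Each summand $A$ has a complement $A^c$ given by the decomposition $X \cong A + A^c$, unique up to isomorphism because the canonical comparison $\C/A \times \C/A^c \to \C/X$ is an equivalence.

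The core step is the refinement. Given summands $A_1, \dots, A_n \rightarrowtail X$, I would iterate pullback-stability of binary coproducts (the ``if'' direction of~\eqref{eq:14}) to produce a coproduct decomposition $X \cong \sum_{S \subseteq \{1,\dots,n\}} X_S$, where $X_S$ is the iterated pullback $\bigwedge_{i \in S} A_i \wedge \bigwedge_{j \notin S} A_j^c$; all these pullbacks exist since they are taken along coprojections. Discarding the $X_S$ that are initial, the surviving atoms are pairwise disjoint nonzero summands, and by disjointness a summand which is a coproduct of some of them is determined by exactly which atoms it contains—one recovers this set by pulling the summand back along each atom $X_{S} \rightarrowtail X$. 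Hence the assignment sending a set of nonzero atoms to the corresponding coproduct summand is an order-embedding of the powerset $\mathcal{P}(\{S : X_S \neq 0\})$ into $\mathrm{Sum}_\C(X)$, whose image is closed under pullback and under $A \mapsto A^c$ and contains each $A_i = \sum_{S \ni i} X_S$. On this image, meet (pullback), join (``union of atoms'') and complement agree with the set operations of the finite powerset algebra. As every Boolean identity involves only finitely many elements, each of which lies in such an embedded powerset algebra, it follows that $\mathrm{Sum}_\C(X)$ is a Boolean algebra, with meet given by pullback and complement by $A \mapsto A^c$.

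For the second assertion, given $f \colon X \rightarrow Y$ and a summand $B \rightarrowtail Y$ with $Y \cong B + B^c$, pulling the two coprojections back along $f$ is again a pullback along coprojections, hence exists, and by pullback-stability yields $X \cong f^{-1}B + f^{-1}B^c$; thus $f^{-1}$ carries summands to summands and preserves complements, while it preserves $\top$ and $\bot$ since $f^{-1}Y = X$ and $f^{-1}0 \cong 0$. Preservation of meets is the pasting law for pullbacks, and preservation of joins then follows from the De Morgan law $A \vee B = (A^c \wedge B^c)^c$ already available in $\mathrm{Sum}_\C(X)$ and $\mathrm{Sum}_\C(Y)$. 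Hence $f^{-1}$ is a Boolean algebra homomorphism.

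I expect the main obstacle to be the refinement step: making precise that the $X_S$ assemble into an honest $2^n$-fold coproduct, and that distinct unions of nonzero atoms give distinct summands, so that the finite powerset algebra genuinely embeds. All the Boolean-algebraic content—distributivity in particular, which does \emph{not} follow formally from the mere existence of (even unique) complements—is inherited from these finite subalgebras, so getting the refinement right is exactly what carries the proof.
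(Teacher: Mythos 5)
Your proposal is correct, and it rests on the same basic facts as the paper's proof: pullback-stability and composition-closure of coproduct coprojections give the finite meets and show that $f^{-1}$ is well defined and meet-preserving; the decomposition $X \cong A + A^c$ together with disjointness gives complements with $A \wedge A^c = \bot$; and $f^{-1}$ preserves complements because binary coproducts are pullback-stable. Where you genuinely diverge is in how the Boolean structure itself is certified. The paper's proof is terse: it records finite meets, $\top$, $\bot$ and complements and then simply asserts that $\mathrm{Sum}_\C(X)$ is a Boolean algebra, leaving the existence of joins and distributivity implicit (the fact is standard, going back to~\cite{Carboni1993Introduction}). You instead supply the missing justification by a common-refinement argument, decomposing $X$ into the $2^n$ ``atoms'' $X_S$ determined by finitely many summands and verifying every finitary Boolean identity inside the resulting embedded finite powerset algebras; you are quite right that distributivity does not come for free from the mere existence of complements, so this extra work (or something equivalent) is genuinely needed. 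A lighter route to the same end, presumably the one the paper has in mind, is the single identity $B = (B \wedge A) + (B \wedge A^c)$, obtained by pulling the decomposition $X \cong A + A^c$ back along $B \rightarrowtail X$ using the ``only if'' direction of~\eqref{eq:14}; from this one reads off directly that $(A^c \wedge B^c)^c$ is the join and that meets distribute over joins, without assembling the full atomic refinement. The one place you should add a sentence is in checking that the ``union of atoms'' is the join in all of $\mathrm{Sum}_\C(X)$ and not merely an upper bound within the embedded subalgebra; this does follow, since meet and complement in the subalgebra agree with the globally defined ones and the order is recoverable from the meet, but as stated it is the only step of the reduction that is not purely formal.
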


\begin{proof}
  Since binary coproducts in $\C$ are stable under pullback, so are
  coproduct coprojections; since they are also composition-closed,
  each $\mathrm{Sum}_\C(X)$ has finite meets, and
  $f^{-1} \colon \mathrm{Sum}_\C(Y) \rightarrow \mathrm{Sum}_\C(X)$ is
  well-defined and finite-meet-preserving.
  Now any $Y_1 \rightarrowtail Y$ in $\mathrm{Sum}_\C(X)$ is part of a
  coproduct $Y_1 \rightarrowtail Y \leftarrowtail Y_2$. Of course
  $Y_1 \cup Y_2 = \top_Y$ in $\mathrm{Sum}_\C(Y)$, and
  $Y_1 \cap Y_2 = \bot_Y$ by disjointness; so $\mathrm{Sum}_\C(X)$ has
  complements and so is a Boolean algebra. Further,
  $f^{-1} \colon \mathrm{Sum}_\C(Y) \rightarrow \mathrm{Sum}_\C(X)$
  preserves these complements as binary coproducts are
  pullback-stable.
\end{proof}

A case worth noting is that where $\C$ is \emph{Boolean extensive},
meaning that every monic in $\C$ is a coproduct coprojection; in this
situation $\mathrm{Sum}_\C(X)$ coincides with the full subobject
lattice $\mathrm{Sub}_\C(X)$, so that all subobject lattices in $\C$
are Boolean algebras---whence the nomenclature. In particular, the
category of sets is Boolean extensive, so that the following result is
a generalisation of Lemma~\ref{lem:8} above. In the last part of the
statement, a natural transformation $\alpha$ is called
\emph{sum-cartesian} if its naturality square at every coproduct
coprojection is a pullback.

\begin{Lemma}
  \label{lem:2}
  Let $\C$ and $\D$ be extensive, let $G \colon \C \rightarrow \D$ be
  finite-coproduct-preserving and let $\alpha \colon F \Rightarrow G
  \colon \C \rightarrow \D$.
  \begin{enumerate}[(i)]
  \item $G$ preserves both coproduct coprojections and pullbacks along
    such;
  \item $F$ preserves finite coproducts just when $\alpha \colon F
    \Rightarrow G$ is sum-cartesian.
  \end{enumerate}
\end{Lemma}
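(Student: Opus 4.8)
The plan is to reduce everything to the characterisation of extensivity recorded in~\eqref{eq:14}, applied inside $\C$ for part~(i) and inside $\D$ for both implications of part~(ii); the only genuinely separate ingredient will be the strictness of the initial object in an extensive category, needed to handle the empty coproduct.

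For part~(i), I would first observe that any coproduct coprojection $i \colon A \rightarrow C$ sits inside a coproduct diagram $A \rightarrow C \leftarrow B$, so that $Gi$ is a coprojection of the coproduct $GA \rightarrow GC \leftarrow GB$ produced by $G$; hence $G$ preserves coproduct coprojections. To see that $G$ preserves pullbacks along such, I would fix an arbitrary $f \colon X \rightarrow C$ and use extensivity of $\C$ to present $X$ as $f^{-1}(A) + f^{-1}(B)$. This exhibits the two pullback squares of $i, j$ along $f$ as the squares of a diagram of the shape~\eqref{eq:14} whose top and bottom rows are both coproducts. Applying $G$ keeps both rows coproducts, so the ``only if'' direction of~\eqref{eq:14} in $\D$ forces the two squares to remain pullbacks; in particular $G$ preserves the pullback of $i$ along $f$.

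For part~(ii), the binary case of both implications is a single diagram comparison. I would set the naturality squares of $\alpha$ at the two coprojections $i \colon A \rightarrow C$ and $j \colon B \rightarrow C$ side by side to form a diagram of shape~\eqref{eq:14}, with bottom row $GA \rightarrow GC \leftarrow GB$ and top row $FA \rightarrow FC \leftarrow FB$; the bottom row is always a coproduct since $G$ preserves coproducts. If $\alpha$ is sum-cartesian the two squares are pullbacks, so the ``if'' direction of~\eqref{eq:14} makes the top row a coproduct, i.e.\ $F$ preserves this binary coproduct; conversely, if $F$ preserves coproducts then both rows are coproducts, and the ``only if'' direction makes the squares pullbacks, i.e.\ $\alpha$ is sum-cartesian at $i$ and $j$.

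It remains to treat the nullary coproduct, which is the step I expect to require the most care, since it falls outside the pattern above. The unique map $\emptyset \rightarrow C$ is the coprojection of the coproduct $\emptyset + C = C$, so sum-cartesianness asserts the naturality square of $\alpha$ there is a pullback; since $G\emptyset = \emptyset$ and the initial object of an extensive category is strict, that pullback is itself initial, whence $F\emptyset \cong \emptyset$ and $F$ preserves the empty coproduct. Running this in reverse: if $F$ preserves $\emptyset$ then $F\emptyset = \emptyset = G\emptyset$, and strictness again identifies the naturality square at $\emptyset \rightarrow C$ as a pullback. The main obstacle is therefore not any hard computation but organising the bookkeeping so that~\eqref{eq:14} does all the work in $\C$ and $\D$, with strictness of the initial object supplying the one case the characterisation of binary coproducts cannot reach.
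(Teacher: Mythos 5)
Your argument is correct and follows essentially the same route as the paper's: both parts reduce to the two directions of the characterisation~\eqref{eq:14} of extensivity, applied in $\C$ to set up the diagrams and in $\D$ after applying the functors. Your separate treatment of the empty coproduct via strictness of the initial object is sound but not strictly necessary, since the binary argument applied to the decomposition $C \cong 0 + C$ (together with disjointness of $F0 + F0 \cong F0$ in the extensive $\D$) already covers that case.
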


\begin{proof}
  The first part of (i) is clear. For the second, any pullback
  along a coproduct coprojection in $\C$ is the left square of a
  diagram like~\eqref{eq:14} in which both rows are coproducts.
  Applying $F$, both rows remain coproducts and so by extensivity of
  $\D$, both squares remain pullbacks. As for (ii), given a coproduct diagram
  $i \colon A \rightarrow C \leftarrow B \colon j$ in $\C$, we
  consider the diagram
  \begin{equation*}
    \cd[@-0.5em]{
      FA \ar[d]_-{\alpha_A} \ar[r]^-{Fi} & FC \ar[d]_-{\alpha_C} & \ar[l]_-{Fj} FB
      \ar[d]_-{\alpha_B} \\
      GA \ar[r]^-{Gi} & GC & GB \ar[l]_-{Gj}
    }
  \end{equation*}
  in $\D$. The bottom row is a coproduct since $G$ preserves such;
  so, by extensivity of $\D$, the top row is a coproduct (i.e.,
  $F$ preserves finite coproducts) just when both squares are
  pullbacks (i.e., $\alpha$ is sum-cartesian).
\end{proof}

If $\C$ is extensive, then we define an \emph{ultrafilter} on
$X \in \C$ to be a Boolean algebra homomorphism
$\mathrm{Sum}_\C(X) \rightarrow 2$; equivalently, a subset
$\U \subseteq \mathrm{Sum}_\C(X)$ satisfying the analogue of
conditions (i) and (ii) or (i) and (ii)$'$ of Definition~\ref{def:1}.
Like before, we write $\beta X$ for the set of ultrafilters on
$X \in \C$. Since each
$f^{-1} \colon \mathrm{Sum}_\C(Y) \rightarrow \mathrm{Sum}_\C(X)$ is a
Boolean algebra homomorphism, precomposition with $f^{-1}$ yields a
function $\beta f \colon \beta X \rightarrow \beta Y$; in this way, we
define an ultrafilter functor $\beta \colon \C \rightarrow \cat{Set}$.

\begin{Prop}
  \label{prop:1}
  $\beta \colon \C \rightarrow \cat{Set}$ is terminal in
  $\fc(\C, \cat{Set})$.
\end{Prop}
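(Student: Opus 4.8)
The plan is to directly generalise the proof of Theorem~\ref{thm:3} (Börger's theorem), which handled the case $\C = \cat{Set}$, now that Proposition~\ref{prop:5} supplies the Boolean algebra of summands $\mathrm{Sum}_\C(X)$ in place of the powerset $\P X$, and Lemma~\ref{lem:2} provides the extensive-category analogue of Lemma~\ref{lem:8}. First I would verify $\beta \in \fc(\C, \cat{Set})$: clearly $\beta(\emptyset)$ is a one-element set, since the only Boolean algebra homomorphism out of the trivial (one-element) Boolean algebra $\mathrm{Sum}_\C(\emptyset)$ lands in $2$. For a binary coproduct $y_1 \colon Y_1 \to Y \leftarrow Y_2 \colon y_2$, I would show each $\beta y_i$ is injective with image $\{\U \in \beta Y : y_i \in \U\}$, where I abuse notation to write $y_i$ for the summand it determines; since $y_1$ and $y_2$ are complementary elements of the Boolean algebra $\mathrm{Sum}_\C(Y)$, each ultrafilter $\U$ on $Y$ contains exactly one of them, so $(\beta y_1, \beta y_2)$ is a coproduct cone in $\cat{Set}$.

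Next I would establish terminality. Given $T \in \fc(\C, \cat{Set})$ and $x \in TX$, I would define the \emph{type} of $x$ by
\begin{equation*}
  \tau_X(x) = \{\, U \in \mathrm{Sum}_\C(X) : x \text{ lies in the image of } TU \rightarrowtail TX \,\}
\end{equation*}
where $TU \rightarrowtail TX$ is the $T$-image of the coprojection $U \rightarrowtail X$, which is monic by Lemma~\ref{lem:2}(i). That $\tau_X(x)$ is an ultrafilter on $X$ follows exactly as before: axiom (i) from the fact that $T$ preserves pullbacks of coproduct coprojections (Lemma~\ref{lem:2}(i) again), and axiom (ii)$'$ from the fact that a summand $U$ and its complement $X \setminus U$ form a coproduct diagram $TU \rightarrowtail TX \leftarrowtail T(X \setminus U)$ whose image under $T$ is again a coproduct. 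Naturality of $\tau \colon T \Rightarrow \beta$ in $X$ reduces to the statement that $Tf(x)$ factors through $TV \rightarrowtail TY$ iff $x$ factors through $T(f^{-1}V) \rightarrowtail TX$, which holds because $T$ preserves the pullback of the coprojection $V \rightarrowtail Y$ along $f$, using that $f^{-1} \colon \mathrm{Sum}_\C(Y) \to \mathrm{Sum}_\C(X)$ from Proposition~\ref{prop:5} computes this summand.

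Finally I would verify uniqueness. Any $\sigma \colon T \Rightarrow \beta$ is sum-cartesian by Lemma~\ref{lem:2}(ii) (since $T$ preserves finite coproducts), so its naturality square at each coprojection $m \colon U \rightarrowtail X$ is a pullback; hence $x$ factors through $Tm$ iff $\sigma_X(x)$ factors through $\beta m$, which by the image computation in the first paragraph happens exactly when $U \in \sigma_X(x)$, forcing $\sigma_X(x) = \tau_X(x)$. The main obstacle, and the only place one must be careful, is that in the extensive setting $\mathrm{Sum}_\C(X)$ is in general strictly smaller than the full subobject lattice, so every step that in Börger's proof used arbitrary subsets must be checked to use only coproduct summands and pullbacks along coprojections; Proposition~\ref{prop:5} and Lemma~\ref{lem:2} are precisely tailored so that each such step goes through, so I expect the argument to be a faithful and essentially routine transcription of the $\cat{Set}$ case rather than to require a genuinely new idea.
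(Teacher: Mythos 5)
Your proposal follows exactly the same route as the paper: the paper's own proof of this proposition simply says that the argument of Theorem~\ref{thm:3} adapts, with Lemma~\ref{lem:2} replacing Lemma~\ref{lem:8} and $\mathrm{Sum}_\C(X)$ (via Proposition~\ref{prop:5}) replacing $\P X$, and your write-up is a faithful expansion of that transcription. The verification of the binary coproduct case, the definition of the type $\tau_X(x)$, the naturality argument via preservation of pullbacks along coprojections, and the uniqueness argument via sum-cartesianness all match the intended proof.

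There is one slip to correct: you assert that $\beta(\emptyset)$ is a one-element set, on the grounds that there is a unique Boolean algebra homomorphism out of the trivial algebra $\mathrm{Sum}_\C(\emptyset)$. In fact there are \emph{no} such homomorphisms: in the trivial Boolean algebra one has $\top = \bot$, and a homomorphism to $2$ would have to send this element to both $\top$ and $\bot$ of $2$, which is impossible. Hence $\beta(\emptyset) = \emptyset$, which is what is actually needed for $\beta$ to preserve the empty coproduct; as stated, your claim would contradict the very membership $\beta \in \fc(\C, \cat{Set})$ you are trying to establish. With that one-line repair the argument is correct and is essentially the paper's.
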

\begin{proof}
  The proof in Theorem~\ref{thm:3} adapts without difficulty to show
  that $\beta \colon \C \rightarrow \cat{Set}$ preserves
  finite coproducts. To show terminality in
  $\fc(\C, \cat{Set})$, we suppose given
  $T \in \fc(\C, \cat{Set})$. For any $X \in \C$ and 
  $x \in TX$, we again define the \emph{type} of $x$ to be:
  \begin{equation*}
    \tau_X(x) = \{ U \xrightarrowtail{m} X \in \mathrm{Sum}_\C(X)
    : \text{$x$ factors through } FU \xrightarrowtail{Fm} FX\}\rlap{ .}
  \end{equation*}
  The same argument as before, but now exploiting Lemma~\ref{lem:2} in
  place of Lemma~\ref{lem:8}, shows that this definition gives
  the values of a well-defined natural transformation 
  $\tau \colon T \Rightarrow \beta \colon \C \rightarrow \cat{Set}$,
  and that this $\tau$ is unique.
\end{proof}

Like before, given objects $X,Y \in \C$ endowed with ultrafilters $\U$
and $\V$, we call $f \colon X \rightarrow Y$ \emph{continuous}
if for all $V \in \mathrm{Sum}_\C(Y)$ we have
$V \in \V \Leftrightarrow f^{-1}(V) \in \U$. More generally, a map
$f \colon U \rightarrow Y$ defined on the domain of some
$U \rightarrowtail X$ in $\U$ is \emph{continuous} if
$V \rightarrowtail Y \in \V$ just when
$f^{-1}(V) \rightarrowtail U \rightarrowtail X \in \U$. Two
partial maps defined on $U$ and $U'$ are \emph{$=_\U$-equivalent} if
their restrictions to some $W \subseteq U \cap U'$ in $\U$ coincide.

\begin{Defn}
  \label{def:7}
  The category $\ue_\C$ has pairs $(X \in \C, \U \in \beta X)$ as
  objects, and as morphisms $(X, \U) \rightarrow (Y, \V)$ the
  continuous maps $X \rightarrow Y$. The category $\uf_\C$ has the
  same objects, and as morphisms $(X,\U) \rightarrow (Y, \V)$ the
  $=_\U$-equivalence classes of partial continuous maps
  $X \rightharpoonup Y$ defined on the domain of some
  $U \rightarrowtail X$ in $\U$.
\end{Defn}
Writing $\M_\Sigma$ for the maps in $\ue_\C$ whose underlying map in
$\C$ is a coproduct coprojection, we have as in
Proposition~\ref{prop:2}, that $\uf_\C \cong \ue_\C[\M_\Sigma^{-1}]$.
Now transcribing the proof of Theorem~\ref{thm:2} and
Corollary~\ref{cor:4}, but exploiting Proposition~\ref{prop:1} and
Lemma~\ref{lem:2} in place of Theorem~\ref{thm:3} and
Lemma~\ref{lem:8}, gives the following.

\begin{Thm}
  \label{thm:6}
  Let $\C$ be extensive. The category $\fc(\C, \cat{Set})$ of finite
  coproduct-preserving functors $\C \rightarrow \cat{Set}$ is
  equivalent to $[\uf_\C, \cat{Set}]$. Under this equivalence, the
  representable presheaf at $(X,\U) \in \uf_\C$ corresponds to the
  ``ultrahom functor''
  \begin{equation*}
    \C(X,\thg)_\U = \mathrm{colim}_{U \in \U} \C(U, \thg) \colon \C
    \rightarrow \cat{Set}\rlap{ .}
  \end{equation*}
\end{Thm}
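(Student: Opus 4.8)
The plan is to follow exactly the template established in the proofs of Theorem~\ref{thm:2} and Corollary~\ref{cor:4}, now substituting the extensive-category machinery for its $\cat{Set}$-specific counterparts. The statement has already been set up so that each ingredient has a known generalisation: Proposition~\ref{prop:1} replaces B\"orger's Theorem~\ref{thm:3} in identifying $\beta \colon \C \to \cat{Set}$ as terminal in $\fc(\C, \cat{Set})$; Lemma~\ref{lem:2} replaces Lemma~\ref{lem:8}; and the remark immediately preceding the statement has already recorded the analogue $\uf_\C \cong \ue_\C[\M_\Sigma^{-1}]$ of Proposition~\ref{prop:2}. So the proof is genuinely a matter of transcription.

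First I would establish the isomorphism of categories
\[
  \fc(\C, \cat{Set}) \cong [\C, \cat{Set}] \smc \beta\rlap{ ,}
\]
exactly as at~\eqref{eq:12}: an object $T \in [\C, \cat{Set}]$ preserves finite coproducts if and only if it admits a sum-cartesian transformation $T \Rightarrow \beta$, which by Proposition~\ref{prop:1} is then necessarily unique; the ``if'' is the content of Lemma~\ref{lem:2}(ii), the ``only if'' and uniqueness follow from terminality. Next I would invoke the slice-of-presheaves equivalence~\eqref{eq:10} to identify $[\C, \cat{Set}]/\beta \simeq [\el \beta, \cat{Set}]$, observe that $\el \beta \cong \ue_\C$ (objects are pairs $(X, \U)$ and morphisms are precisely the continuous maps, by comparing the definition of $f^{-1}$ on $\mathrm{Sum}_\C$ with the continuity condition), and then cut down to the sum-cartesian arrows. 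These correspond on the presheaf side to functors inverting the class $\M_\Sigma$ of coprojection-morphisms in $\ue_\C$; by the localisation $\uf_\C \cong \ue_\C[\M_\Sigma^{-1}]$ and restriction along $\ue_\C \to \uf_\C$, this subcategory is isomorphic to $[\uf_\C, \cat{Set}]$, yielding the desired equivalence. For the identification of representables, I would chase the equivalence as in the passage following Theorem~\ref{thm:2}: the representable at $(X, \U)$ corresponds to $\int\!y_{(X,\U)}$, and computing its value at $Y$ as $\sum_{\V} \uf_\C((X,\U),(Y,\V))$ and using~\eqref{eq:2}-style colimit formula gives precisely $\mathrm{colim}_{U \in \U} \C(U, \thg)$.

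The main obstacle is that one should not merely assert ``the proof transcribes'' but verify that each $\cat{Set}$-specific step actually survives the passage to a general extensive codomain-of-summands structure. The delicate points are: that the slice equivalence~\eqref{eq:10} and the characterisation of monocartesian/sum-cartesian transformations in terms of fibrewise isomorphisms over $\beta$ still match up correctly; that the localisation result $\uf_\C \cong \ue_\C[\M_\Sigma^{-1}]$ really goes through with $\M_\Sigma$ (coprojections) playing the role of $\M$ (injections), which requires $\M_\Sigma$ to admit a calculus of right fractions --- this uses that coproduct coprojections are monic and pullback-stable, both guaranteed by extensivity via Proposition~\ref{prop:5}; and that the fibres $\tau_X^{-1}(\U) \cong \bigcap_{U \in \U} \im TU$ behave as in~\eqref{eq:17} so that the sum-cartesian condition translates into $\M_\Sigma$-inversion. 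Since all of these facts have been isolated into the cited lemmas and propositions, the genuinely new content is small; the risk is only in overlooking where the original argument implicitly used that \emph{every} monomorphism of $\cat{Set}$ is a coprojection (i.e.\ that $\cat{Set}$ is Boolean extensive), and being careful to phrase each step in terms of coproduct summands rather than arbitrary subobjects.
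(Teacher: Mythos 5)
Your proposal is correct and follows essentially the same route as the paper, whose own proof of Theorem~\ref{thm:6} consists precisely of the instruction to transcribe the proofs of Theorem~\ref{thm:2} and Corollary~\ref{cor:4} with Proposition~\ref{prop:1}, Lemma~\ref{lem:2} and the localisation $\uf_\C \cong \ue_\C[\M_\Sigma^{-1}]$ substituted for their $\cat{Set}$-specific counterparts. Your explicit attention to the points where the original argument implicitly used that $\cat{Set}$ is Boolean extensive is exactly the right diligence, and all of those points do go through as you indicate.
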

The formulae for the two directions of the equivalence
$\fc(\C, \cat{Set}) \simeq [\uf_\C, \cat{Set}]$ are once again given
by~\eqref{eq:17} and~\eqref{eq:13}. 

\begin{Ex}
  \label{ex:2}
  The category $\cat{Stone}$ of Stone spaces is extensive and
  $\mathrm{Sum}_{\cat{Stone}}(X)$ is the Boolean algebra of clopen
  sets of $X$. It follows by Stone duality that ultrafilters on
  $X \in \cat{Stone}$ correspond exactly to points of $X$, so that
  the category $\uf_{\cat{Stone}}$ has pointed Stone spaces $(X,x)$ as
  objects, and as maps $f \colon (X,x) \rightarrow (Y,y)$, germs at
  $x$ of point-preserving continuous functions $X \rightarrow Y$.
  Under the equivalence
  $[\uf_\cat{Stone}, \cat{Set}] \simeq \twocat{FC}(\cat{Stone},
  \cat{Set})$, the representable at $(X,x)$ corresponds to the functor
  which sends a Stone space $Y$ to the stalk at $x$ of the sheaf of
  continuous functions $X \rightarrow Y$.
\end{Ex}

\subsection{Relation to ultraproducts}
\label{sec:relat-ultraprod}
We now explain how Theorem~\ref{thm:6} allows us to reconstruct the
notion of ultraproduct. Taking $\C = \cat{Set}^X$ therein yields the
equivalence
$[\uf_{\cat{Set}^X}, \cat{Set}] \simeq \fc(\cat{Set}^X, \cat{Set})$,
and we will obtain the ultraproduct functors as correlates to the
right of suitable \emph{representable} functors to the left.

Note first that,
via~\eqref{eq:26}, we have for any $A \in \cat{Set}^X$ that
\begin{equation*}
  \mathrm{Sum}_{\cat{Set}^X}(A) \cong \mathrm{Sum}_{\cat{Set}/X}(\pi_A
  \colon X.A \rightarrow X) \cong \P(X.A)\rlap{ ,}
\end{equation*}
so that ultrafilters on $A \in \cat{Set}^X$ can be identified with
ultrafilters on $X.A$ in $\cat{Set}$. Under this identification, the
ultrafilter $\U$ on $X.A$ corresponds to the ultrafilter
$\tilde \U = \{\tilde U : U \in \U\}$ on $A$ composed of the subobjects
$\tilde U \rightarrowtail A$ obtained by passing the subobjects
$U \rightarrowtail X.A \rightarrow X$ of
$\pi_A \colon X.A \rightarrow X$ across the equivalence~\eqref{eq:26}.

\begin{Prop}
  \label{prop:6}
  Under the equivalence
  $[\uf_{\cat{Set}^X}, \cat{Set}] \simeq \fc(\cat{Set}^X, \cat{Set})$,
  the representable functor at
  $(A \in \cat{Set}^X, \U \in \beta(X.A))$ corresponds to the composite
  \begin{equation}\label{eq:33}
    \cat{Set}^X \xrightarrow{\ \ \textstyle\cat{Set}^{\pi_A}\ \ } \cat{Set}^{X.A}
    \xrightarrow{\ \ \textstyle\Pi_\U\ \ } \cat{Set}\rlap{ .}
  \end{equation}
  In particular, the representable at $(1, \U)$ corresponds to
  $\Pi_\U \colon \cat{Set}^X \rightarrow \cat{Set}$.
\end{Prop}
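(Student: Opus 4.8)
The plan is to start from the explicit description of representables furnished by Theorem~\ref{thm:6} and translate it through the identifications already set up for $\C = \cat{Set}^X$. By that theorem, the representable at $(A, \U) \in \uf_{\cat{Set}^X}$ corresponds to the ultrahom functor $\C(A, \thg)_\U = \colim_{U \in \U} \cat{Set}^X(U, \thg)$, where $U$ ranges over those summands of $A$ lying in $\U$. Under the identification $\mathrm{Sum}_{\cat{Set}^X}(A) \cong \P(X.A)$ recalled above, these summands are exactly the subsets $U \subseteq X.A$ belonging to $\U$, with $U$ regarded as the subfamily of $A$ whose display is the restriction of $\pi_A$ to $U$.

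The crux is then a single natural identification. For a summand $U \rightarrowtail A$ and a family $B \in \cat{Set}^X$, a morphism $U \to B$ in $\cat{Set}^X$ is a family of functions $(U(x) \to B(x))_{x \in X}$, equivalently an assignment sending each $(x,a) \in U \subseteq X.A$ to an element of $B(x)$. But $B(x) = B(\pi_A(x,a)) = (\cat{Set}^{\pi_A}B)(x,a)$, so this is precisely a section over $U$ of the display family of $\cat{Set}^{\pi_A}B \in \cat{Set}^{X.A}$; that is, $\cat{Set}^X(U, B) \cong \npr[(x,a)]{U}(\cat{Set}^{\pi_A}B)(x,a)$. This bijection is manifestly natural in $B$, and compatible with the restriction maps induced by inclusions $U' \subseteq U$ of summands, so it respects the filtered-colimit structure on both sides.

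Taking the colimit over $U \in \U$ and comparing with the formula~\eqref{eq:2} for the ultraproduct then yields
\[
  \C(A, \thg)_\U(B) = \colim_{U \in \U} \cat{Set}^X(U, B) \cong \colim_{U \in \U} \npr[(x,a)]{U}(\cat{Set}^{\pi_A}B)(x,a) = \Pi_\U(\cat{Set}^{\pi_A} B)\rlap{ ,}
\]
naturally in $B$, which is exactly the asserted correspondence with the composite~\eqref{eq:33}. The final sentence is then the special case $A = 1$: here $X.1 \cong X$ and $\pi_1$ is (isomorphic to) $\id_X$, so $\cat{Set}^{\pi_1}$ is the identity and the composite reduces to $\Pi_\U \colon \cat{Set}^X \to \cat{Set}$.

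I do not anticipate a genuine obstacle, since the argument is a transcription through established equivalences rather than a new construction; the only point demanding care is bookkeeping --- verifying that the level-wise bijection $\cat{Set}^X(U,B) \cong \npr[(x,a)]{U}(\cat{Set}^{\pi_A}B)(x,a)$ is natural simultaneously in $B$ (so that it assembles into an isomorphism of functors of $B$) and in $U$ (so that the two filtered colimits over $\U$, with their germ-of-section transition maps, are genuinely identified), together with the passage $\mathrm{Sum}_{\cat{Set}^X}(A)\cong\P(X.A)$ from~\eqref{eq:26} that turns $\U\in\beta A$ into $\U\in\beta(X.A)$.
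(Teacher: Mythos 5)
Your proposal is correct and follows essentially the same route as the paper: both start from the ultrahom description $\colim_{U\in\U}\cat{Set}^X(\tilde U,\thg)$ furnished by Theorem~\ref{thm:6}, identify each hom-set with the set of sections of $\cat{Set}^{\pi_A}B$ over $U\subseteq X.A$ (the paper does this via the slice-category adjunction $\Sigma_{\pi_A}\dashv\pi_A^\ast$, you do it by unwinding elements, which is the same isomorphism), and then invoke the colimit formula~\eqref{eq:2} for the ultraproduct. The bookkeeping points you flag are exactly the ones the paper handles implicitly, and your treatment of the special case $A=1$ is fine.
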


\begin{proof}
  From Theorem~\ref{thm:6} and the above remarks, we know that
  $y_{(A, \U)}$ corresponds to the ultrahom functor $\cat{Set}^X(A,
  \thg)_{\tilde \U}$. We now calculate that:
  \begin{align*}
    \cat{Set}^X(A, Y)_{\tilde \U} &=  \mathop{\mathrm{colim}}\nolimits_{\tilde U \in \tilde
      \U}
    \cat{Set}^X(\smash{\tilde U}, Y) \\
    &\cong  \mathop{\mathrm{colim}}\nolimits_{U \in \U} \cat{Set}/X(\,U
    \rightarrowtail X.A \xrightarrow{\pi_A} X,\, X.Y \xrightarrow{\pi_Y} X\,) \\
    & \cong \mathop{\mathrm{colim}}\nolimits_{U \in \U} \cat{Set}/X.A(\,U
    \rightarrowtail X.A,\, \pi_A^\ast(X.Y) \xrightarrow{\smash{{\pi_A}^\ast\pi_Y}} X.A\,) \\
    & \cong \mathop{\mathrm{colim}}\nolimits_{U \in \U} \npr U
    Y(\pi_A(x)) =
    \upr {X.A} Y(\pi_A(x))\rlap{ ,}
  \end{align*}
  so that $y_{(A, \U)}$ corresponds to the composite~\eqref{eq:33} as desired.
\end{proof}

\subsection{Relation to model theory}
\label{sec:relat-model-theory}
Finally in this section, we relate Theorem~\ref{thm:6} to \emph{types}
in model theory. Recall that, if $\mathbb{T}$ is a (classical,
single-sorted) first-order theory, and we write
$\mathbb{T}[x_1, \dots, x_n]$ for the theory obtained by adjoining new
constants $x_1, \dots, x_n$ to $\mathbb{T}$, then a \emph{complete
  $n$-type} of $\mathbb{T}$ is a complete theory extending
$\mathbb{T}[x_1, \dots, x_n]$; equally, it is an ultrafilter on the
Lindenbaum--Tarski algebra $B_{\mathbb{T}[x_1, \dots, x_n]}$ of
sentences in $\mathbb{T}[x_1, \dots, x_n]$ identified up to provable
equivalence.

Each $n$-tuple of elements $\vec a = a_1, \dots, a_n$ in a
$\mathbb{T}$-model $A$ yields a complete $n$-type $\tau(\vec a)$,
namely the set of all sentences $\varphi(x_1, \dots, x_n)$ of
$\mathbb{T}[x_1, \dots, x_n]$ such that
$A \vDash \varphi(a_1, \dots, a_n)$; we say that the $n$-tuple
$\vec a$ \emph{realises} the type $\tau(\vec a)$. Many important
questions in model theory revolve around the existence, or otherwise,
of tuples of elements which realise a given type. As we now show, we
can obtain the realisation relation between $n$-types of $\mathbb{T}$
and $n$-tuples in a $\mathbb{T}$-model by applying Theorem~\ref{thm:6}
with $\C$ taken to be the \emph{classifying Boolean pretopos} of
$\mathbb{T}$.

The classifying Boolean pretopos of a first-order theory has a
characterising property analogous to that of the Lindenbaum--Tarski
algebra $B_\mathbb{P}$ of a (classical) propositional theory
$\mathbb{P}$. Indeed, there is a standard notion of \emph{model} of a
propositional theory $\mathbb{P}$ in a Boolean algebra $C$, and the
Lindenbaum--Tarski algebra $B_\mathbb{P}$ of all
$\mathbb{P}$-sentences modulo $\mathbb{P}$-provable equality, is the
\emph{universal} Boolean algebra which models $\mathbb{P}$. By this,
we mean that models of $\mathbb{P}$ in a Boolean algebra $C$ are in
bijection with Boolean homomorphisms $B_\mathbb{P} \rightarrow C$.

The classifying Boolean pretopos of a first-order theory $\mathbb{T}$
has a similar property: it is the universal Boolean pretopos which
models $\mathbb{T}$. We now make this precise. First, a
\emph{pretopos} is a category which is finitely complete, extensive
and also \emph{Barr-exact}~\cite{Barr1971Exact}, meaning that it has
well-behaved quotients of equivalence relations; while a pretopos is
\emph{Boolean} if it is so \emph{qua} extensive category. If $\C$ and
$\D$ are pretoposes, then a \emph{pretopos morphism}
$F \colon \C \rightarrow \D$ is a functor preserving finite limits,
finite coproducts and regular epimorphisms; we write
$\twocat{Pretop}(\C, \D)$ for the category of pretopos morphisms and
all natural transformations.

There is a standard notion of \emph{model} of a first-order theory
$\mathbb{T}$ in a Boolean pretopos $\C$---see, for example~\cite[\sec
D1.2]{Johnstone2002Sketches2}---and these comprise the objects of a
category $\mathbb{T}\text-\cat{Mod}_e(\C)$ whose maps are elementary
embeddings. If $\C$ and $\D$ are Boolean pretoposes, then any pretopos
morphism $F \colon \C \rightarrow \D$ preserves $\mathbb{T}$-models
and so induces a functor
$F_\ast \colon \mathbb{T}\text-\cat{Mod}_e(\C) \rightarrow
\mathbb{T}\text-\cat{Mod}_e(\D)$.

\begin{Defn}
  \label{def:21}
  A \emph{classifying Boolean pretopos} for a first-order theory $\mathbb{T}$
  is a Boolean pretopos $\cl$ endowed with a
  $\mathbb{T}$-model
  $\mathbf{G} \in \mathbb{T}\text-\cat{Mod}_e(\cl)$ such
  that, for any Boolean pretopos $\D$, the following functor
  is an equivalence:
  \begin{equation}\label{eq:43}
      \begin{aligned}
     \twocat{Pretop}(\cl, \D) & \rightarrow
     \mathbb{T}\text-\cat{Mod}_e(\D) \\
     F & \mapsto F_*(\mathbf G)\rlap{ .}
   \end{aligned}
 \end{equation}
\end{Defn}

To construct the classifying Boolean pretopos of a first-order theory
$\mathbb{T}$, we first form its \emph{first-order syntactic category}
$\C_\mathbb{T}^\mathrm{fo}$ whose objects are ``formal
$\mathbb{T}$-definable sets'' $\{ \vec x : \varphi(\vec x)\}$ (i.e.,
first-order formulae-in-context) and whose maps are
$\mathbb{T}$-provable equivalence classes of $\mathbb{T}$-provably
functional relations from $\{\vec x : \varphi(\vec x)\}$ to
$\{\vec y : \psi(\vec y)\}$; see, for example~\cite[\sec
D1.4]{Johnstone2002Sketches2}. The classifying Boolean pretopos $\cl$
is now obtained by freely adjoining finite coproducts and coequalisers
of equivalence relations to $\C_\mathbb{T}^\mathrm{fo}$ while
preserving its existing finite unions and image
factorisations---see~\cite[Proposition~A1.4.5~\&~Corollary~A3.3.10]{Johnstone2002Sketches}
for the necessary constructions. Alternatively, we can obtain $\cl$ as
the first-order syntactic category of $\mathbb{T}^\mathsf{eq}$, where
$(\thg)^\mathsf{eq}$ is Shelah's \emph{elimination of imaginaries};
this observation is due to Makkai and Reyes~\cite{Makkai1977First},
and is explained in detail in~\cite{Harnik2011Model}.

We will not describe the generic $\mathbb{T}$-model $\mathbf{G}$ in
$\cl$ explicitly; however, part of its genericity is the fact that
$\mathrm{Sum}_{\cl}(G)$ is the Lindenbaum--Tarski algebra
$B_{\mathbb{T}[x]}$. More generally $\mathrm{Sub}_{\cl}(G^n)$ is the
corresponding Lindenbaum--Tarski algebra
$B_{\mathbb{T}[x_1, \dots, x_n]}$. It follows that an ultrafilter on
$G^n \in \cl$ is exactly a complete $n$-type of $\mathbb{T}$. Now,
since $\D = \cat{Set}$ is a Boolean pretopos, we obtain
from~\eqref{eq:43} and Theorem~\ref{thm:6} a string of functors
\begin{equation*}
  \mathbb{T}\text-\cat{Mod}_e(\cat{Set}) \xrightarrow{\simeq}
  \twocat{Pretop}(\cl, \cat{Set}) \xrightarrow{\subseteq}
  \fc(\cl, \cat{Set}) \xrightarrow{\simeq}
  [\uf_{\cl}, \cat{Set}]
\end{equation*}
assigning to each (ordinary) $\mathbb{T}$-model $\mathbf M$ 
both a functor $M \colon \cl \rightarrow \cat{Set}$ and a
functor $\tilde M \colon \uf_{\cl} \rightarrow \cat{Set}$.
In this context, the passage from $M$ to $\tilde M$ was described
by Makkai in~\cite{Makkai1981The-topos}, who also observed its
model-theoretic import: it encodes the types realised by tuples
of elements of the model $\mathbf M$.
 
Indeed, the pretopos morphsim
$M \colon \cl \rightarrow \cat{Set}$
corresponding to the model $\mathbf M$ sends $G$ to the underlying set
$\abs {\mathbf M}$ of the model, sends $G^n$ to ${\abs {\mathbf M}}^n$
and sends $\varphi \in \mathrm{Sum}_{\cl}(G^n)$ to the set
$\{\,\vec m \in {\abs {\mathbf M}}^n : \mathbf M \vDash \varphi(\vec
m)\,\}$. Thus, by~\eqref{eq:17}, the value of the corresponding
$\tilde M \in [\uf_{\cl}, \cat{Set}]$ at a complete
$n$-type $\U$ is given by the set of $n$-tuples of elements of
$\mathbf M$ which realise the type $\U$:
\begin{equation*}
  \tilde M(X^n, \U) = \{ \vec m \in \abs{\mathbf M}^n : \varphi \in \U\,
  \Longleftrightarrow \,\mathbf M \vDash \varphi(\vec m)\}\rlap{ .}
\end{equation*}

\section{Second generalisation}
\label{sec:second-gener}
In this section, we give our second generalisation of
Theorem~\ref{thm:2}, which extends the first one to an equivalence
$\fc(\C, \E) \simeq [\uf_\C, \E]$, where $\C$ is extensive as before,
and now $\E$ is any locally connected Grothendieck topos. We then use
this result to reconstruct the indexed sum of ultrafilters, and in the
process of doing so construct interesting and natural bicategories of
\emph{ultramatrices} and \emph{ultraspans}. Finally, we describe how
this relates to the \emph{ultracategories} of~\cite{Makkai1987Stone}.

\subsection{Generalising the codomain category}
\label{sec:gener-codom-categ}
A locally small category $\E$ is a \emph{Grothendieck topos} if it is
equivalent to the category of sheaves on a small site; see, for
example,~\cite[Chapter~III]{Mac-Lane1994Sheaves}. Equivalently, by
Giraud's theorem (cf.~\cite[Appendix]{Mac-Lane1994Sheaves}), $\E$ is a Grothendieck topos just when it is
finitely complete, Barr-exact and infinitary extensive with a small
generating set.

In any Grothendieck topos $\E$, the functor
$\Gamma = \E(1, \thg) \colon \E \rightarrow \cat{Set}$ has a left
adjoint $\Delta \colon \cat{Set} \rightarrow \E$ which sends a set $X$
to the coproduct $\Sigma_{x \in X} 1$. We say that $\E$ is
\emph{locally connected} (or
\emph{molecular}~\cite{Barr1980Molecular}) if $\Delta$ has a further
left adjoint $\pi_0 \colon \E \rightarrow \cat{Set}$. For example,
by~\cite[p.414,~Ex.~7.6]{Deligne1972Theorie}, the topos of sheaves on
a space $X$ is locally connected just when $X$ is locally connected in
the usual sense; in this case,
$\pi_0 \colon \cat{Sh}(X) \rightarrow \cat{Set}$ sends a sheaf to the
set of connected components of the corresponding \'etale space over
$X$.

\begin{Thm}
  \label{thm:8}
  Let $\C$ be extensive and $\E$ a locally connected Grothendieck
  topos. The category $\fc(\C, \E)$ is equivalent to $[\uf_\C, \E]$
  via an equivalence whose two directions are given by the formulae~\eqref{eq:17}
  and~\eqref{eq:13}.\end{Thm}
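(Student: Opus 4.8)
The plan is to transcribe the proof of Theorem~\ref{thm:2}, in the generalised form that yields Theorem~\ref{thm:6}, replacing the terminal object $\beta$ throughout by the composite $\Delta\beta \colon \C \to \cat{Set} \to \E$ with the discrete functor $\Delta$, and replacing the slice-of-presheaves equivalence~\eqref{eq:10} by its evident $\E$-valued analogue. Since $\Delta$ is a left adjoint it preserves all colimits, so $\Delta\beta \in \fc(\C, \E)$; the two substantive points to check are that $\Delta\beta$ is \emph{terminal} in $\fc(\C, \E)$, and that $[\C, \E]/\Delta\beta \simeq [\ue_\C, \E]$.

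For terminality I would argue as follows, and this is the one place where local connectedness of $\E$ is used. Post-composing the adjunction $\pi_0 \dashv \Delta$ gives an adjunction $(\pi_0 \circ \thg) \dashv (\Delta \circ \thg)$ between $[\C, \E]$ and $[\C, \cat{Set}]$, hence for every $F \in [\C, \E]$ a natural bijection between transformations $F \Rightarrow \Delta\beta$ and transformations $\pi_0 F \Rightarrow \beta$. When $F$ preserves finite coproducts so does $\pi_0 F$, as $\pi_0$ is a left adjoint; then Proposition~\ref{prop:1} supplies exactly one transformation $\pi_0 F \Rightarrow \beta$, and therefore exactly one $F \Rightarrow \Delta\beta$. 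By Lemma~\ref{lem:2}(ii), applied with $G = \Delta\beta$ and $\D = \E$, this unique transformation is sum-cartesian, while conversely any $F$ admitting a sum-cartesian transformation to $\Delta\beta$ preserves finite coproducts. Exactly as for~\eqref{eq:12}, this gives an isomorphism $\fc(\C, \E) \cong [\C, \E] \smc \Delta\beta$ onto the full subcategory of $[\C, \E]/\Delta\beta$ on the sum-cartesian arrows.

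Next I would establish $[\C, \E]/\Delta\beta \simeq [\ue_\C, \E]$ as the $\E$-valued form of the category-of-elements equivalence~\eqref{eq:10}, valid because $\E$, being a Grothendieck topos, is infinitary extensive. Since $\Delta\beta X = \sum_{\U \in \beta X} 1$, disjointness and universality of coproducts decompose any $G \to \Delta\beta$ fibrewise as $GX \cong \sum_{\U \in \beta X}(GX)_\U$, and $(X,\U) \mapsto (GX)_\U$ defines a functor on $\el\beta \cong \ue_\C$; conversely $H \colon \ue_\C \to \E$ is sent to $\sum_{\U \in \beta X} H(X,\U) \to \Delta\beta X$, and these assignments are mutually inverse. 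Under this equivalence an arrow $F \to \Delta\beta$ is sum-cartesian precisely when the corresponding functor inverts the class $\M_\Sigma$: for a coproduct coprojection $m \colon U \rightarrowtail X$ the naturality square at $m$ is a pullback iff each induced map $(FU)_\U \to (FX)_{m_!\U}$ is invertible, and these are exactly the images under the functor of the maps $m \colon (U,\U) \to (X, m_!\U)$ comprising $\M_\Sigma$. As $\uf_\C \cong \ue_\C[\M_\Sigma^{-1}]$, the sum-cartesian arrows thus correspond to functors factoring through $\uf_\C$, i.e.\ to the subcategory $[\uf_\C, \E]$.

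Assembling these identifications yields $\fc(\C, \E) \cong [\C, \E] \smc \Delta\beta \simeq [\uf_\C, \E]$, and chasing the constructions exactly as after Theorem~\ref{thm:2} shows the two directions to be~\eqref{eq:17} and~\eqref{eq:13}, with the coproduct and the image-intersection now formed in $\E$ (both available, as $\E$ has all small colimits and cofiltered intersections of subobjects). The main obstacle is the terminality of $\Delta\beta$: without the extra left adjoint $\pi_0$ there is no canonical comparison $F \Rightarrow \Delta\beta$, so local connectedness is precisely what powers the argument, whereas the remaining steps are the infinitary-extensive bookkeeping already carried out over $\cat{Set}$.
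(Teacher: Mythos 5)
Your proposal is correct and follows essentially the same route as the paper: terminality of $\Delta\beta$ in $\fc(\C,\E)$ via the adjunction $\pi_0\circ(\thg)\dashv\Delta\circ(\thg)$ (which is exactly where local connectedness enters), the identification $\fc(\C,\E)\cong[\C,\E]\smc\Delta\beta$ via Lemma~\ref{lem:2}, the $\E$-valued category-of-elements equivalence~\eqref{eq:25} from infinitary extensivity, and the matching of sum-cartesian arrows with $\M_\Sigma$-inverting functors. The only point you gloss that the paper spells out is the verification of the formula~\eqref{eq:17}, which needs that $\Delta$, being a right adjoint of $\pi_0$, preserves the meet $\{\U\}=\bigcap_{U\in\U}\beta U$, so that the defining pullback of $\tilde A(X,\U)$ really is the intersection $\bigcap_{U\in\U}AU$.
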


In proving this, we require a straightforward generalisation of the
equivalence $[\el X, \cat{Set}] \simeq [\A, \cat{Set}] / X$
of~\eqref{eq:10} to an equivalence
\begin{equation}\label{eq:25}
  [\el X, \E] \simeq [\A, \E] / \Delta X
\end{equation}
for any Grothendieck topos $\E$. The generalisation makes use of the
fact that $\E$ is infinitary extensive; this means that it has all
small coproducts, and that for any set $X$, the coproduct functor
$\Sigma \colon \textstyle\prod_{x \in X} (\E / A_x) \rightarrow \E /
(\Sigma_{i \in X} A_x)$ is an equivalence of categories. Taking each
$A_x$ to be terminal and using $\E / 1 \cong \E$, we deduce that
\begin{equation*}
   \Sigma \colon \E^X \rightarrow \E / \Delta X
\end{equation*}
is an equivalence for any set $X$, with pseudoinverse given
(necessarily) by pullback along the coproduct coprojections. By using
this equivalence in place of~\eqref{eq:26}, we may
generalise~\eqref{eq:10} to the desired equivalence~\eqref{eq:25}.
Much as before, $Y \in [\el X, \E]$ is sent to
$\pi \colon \int\!Y \rightarrow \Delta X$ whose component
$\pi_A \colon (\int\!Y)A \rightarrow \Delta XA$ is the coproduct of
the family of maps $(Y(A,x) \rightarrow 1)_{x \in XA}$; while
conversely, $p \colon E \rightarrow X$ in $[\A, \E]/\Delta X$ is sent
to $\tilde E \in [\el X, \E]$ wherein $\tilde E(A,x)$ is the pullback
of $p_A \colon EA \rightarrow \Delta XA$ along
$\Delta x \colon \Delta 1 \rightarrow \Delta XA$.

\begin{proof}[Proof of Theorem~\ref{thm:8}]
  Since
  $\pi_0 \dashv \Delta \dashv \Gamma \colon \E \rightarrow\cat{Set}$,
  both $\pi_0$ and $\Delta$ preserve finite coproducts, so inducing an
  adjunction
  $\pi_0 \circ (\thg) \dashv \Delta \circ (\thg) \colon \fc(\C,
  \cat{Set}) \rightarrow \fc(\C, \E)$, whose right adjoint must send
  the terminal object $\beta \in [\C, \cat{Set}]$ to a terminal object
  $\Delta \beta \in \fc(\C, \E)$. As $\C$ and $\E$ are extensive, it
  follows from this and Lemma~\ref{lem:2}~that
  $\fc(\C, \E) \cong [\C, \E] \,/_{\mathrm{sc}}\, \Delta \beta$ where
  to the right we have the full subcategory of the slice category on
  the sum-cartesian transformations; recall that \emph{sum-cartesian}
  means that the naturality squares at coproduct coprojections are
  pullbacks.

  Using~\eqref{eq:25} we have, like before, an equivalence
  $[\C, \E] / \Delta \beta \simeq [\el \beta, \E] \cong [\ue_\C, \E]$;
  and, like before, an object $p \colon E \rightarrow \Delta \beta$ to
  the left is sum-cartesian just when the corresponding
  $\tilde E \in [\ue_\C, \E]$ inverts the class $\M_\Sigma$ of
  continuous coproduct coprojections. Thus
  $\fc(\C, \E) \cong [\C, \E] \,/_{\mathrm{sc}}\, \Delta \beta \simeq
  [\uf_\C, \E]$ as desired.

  It remains to show that the two directions of the equivalence are
  given as in~\eqref{eq:17} and~\eqref{eq:13}. In the latter case this
  is clear from the construction using~\eqref{eq:25}. In the other
  direction, if $A \in \fc(\C, \E)$, then the corresponding
  $\smash{\tilde A} \in [\ue_\C, \E]$ has its value at $(X, \U)$ given
  by the pullback to the left in:
  \begin{equation*}
    \cd{
      {\tilde A(X, \U)} \ar[r]^-{} \ar[d]_{} &
      {AX} \ar[d]^{\tau_X} \\
      {\Delta 1} \ar[r]^-{\Delta \U} &
      {\Delta \beta X}
    } \qquad \qquad 
    \cd{
      {AU} \ar@{ >->}[r]^-{Am} \ar[d]_{\tau_U} &
      {AX} \ar[d]^{\tau_X} \\
      {\Delta \beta U} \ar@{ >->}[r]^-{\Delta \beta m} &
      {\Delta \beta X}\rlap{ ,}
    }
  \end{equation*}
  where $\tau \colon A \rightarrow \Delta \beta$ is induced by
  terminality of $\Delta \beta$ in $\fc(\C, \E)$. Note, however, that
  $\U \colon 1 \rightarrow \beta X$ in $\cat{Set}$ is the meet of the
  subobjects $(\beta U \rightarrowtail \beta X)_{U \in \U}$. Since
  $\Delta$ is a right adjoint, it preserves meets, as does pullback
  along $\tau_X$; consequently, $\tilde A(X, \U)$ is the meet of the
  subobjects
  $(\tau_X^{-1}(\Delta \beta U) \rightarrowtail AX)_{U \in \U}$. But
  since $\tau$ is sum-cartesian by Lemma~\ref{lem:2}, the square right
  above is a pullback for any $m \colon U \rightarrowtail X$ in $\U$,
  and so we conclude that $\tilde A$ is given as in~\eqref{eq:17} by
  \begin{equation*}
    \tilde A(X, \U) \cong \bigcap_{U \in \U} AU \subseteq AX\rlap{ .}\qedhere
  \end{equation*}
\end{proof}

\subsection{Ultramatrices, ultraspans and the relation to indexed
  sums}
\label{sec:ultr-ultr-relat}

We now wish to describe how this result recaptures the indexed sum of
ultrafilters. In fact, we will do something slightly more general to
draw as perfect an analogy as possible with Proposition~\ref{prop:3}.
The first step there was to transport the strict monoidal structure on
the category $\fc(\cat{Set}, \cat{Set})$ to obtain a monoidal
structure on the equivalent $[\uf, \cat{Set}]$. The analogue here is
to transport the compositional structure of a $2$-category of
finite-coproduct-preserving functors along equivalences of each of its
hom-categories to obtain an equivalent
\emph{bicategory}~\cite{Benabou1967Introduction}.

\begin{Defn}
  \label{def:13}
  \begin{enumerate}[(i),itemsep=0.25\baselineskip]
  \item 
  The $2$-category $\twocat{FC}_{\cat{Set}}$ has sets $X,Y,Z,\dots$
  as objects; hom-categories given by
  $\twocat{FC}_\cat{Set}(X,Y) = \fc(\cat{Set}^X, \cat{Set}^Y)$; and
  composition given by the usual composition of functors and natural
  transformations.

\item   The bicategory $\twocat{UEsp}$ of \emph{ultrafilter species} has
  sets as objects; hom-categories
  $\twocat{UEsp}(X,Y) = [\uf_{\cat{Set}^X}, \cat{Set}^Y]$; and
  composition obtained from that of $\twocat{FC}_{\cat{Set}}$ by
  transporting across the equivalences
  $\fc(\smash{\cat{Set}^X}, \smash{\cat{Set}^Y}) \simeq
  [\uf_{\cat{Set}^X}, \smash{\cat{Set}^Y}]$.
\end{enumerate}
\end{Defn}

The nomenclature ``ultrafilter species'' echoes Joyal's notion of a
species of structures (\emph{esp\'eces de
  structures}~\cite{Joyal1986Foncteurs}), and its generalisation
in~\cite{Fiore2008The-cartesian} to a bicategory $\twocat{Esp}$ of
\emph{generalised species of structures}. We will not labour the
comparison, but suffice it to say that in both bicategories,
composition is given by a substitution formula, which in the case of
$\twocat{Esp}$ is given by equation~(9)
of~\cite{Fiore2008The-cartesian}, and for $\twocat{UEsp}$ is given
by a suitable generalisation of~\eqref{eq:31}.

In Proposition~\ref{prop:3}, we reconstructed the tensor product of
ultrafilters by showing the representables in $[\uf, \cat{Set}]$ to be
closed under the tensor product. To reconstruct the indexed sum of
ultrafilters, we will similarly show that \emph{pointwise representable}
$1$-cells in $\twocat{UEsp}$ are closed under composition. Here,
$F \in \twocat{UEsp}(X,Y) = [\uf_{\cat{Set}^X}, \cat{Set}^Y]$ is
pointwise representable if each functor
$F(\thg)(y) \colon \uf_{\cat{Set}^X} \rightarrow \cat{Set}$ is
representable. The subcategory of pointwise representable functors is
equivalent (via pointwise Yoneda) to the category
$(\uf_{\cat{Set}^X})^Y$, and so a typical pointwise representable
$1$-cell is presented by a $Y$-indexed family of pairs
$(M_y \in \cat{Set}^X, \U_{y} \in \beta(X.M_{y}))$.
In fact, we prefer to think of these data in either one of the
following two alternative ways.
\begin{Defn}
  \label{def:18}
  Let $X$ and $Y$ be sets.
  \begin{enumerate}[(i),itemsep=0.25\baselineskip]
  \item An \emph{ultramatrix} from $X$ to $Y$ is a pair $(M, \U)$
    composed of a matrix of sets $M \in \cat{Set}^{X \times Y}$ together
    with a $Y$-indexed family of ultrafilters $\U_{y}$ on each column
    sum $M_y \defeq \nsig X M(x,y)$.
  \item An \emph{ultrafamily}
  $(g, \U) \colon M \rightsquigarrow Y$ is a function
  $g \colon M \rightarrow Y$ together with an ultrafilter $\U_{y}$ on
  each fibre $g^{-1}(y)$. An \emph{ultraspan} from $X$ to $Y$ is a
  span with left leg a function and right leg an ultrafamily:
  \begin{equation}\label{eq:30}
    \cd[@-0.5em]{ & M \ar[dl]_-{f} \ar@{~>}[dr]^-{(g, \U)} \\
      X & & Y\rlap{ .}
    }
  \end{equation}
  \end{enumerate}
\end{Defn}

It is easy to see using~\eqref{eq:26} that both ultramatrices and
ultraspans from $X$ to $Y$ correspond to pointwise representables in
$\twocat{UEsp}(X,Y)$, and so to certain finite-coproduct-preserving
functors $\cat{Set}^X \rightarrow \cat{Set}^Y$. As in the
introduction, we may call these \emph{generalised ultraproduct
  functors}. Using Proposition~\ref{prop:6}, we see that, one the one
hand, the generalised ultraproduct functor
$\cat{Set}^X \rightarrow \cat{Set}^Y$ encoded by the ultramatrix
$(M, \U)$ is given by:
\begin{equation}\label{eq:18}
  (\,H(x) \mid x \in X\,) \qquad \mapsto \qquad 
  \big(\,(\Pi_{\U_{y}}
    (x,m)\,\mathord\in\,M_{y})\,\!H(x) \mid y \in Y\,\big)\rlap{ .}
\end{equation}
On the other hand, the ultraspan $(f, (g, \U)) \colon X \rightarrow Y$
encodes the functor
\begin{equation*}
  \cat{Set}^X \xrightarrow{\cat{Set}^f} \cat{Set}^M
  \xrightarrow{\Pi_{(g,\U)}} \cat{Set}^Y
\end{equation*}
where $\Pi_{(g,\U)}$ is given by ``ultraproduct on each fibre'';
i.e., its $y$-component is given by restriction
$\cat{Set}^M \rightarrow \cat{Set}^{g^{-1}y}$ followed by ultraproduct
$\Pi_{\U_{y}} \colon \cat{Set}^{g^{-1}y} \rightarrow \cat{Set}$.

The next two definitions are intended to describe how pointwise
representable $1$-cells in $\twocat{UEsp}$ compose in terms of the
representing ultramatrices or ultraspans.

\begin{Defn}
  \label{def:17}
  If $(M, \U)$ and $(N, \V)$ are ultramatrices from $X$ to $Y$ and
  from $Y$ to $Z$, then their \emph{composition} is the
  ultramatrix $(N \cdot M, \V \cdot \U)$ from $X$ to $Z$ whose first
  component is given by the usual matrix multiplication:
  \begin{equation*}
    (N \cdot M)(x,z) = \nsig[y] Y (N(y,z) \times M(x,y))\rlap{ .}
  \end{equation*}
  As for the second component, note that for each $z \in Z$ we have
  an isomorphism
  \begin{equation}\label{eq:34}
    \nsig[(y,n)]{N_z} M_{y} \cong (N \cdot M)_z
  \end{equation}
  sending $(y,n,x,m)$ to $(x,y,n,m)$. We can therefore define the
  ultrafilter $(\V \cdot \U)_z$ on $(N \cdot M)_z$ to be the transport
  across~\eqref{eq:34} of the ultrafilter on $\nsig[(y,n)]{N_z} M_{y}$
  given by the indexed sum
  $(\Sigma_{\V_z} (y,n)\! \in\! N_z)\,\!\U_{y}$. 
\end{Defn}
\begin{Defn}
  \label{def:19}
  Given ultraspans $(f, (g, \U)) \colon X \rightarrow Y$ and
  $(h,(k, \V)) \colon Y \rightarrow Z$, their \emph{composition}
  is the ultraspan whose legs are given by the outer composites in:
  \begin{equation*}
    \cd[@-3.5em@!]{
      & & M \times_Y N
      \ar[dl]_-{p}
      \ar@{~>}[dr]^-{(q, \W)} \pullbackcorner[d] \\
      & M \ar[dl]_-f \ar@{~>}[dr]^-{(g, \U)} & & N \ar[dl]^-{h}
      \ar@{~>}[dr]^-{\scriptstyle (k, \V)} \\
      X && Y && Z\rlap{ .}
    }
  \end{equation*}
  Here, $p$ and $q$ constitute a pullback of $g$ and $h$ in
  $\cat{Set}$. To the top right, the pullback ultrafamily
  $(q, \W) \colon M \times_Y N \rightsquigarrow N$ has $\W_n$ given
  by the transport of $\U_{hn}$ across the isomorphism
  $g^{-1}(hn) \cong q^{-1}(n)$. Finally, the composite
  $(kq, \V \W)$ of the ultrafamilies $(q, \W)$ and $(k, \V)$ has
  $(\V\W)_z$ given by the transport of
  $(\Sigma_{\V_z} n\! \in\! k^{-1}z)\,\!\W_{n}$ across the
  isomorphism
  $(\Sigma n\! \in\! k^{-1}z)\,\!q^{-1}n \cong (kq)^{-1}z$.
\end{Defn}

The validity of these descriptions is confirmed by:
\begin{Prop}\label{prop:15}
  The pointwise representable $1$-cells in $\twocat{UEsp}$ are
  closed under composition, with the induced composition on
  ultramatrices and ultraspans given as in Definition~\ref{def:17} and
  Definition~\ref{def:19} respectively.
\end{Prop}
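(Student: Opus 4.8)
The plan is to prove the result in ultraspan form first and then translate to ultramatrices. Recall from the discussion preceding the statement that a pointwise representable $1$-cell $X \to Y$ corresponds under the equivalences of Theorem~\ref{thm:8} to a generalised ultraproduct functor, which for the ultraspan $(f,(g,\U))$ is the composite $\cat{Set}^X \xrightarrow{\cat{Set}^f} \cat{Set}^M \xrightarrow{\Pi_{(g,\U)}} \cat{Set}^Y$. Since composition in $\twocat{UEsp}$ is \emph{by definition} the transport of functor composition across these equivalences, it suffices to compose two such functors and verify that the result is again of this form, with representing ultraspan the one described in Definition~\ref{def:19}. Thus the entire content is a computation of functor composition, and I need only exhibit an isomorphism of the composite with the claimed generalised ultraproduct functor.

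First I would isolate two isomorphisms. The first is a Beck--Chevalley identity: for the pullback of $g \colon M \to Y$ against $h \colon N \to Y$ with projections $p,q$, one has $\cat{Set}^h \circ \Pi_{(g,\U)} \cong \Pi_{(q,\W)} \circ \cat{Set}^p$, where $(q,\W)$ is the pullback ultrafamily of Definition~\ref{def:19}. This is essentially definitional: each fibre $q^{-1}(n)$ is canonically identified with $g^{-1}(hn)$, the family $\W_n$ is \emph{by construction} the transport of $\U_{hn}$ across this identification, and restriction along $p$ matches the two families of arguments, so the two fibrewise ultraproducts coincide on the nose. The second is transitivity of fibrewise ultraproducts: $\Pi_{(k,\V)} \circ \Pi_{(q,\W)} \cong \Pi_{(kq,\V\W)}$, where $(kq,\V\W)$ is the composite ultrafamily. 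Here $(\V\W)_z$ is an indexed sum of the $\W_n$ over $k^{-1}z$, and the required isomorphism is precisely Proposition~\ref{prop:12} applied in each fibre over $z \in Z$, after identifying $(\Sigma n \in k^{-1}z)\,q^{-1}n$ with $(kq)^{-1}z$.

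Concatenating the two ultraspans, the composite $1$-cell is the functor $\cat{Set}^X \xrightarrow{\cat{Set}^f} \cat{Set}^M \xrightarrow{\Pi_{(g,\U)}} \cat{Set}^Y \xrightarrow{\cat{Set}^h} \cat{Set}^N \xrightarrow{\Pi_{(k,\V)}} \cat{Set}^Z$. Applying the Beck--Chevalley identity to the middle pair rewrites this as
\[
  \cat{Set}^X \xrightarrow{\cat{Set}^f} \cat{Set}^M \xrightarrow{\cat{Set}^p} \cat{Set}^{M \times_Y N} \xrightarrow{\Pi_{(q,\W)}} \cat{Set}^N \xrightarrow{\Pi_{(k,\V)}} \cat{Set}^Z,
\]
and then using $\cat{Set}^p \circ \cat{Set}^f = \cat{Set}^{fp}$ together with the transitivity isomorphism reduces it to $\Pi_{(kq,\V\W)} \circ \cat{Set}^{fp}$. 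This is exactly the generalised ultraproduct functor of the composite ultraspan $(fp,(kq,\V\W))$ of Definition~\ref{def:19}, so pointwise representables are closed under composition, with the stated composition law on ultraspans. For the ultramatrix description I would transport along the correspondence between ultraspans and ultramatrices induced by $\cat{Set}^X \simeq \cat{Set}/X$ of~\eqref{eq:26}; under this correspondence the pullback-then-indexed-sum recipe becomes the matrix multiplication and columnwise indexed sum of Definition~\ref{def:17}, the bookkeeping being governed by~\eqref{eq:34}.

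I expect the main obstacle to be not any individual isomorphism --- each is either definitional or an instance of Proposition~\ref{prop:12} --- but rather the careful tracking of the many fibrewise identifications so that the transported ultrafilters genuinely match the prescriptions in Definitions~\ref{def:17} and~\ref{def:19}. The Beck--Chevalley step is where this is most delicate, since it is the one point where pullback and fibrewise ultraproduct interact, and one must check that the canonical fibre isomorphisms $q^{-1}(n) \cong g^{-1}(hn)$ are compatible with restriction along $p$ uniformly in $n$.
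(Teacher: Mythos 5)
Your argument is correct, and it is the mirror image of the paper's: both proofs come down to a single application of Proposition~\ref{prop:12}, but the paper works on the ultramatrix side --- observing that the two composition laws correspond under~\eqref{eq:26}, then computing $\int\!G\circ\int\!F$ directly from the explicit formula~\eqref{eq:18} and identifying the result with the ultramatrix $(N\cdot M,\V\cdot\U)$ --- whereas you work on the ultraspan side and transport to matrices at the end. What your organisation buys is a cleaner factorisation of the computation into two reusable isomorphisms, a Beck--Chevalley square $\cat{Set}^h\circ\Pi_{(g,\U)}\cong\Pi_{(q,\W)}\circ\cat{Set}^p$ and a transitivity law $\Pi_{(k,\V)}\circ\Pi_{(q,\W)}\cong\Pi_{(kq,\V\W)}$, each of which is checked fibrewise exactly as you say (the first is definitional, the second is Proposition~\ref{prop:12} over each $z\in Z$ after the identification $(\Sigma\, n\in k^{-1}z)\,q^{-1}n\cong(kq)^{-1}z$); the paper's version is shorter but leaves the corresponding bookkeeping implicit in one chain of isomorphisms. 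The only item you omit is the paper's opening remark that the identity $1$-cells are pointwise representable; this is not needed for the literal statement (closure under binary composition) but is used immediately afterwards to assemble the bicategories $\twocat{UMtx}$ and $\twocat{US}$, so it is worth a sentence.
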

\begin{proof}
  The identity $1$-cells in $\twocat{UEsp}$ are easily seen to be
  pointwise representable. As for binary composition, the composition
  laws in Definitions~\ref{def:17} and~\ref{def:19} correspond
  under~\eqref{eq:26}, so that it suffices to check the claim on
  ultramatrices. So let $F \in \twocat{UEsp}(X,Y)$ and
  $G \in \twocat{UEsp}(Y,Z)$ be represented by the respective
  ultramatrices $(M, \U)$ and $(N,\V)$. By~\eqref{eq:18}, the
  corresponding generalised ultraproduct functors
  $\int\!F \in \fc(\cat{Set}^X, \cat{Set}^Y)$ and
  $\int\!G \in \fc(\cat{Set}^Y, \cat{Set}^Z)$ have respective actions
  on objects
  \begin{equation*}
    \textstyle(\int\!F)(H)(y) = (\Pi_{\U_{y}}
    (x,m)\,\mathord\in\,M_{y})\,\!H(x) \ \text{ and } \ 
    \textstyle(\int\!G)(K)(z) = (\Pi_{\V_{z}}
    (y,n)\,\mathord\in\,N_{z})\,\!K(y)\rlap{ .}
  \end{equation*}
  Therefore 
  $\int GF \cong \int\!G \circ \int\!F \colon \cat{Set}^X \rightarrow \cat{Set}^Z$
  satisfies
  \begin{align*}
    \textstyle(\int\!GF) (H)(z) & \cong (\Pi_{\V_{z}}
    (y,n)\,\mathord\in\,N_{z})(\Pi_{\U_{y}}
    (x,m)\,\mathord\in\,M_{y})\,\!H(x)\\
    &\cong (\Pi_{(\Sigma_{\V_z} (y,n)\! \in\! N_z)\,\!\U_{y}}
    (y,n,x,m)\,\mathord\in\,\nsig[(y,n)]{N_z} M_{y})\,\!H(x)\\
    &\cong (\Pi_{(\V \cdot \U)_z}
    (x,y,n,m) \,\mathord\in\,(N \cdot M)_z)\,\!H(x)\rlap{ ,}
  \end{align*}
  using Proposition~\ref{prop:12} and the definition of
  $(\V \cdot \U)_z$. Thus, by~\eqref{eq:18} again, the pointwise
  representability of $GF$ is witnessed by the ultramatrix
  $(N \cdot M, \V \cdot \U)$.
\end{proof}

It follows from this result that there are bicategories
$\twocat{UMtx}$ (resp., $\twocat{US}$) in which objects are sets;
$1$-cells are ultramatrices (resp., ultraspans) composing as in
Definition~\ref{def:17} (resp., Definition~\ref{def:19}); and
$2$-cells are determined by the requirement that each bicategory be
biequivalent to the locally full sub-bicategory of $\twocat{UEsp}$
on the pointwise representable $1$-cells.

It remains to show that the composition laws in $\twocat{UMtx}$ and
$\twocat{US}$ allow us to reconstruct the indexed sum of
ultrafilters, so fulfilling the objective of this section. This is
easiest to see in the case of $\twocat{US}$. Suppose that we are
given a set $X$ equipped with an ultrafilter $\U$ and an $X$-indexed
family of sets $Y(x)$ each equipped with an ultrafilter $\V(x)$. We
can represent these data as a pair of composable ultraspans as to the
left in:
\begin{equation*}
  \cd[@-1.5em@!]{
    & X.Y \ar[dl]_-1 \ar@{~>}[dr]^-{(\pi_Y, \V)} & & X \ar[dl]^-{1}
    \ar@{~>}[dr]^-{(!, \U)} \\
    X.Y && X && 1
  } \qquad \qquad 
  \cd[@-1.5em@!]{
    & X.Y \ar[dl]_-1 \ar@{~>}[dr]^-{(!, \U.\V)} \\
    X.Y && 1\rlap{ ,}
  }
\end{equation*}
whose composite encodes the indexed sum $\U. \V$
as right above.

\begin{Rk}
  \label{rk:9}
  In Remark~\ref{rk:4} above, we explained how categories
  \emph{enriched} over the monoidal category $[\uf, \cat{Set}]$ can be
  thought of as ordinary categories endowed with abstract ultrapower
  functors. It is possible to extend this so as to capture
  categories endowed with abstract \emph{ultraproduct} functors by
  using the theory of \emph{categories enriched in a bicategory} as
  in~\cite{Walters1981Sheaves}.

  Rather than $[\uf, \cat{Set}]$-enriched categories with copowers by
  representables, we consider $\twocat{UEsp}$-enriched categories with
  copowers by pointwise representable $1$-cells. We might guess that
  such enriched categories correspond to ordinary categories $\C$
  admitting abstract ultraproduct functors
  $\Pi_\U \colon \C^X \rightarrow \C$. The reality is slightly more
  subtle; while some details still require sorting out, it appears
  that the $\twocat{UEsp}$-enriched categories with copowers as above
  correspond to \emph{$\cat{Set}$-indexed prestacks}---i.e.,
  pseudo-functors
  $\mathbb C \colon \cat{Set}^\mathrm{op} \rightarrow \twocat{CAT}$
  satisfying a descent condition---equipped with suitably coherent
  abstract ultrapower functors
  $\Pi_{(f,\U)} \colon \mathbb{C}_X \rightarrow \mathbb{C}_Y$ for each
  ultrafamily $(f, \U) \colon X \rightsquigarrow
  Y$.

  While the details must await a further paper, these observations
  draw an interesting link to Makkai's
  \emph{ultracategories}~\cite{Makkai1987Stone, Makkai1993Duality}. As
  in the introduction, an ultracategory is a category endowed with
  abstract ultraproduct structure, as well as interpretations for any
  \emph{ultramorphism}, i.e., ``definable map between ultraproducts''.
  Makkai's main result is that the ultracategory structure on the
  category of models of a coherent theory $\mathbb{T}$ in either
  intuitionistic or classical first-order logic is sufficient to
  reconstruct $\mathbb{T}$ to within Morita equivalence; more
  precisely, it suffices to reconstruct the classifying pretopos of
  $\mathbb{T}$.
  
  These results of Makkai were given new proofs by Lurie
  in~\cite{Lurie2018Ultracategories}, with a significantly simplified
  definition of what it means for an ultracategory to admit
  interpretations of any ultramorphism; and although we have not yet
  completed the analysis, it seems that this additional structure is
  exactly what $\twocat{UEsp}$-enrichment provides besides the
  existence of abstract ultraproduct functors. In future work we hope
  to investigate this further with a view to giving a purely
  enriched-categorical proof of Makkai's reconstruction result.
\end{Rk}

\section{Locally connected classifying toposes}

In Section~\ref{sec:relat-model-theory}, we discussed the algebraic
semantics for \emph{classical} propositional theories in Boolean
algebras, and for first-order theories in Boolean pretoposes. There
are similar semantics for certain fragments of intuitionistic logic:
in particular, the \emph{coherent} fragment, which allows only the
logical connectives $\exists, \vee, \wedge, \top, \bot$, has an
algebraic semantics in distributive lattices (for the propositional
case) and in pretoposes (for the first-order case); see, for
example~\cite{Makkai1977First}.

Now for classical propositional theories, there is a restricted
semantics valued not in Boolean algebras but \emph{complete atomic}
Boolean algebras. Rather than the Lindenbaum--Tarski algebra
$B_\mathbb{P}$, the appropriate universal model in this context is the
power-set algebra $\P(T_\mathbb{P})$ on the set $T_\mathbb{P}$ of
complete theories extending $\mathbb{P}$, with the universal valuation
of $\mathbb{P}$ in $\P(T_\mathbb{P})$ sending each primitive
proposition $\varphi$ to the set of complete extensions which validate
it. Since $T_\mathbb{P}$ is equally the set $\beta(B_\mathbb{P})$ of
ultrafilters on $B_\mathbb{P}$, the passage from $B_\mathbb{P}$ to
$\P(T_\mathbb{P})$ can be understood in terms of the lattice-theoretic
construction of \emph{canonical extension}~\cite{Jonsson1951Boolean}:
in general, for a Boolean algebra $B$, its canonical extension is the
power-set algebra $B^\delta \defeq \P(\beta B)$.

Similarly, for coherent intuitionistic propositional theories, the
semantics in distributive lattices restricts to one valued in
completely distributive algebraic lattices; these are equally well the
down-set lattices of posets, and posets are the same thing as
symmetric, transitive Kripke frames, so this is really another view on
the \emph{Kripke semantics} of intuitionistic
logic~\cite[\sec2.5]{Troelstra1988Constructivism}. 
In this context, the classifying distributive lattice $D_\mathbb{P}$
of a theory $\mathbb{P}$ is replaced by its canonical extension
$D_\mathbb{P}^\delta$, constructible as in~\cite{Gehrke1994Bounded} as
the downset-lattice of the poset of prime filters in $D_\mathbb{P}$
(ordered by reverse inclusion);
this is an algebraic reformulation of the construction of the
canonical Kripke
model~\cite[Definition~2.6.4]{Troelstra1988Constructivism} of an
intuitionistic propositional theory.

An obvious question is whether the alternate semantics detailed above
lift from propositional to first-order logic. A variety of positive
answers have been given to this question. One positive answer is given
in~\cite{Makkai1982Full}; there, Makkai introduces the notion of a
\emph{prime-generated topos}, as a Grothendieck topos whose subobject
lattices are completely distributive algebraic\footnote{This is not
  the original definition of prime-generation
  from~\cite{Makkai1981The-topos}, but an equivalent one
  from~\cite[\sec 3]{Barr1987On-representations}.}. He then considers
semantics for a coherent intuitionistic first-order theory
$\mathbb{T}$ valued in prime-generated toposes, and constructs the
universal such model in what he calls the \emph{topos of types} of
$\mathbb{T}$. This is obtained from the classifying pretopos
$\C = \cl$ of $\mathbb{T}$ by a ``categorified canonical extension'',
obtained by first forming a category $\P\F_\C$ of \emph{prime filters}
in $\C$, defined similarly to our $\uf_\C$, and then taking a suitable
sheaf subcategory $\tau(\cl)$ of $[\P\F_\C^\mathrm{op}, \cat{Set}]$.
The name ``topos of types'' for $\tau(\cl)$ derives from the fact
that, as Makkai puts it, it gives `a reasonable codification of the
`discrete' (non topological) syntactical structure of types of the
theory''~\cite[p.196]{Makkai1981The-topos}. The idea that the passage
$\cl \mapsto \tau(\cl)$ should be seen as a kind of canonical
extension was made precise by Coumans
in~\cite{Coumans2012Generalising}.

In this section, we take a slightly different view on lifting
canonical extension to the first-order coherent context. In Makkai's
approach, the distributive subobject lattices of $\cl$ are completed
via canonical extension to the completely distributive algebraic
subobject lattices of $\tau(\cl)$. For us, the focus will instead be
on the Boolean algebras of coproduct summands in $\cl$---corresponding
logically to decidable predicates in $\mathbb{T}$---which will be
completed via canonical extension to complete atomic Boolean algebras
of coproduct-summands. The property of having complete atomic Boolean
algebras of coproduct-summands is,
by~\cite[Theorem~15]{Barr1980Molecular}, characteristic of locally
connected Grothendieck toposes, and so what we aim to describe in the
\emph{locally connected classifying topos} of a given pretopos $\C$.
The existence of this follows from results
of~\cite{Funk1999The-locally}, but the description there is rather
inexplicit; we aim to give a concrete construction in terms of sheaves
on the category of ultrafilters, and to compare this to the toposes of
types of Makkai and others.

\subsection{The lextensive case}
\label{sec:lextensive-case}
In this section, as a warm-up to our main result, we construct the
locally connected classifying topos of a small \emph{lextensive}
category---that is, a category which is both finitely complete and
extensive.

We first make precise what we mean by this.
Recall that a \emph{geometric morphism} $f \colon \E \rightarrow \F$
between toposes is an adjoint pair of functors
$f^\ast \dashv f_\ast \colon \E \rightarrow \F$ such that $f^\ast$
(the \emph{inverse image functor}) preserves finite limits. We write
$\twocat{LCGTop}$ for the $2$-category of locally connected
Grothendieck toposes, geometric morphisms and natural transformations
$f^\ast \Rightarrow g^\ast$, and write $\twocat{Lext}$ for the
$2$-category of lextensive categories, lextensive
functors (i.e., ones preserving finite limits and finite coproducts)
and arbitrary natural transformations. As every locally connected
Grothendieck topos and every inverse image functor between such is
lextensive, we have a forgetful $2$-functor
$\twocat{LCGTop}^\mathrm{op} \rightarrow \twocat{Lext}$.

\begin{Defn}
  \label{def:20}
  A \emph{locally connected classifying topos} for an extensive
  category $\C$ is a left biadjoint at $\C$ for the forgetful
  $2$-functor
  $\twocat{LCGTop}^\mathrm{op} \rightarrow \twocat{Lext}$.
\end{Defn}

Here, and in what follows, when we speak of a \emph{left biadjoint at
  $X$} for a $2$-functor
$U \colon \twocat{A} \rightarrow \twocat{B}$, we mean a
birepresentation (in the sense of~\cite{Street1980Fibrations}) for the
$2$-functor
$\twocat B(X, U\thg) \colon \twocat A \rightarrow \twocat{CAT}$.
More concretely, then, a locally connected classifying topos for the lextensive $\C$ comprises
a locally connected Grothendieck topos $\cat{Lc}(\C)$ and a lextensive functor
$\eta \colon \C \rightarrow \cat{Lc}(\C)$ which is \emph{universal}
in the sense that, for each locally connected Grothendieck topos $\E$,
we have an
equivalence:
\begin{equation}
  \label{eq:46}
    \twocat{LCGTop}(\E, \cat{Lc}(\C)) \simeq \twocat{Lext}(\C, \E) \\
\end{equation}
induced by the assignment $f \mapsto f^\ast \circ \eta$.

Our goal is to give an explicit construction of a locally connected
classifying topos for any small lextensive category $\C$. For this, we
require the result sometimes known as \emph{Diaconescu's theorem}; it
can be found proved in, for
example,~\cite[Theorem~VII.7.2]{Mac-Lane1994Sheaves}.

\begin{Prop}
  \label{prop:11}
  If $\A$ is a small category, then the presheaf topos
  $[\A^\mathrm{op}, \cat{Set}]$ classifies flat functors out of $\A$.
  More precisely, for each Grothendieck topos $\E$, the assignment
  $f \mapsto f^\ast \circ y$ induces an equivalence of categories
\begin{equation}
  \label{eq:53}
    \twocat{GTop}(\E, [\A^\mathrm{op}, \cat{Set}]) \simeq \twocat{Flat}(\A, \E) \rlap{ .}
\end{equation}
\end{Prop}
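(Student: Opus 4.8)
The plan is to realise the asserted equivalence as a composite of three equivalences that pass through the intermediate category of finite-limit-preserving, cocontinuous functors $[\A^\op, \cat{Set}] \to \E$. The starting point is the universal property of the presheaf construction: the Yoneda embedding $y \colon \A \to [\A^\op, \cat{Set}]$ exhibits the presheaf category as the free cocompletion of $\A$, so that for any cocomplete, locally small category $\D$, restriction along $y$ is an equivalence between the category $\mathrm{Cocont}([\A^\op, \cat{Set}], \D)$ of colimit-preserving functors and all natural transformations, and the functor category $[\A, \D]$, with pseudo-inverse sending $F \colon \A \to \D$ to its left Kan extension $\Lan_y F$. Since a Grothendieck topos $\E$ is cocomplete and locally small, this applies with $\D = \E$. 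Being an equivalence, this comparison is fully faithful, so whiskering with $y$ identifies natural transformations between cocontinuous functors $[\A^\op, \cat{Set}] \to \E$ with natural transformations between their restrictions; this is what will take care of the $2$-cells throughout.

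Next I would identify geometric morphisms $\E \to [\A^\op, \cat{Set}]$ with the \emph{left-exact} cocontinuous functors $[\A^\op, \cat{Set}] \to \E$. By definition a geometric morphism is determined by its inverse image $f^\ast$, which preserves finite limits and, being a left adjoint, all small colimits; and a $2$-cell $f \Rightarrow g$ is exactly a natural transformation $f^\ast \Rightarrow g^\ast$. Conversely, a functor $G \colon [\A^\op, \cat{Set}] \to \E$ preserving all small colimits automatically admits a right adjoint, since the presheaf topos is locally presentable and a cocontinuous functor out of a locally presentable category has a right adjoint by the adjoint functor theorem. Hence $G$ is the inverse image of a geometric morphism, unique up to isomorphism, precisely when $G$ is cocontinuous and left exact, and $f \mapsto f^\ast$ gives an equivalence of $\twocat{GTop}(\E, [\A^\op, \cat{Set}])$ with the full subcategory of $\mathrm{Cocont}([\A^\op, \cat{Set}], \E)$ on the left-exact functors.

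Finally I would transport the left-exactness condition across the cocompletion equivalence of the first paragraph. Under $G \mapsto G \circ y$, a cocontinuous $G = \Lan_y F$ corresponds to $F \colon \A \to \E$, and by the very meaning of flatness $F$ is flat exactly when $\Lan_y F$ preserves finite limits; so restricting the first equivalence to left-exact cocontinuous functors on one side yields $\twocat{Flat}(\A, \E)$ on the other. Composing the three equivalences gives the assignment $f \mapsto f^\ast \mapsto f^\ast \circ y$ of the statement. The step I expect to demand the most care is the middle one: verifying that cocontinuity alone forces existence of the right adjoint, so that a left-exact cocontinuous functor really is the inverse image of a geometric morphism—this is where local presentability of the presheaf topos (equivalently, the special adjoint functor theorem) does the genuine work. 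The flatness identification in the last step is then immediate once flatness is read as ``the Yoneda left Kan extension is left exact''; if a combinatorial definition of flatness is in force instead, one additionally cites the standard lemma that the two formulations coincide.
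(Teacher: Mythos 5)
Your argument is correct and is essentially the standard proof of Diaconescu's theorem: the paper does not prove Proposition~\ref{prop:11} itself but cites \cite[Theorem~VII.7.2]{Mac-Lane1994Sheaves}, whose argument is precisely your three-step decomposition, with the tensor functor $\thg \otimes_\A F$ playing the role of $\Lan_y F$. In particular, your reading of flatness as ``$\Lan_y F$ preserves finite limits'' is exactly the definition the paper adopts in the paragraph following the statement, so no further lemma reconciling it with the combinatorial (filtered category of elements) formulation is needed.
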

Here, we define $\twocat{Flat}(\A, \E)$ as the full subcategory of
$[\A, \E]$ on the flat functors, but for this we should clarify what
``flat'' means. One definition is that $F \colon \A \rightarrow \E$ is
flat just when
$\mathrm{Lan}_{y} F \colon [\A^\mathrm{op}, \cat{Set}] \rightarrow
\E$, its left Kan extension along the Yoneda embedding of
$[\A^\mathrm{op}, \cat{Set}]$, preserves finite limits; this is a
general categorical definition which makes sense for any small $\A$
and cocomplete $\E$. On the other hand, when $\E$ is a Grothendieck
topos as above, a more explicit characterisation is possible which
generalises a well-known characterisation when $\E = \cat{Set}$.

Given $F \in [\A, \E]$, we write $\el F$ for the
\emph{category of elements} of $F$: the internal category in $\E$ with
underlying graph
\begin{equation*}
  \cd{
   \textstyle\sum_{a,b\in \A}\sum_{f \in \A(a,b)}
   Da \ar@<3pt>[r]^-{s}
   \ar@<-3pt>[r]_-{t} &
  \textstyle\sum_{a \in \A} Da 
  }
\end{equation*}
where $s$ maps the $(a,b,f)$-summand to the $a$-summand via $1_{Da}$,
and where $t$ maps the $(a,b,f)$-summand to the $b$-summand via $Df$.
There is a standard notion---see, for
example~\cite[Definition~B2.6.2]{Johnstone2002Sketches}---of what it
means for an internal category $\mathbb{C}$ in a topos to
be \emph{cofiltered}; in the internal language of the topos, it says
that ``every finite diagram in $\mathbb{C}$ has a cocone under it''.
The key result we will need is the following; for a proof,
see~\cite[Theorem~B3.2.7]{Johnstone2002Sketches}.
\begin{Prop}
  \label{prop:20}
  If $\A$ is a small category and $\E$ is a Grothendieck topos, then
  $F \colon \A \rightarrow \E$ is flat if and only if the internal
  category $\el F$ in $\E$ is cofiltered.
\end{Prop}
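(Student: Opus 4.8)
The plan is to unwind the definition of flatness into the three standard ``filtering'' conditions on $F$ and then recognise these as precisely the internal assertion that $\el F$ is cofiltered. Since finite limits in $[\A^{\op}, \Set]$ are generated by the terminal object, binary products and equalizers, $F$ is flat exactly when $\mathrm{Lan}_y F$ preserves these three classes of limit. The computational engine throughout is the coend/colimit formula $\mathrm{Lan}_y F(P) \cong \colim_{\el P} F$, valid because $\mathrm{Lan}_y F$ is cocontinuous and restricts to $F$ along $y$, together with the density presentation of each presheaf as a colimit of representables. The three conditions I would isolate are: (i) the family $(Fa \to 1)_{a \in \ob \A}$ is epimorphic; (ii) for each pair $a,b$, the family $((Fg,Fh) \colon Fc \to Fa \times Fb)$ indexed by spans $a \xleftarrow{g} c \xrightarrow{h} b$ is epimorphic; and (iii) for each parallel pair $u,v \colon a \rightrightarrows b$, the family $(Fw \colon Fc \to Fa)$ indexed by the arrows $w$ with $uw = vw$ is epimorphic onto the equalizer of $Fu, Fv$. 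These say exactly that $\el F$ is internally inhabited, codirected on objects, and able to equalize parallel pairs; that is, that $\el F$ is cofiltered in the internal language of $\E$.

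For the forward direction (flat $\Rightarrow$ cofiltered) I would feed specific finite limits of representables into $\mathrm{Lan}_y F$ and read off the resulting epimorphic families, the point in each case being simply to identify the category of elements of the presheaf-level limit. Preservation of the terminal presheaf, whose category of elements is $\A$, forces $\colim_{\A} F \cong 1$ and so yields the inhabitedness condition (i). Preservation of the product $ya \times yb$ identifies $\mathrm{Lan}_y F(ya \times yb)$ with the colimit of $F$ over the category of elements of $ya \times yb$ -- whose objects are precisely the spans $a \leftarrow c \rightarrow b$ -- and equates it with $Fa \times Fb$, which is condition (ii). Likewise, preservation of the equalizer of $yu, yv \colon ya \rightrightarrows yb$ gives condition (iii).

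The converse (cofiltered $\Rightarrow$ flat) is where the real work lies, and is the step I expect to be the main obstacle. One cannot argue pointwise, since a Grothendieck topos need not have enough points; instead I would work entirely in the internal language of $\E$ via Kripke--Joyal (sheaf) semantics. The heart of the matter is the classical fact that filtered colimits commute with finite limits, now asserted \emph{internally}: conditions (i)--(iii) say that $F$ is, internally, a filtered colimit of representables, and one must show such a colimit is preserved by the three generating finite limits. Concretely, I would resolve a given finite-limit cone into representables, transport the problem across $\mathrm{Lan}_y F(P) \cong \colim_{\el P} F$, and then use the cofiltering data to complete compatible generalised elements into the limit. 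The delicate part is bookkeeping the covers: each ``choose a common refinement'' or ``choose an equalizing arrow'' is only available locally, on the members of an epimorphic family, so one must verify that these local choices glue into the genuine limit-preservation statement in $\E$. This is exactly the content of Johnstone's internal treatment, and once the internal filtering conditions are in hand the remaining verification is a careful diagram chase in the Kripke--Joyal semantics rather than anything conceptually new.
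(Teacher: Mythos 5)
The first thing to note is that the paper does not prove this proposition at all: it is quoted as a known result, with the proof deferred to Johnstone, \emph{Sketches of an Elephant}, Theorem~B3.2.7. Your outline is essentially a summary of that standard proof (also in Mac~Lane--Moerdijk, VII.8--9): isolate the three ``filtering'' conditions, observe that they are the Kripke--Joyal translation of internal cofilteredness of $\el F$, derive them from flatness by evaluating $\mathrm{Lan}_y F$ at the terminal presheaf, at $ya \times yb$, and at the equalizer of $yu,yv$, and then prove the converse by an internal version of the commutation of filtered colimits with finite limits. The forward direction as you describe it is correct and essentially complete: the identification of $\el(ya \times yb)$ with the category of spans and of the category of elements of the equalizer with the category of equalizing arrows is right, and joint epimorphy of the relevant families follows because colimit cocones are jointly epimorphic.

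As a standalone proof, however, the proposal has a genuine gap, and you have located it yourself: the converse direction is described but not executed. Two things are missing. First, the claim that conditions (i)--(iii) ``say exactly'' that $\el F$ is internally cofiltered is itself a lemma requiring a Kripke--Joyal translation (external jointly epimorphic families indexed by sets of spans or arrows, versus internal existential statements about the objects-of-objects and objects-of-morphisms of $\el F$); this is routine but nonzero work, and in Johnstone it is a separate step. Second, and more seriously, the deduction that (i)--(iii) imply preservation of \emph{all} finite limits --- the reduction of an arbitrary finite limit of presheaves to finite limits of representables via the density colimit, followed by the internal filtered-colimit/finite-limit interchange, with all choices made only locally on covers and then glued --- is precisely the content of the theorem, and declaring it ``a careful diagram chase rather than anything conceptually new'' does not discharge it. Since the paper itself only cites the result, your sketch is arguably in the same spirit as the paper's treatment; but it should be presented as a citation together with a proof outline, not as a proof.
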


With these preliminaries in place, we can now give:

\begin{Prop}
  \label{prop:19}
  If $\C$ is small and lextensive, then 
  $[{\uf_\C}^\mathrm{op}, \cat{Set}]$ is a locally connected
  classifying topos for $\C$.
\end{Prop}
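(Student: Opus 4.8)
The plan is to realise $\E_0 := [\uf_\C^{\op},\cat{Set}]$ as the classifying topos of Definition~\ref{def:20} by composing two known equivalences with a restriction of Theorem~\ref{thm:8}. Since $\C$ is small, so is $\uf_\C$, hence $\E_0$ is a Grothendieck topos; and being a presheaf topos it is locally connected, with $\pi_0 = \colim \dashv \Delta \dashv \lim = \Gamma$. Thus for any locally connected Grothendieck topos $\E$ the hom-category $\twocat{LCGTop}(\E,\E_0)$ coincides with $\twocat{GTop}(\E,\E_0)$, and Proposition~\ref{prop:11} gives $\twocat{GTop}(\E,\E_0)\simeq\twocat{Flat}(\uf_\C,\E)$ via $f\mapsto f^\ast\circ y$, where $y\colon\uf_\C\to\E_0$ is the Yoneda embedding. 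I would set $\eta:=\int y\colon\C\to\E_0$, using Theorem~\ref{thm:8} with codomain $\E_0$. The construction of that equivalence via \eqref{eq:17} and \eqref{eq:13} is manifestly natural in the codomain with respect to functors preserving finite limits and small coproducts, so the composite under construction sends $f\mapsto f^\ast\circ y\mapsto\int(f^\ast\circ y)\cong f^\ast\circ\int y=f^\ast\circ\eta$, which is exactly the assignment demanded by Definition~\ref{def:20}.

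The crux is that the equivalence $\int\colon[\uf_\C,\E]\simeq\fc(\C,\E)$ restricts to an equivalence $\twocat{Flat}(\uf_\C,\E)\simeq\twocat{Lext}(\C,\E)$; as both sides are full subcategories it suffices to show $\int$ carries flat functors to lextensive ones and its inverse $\Phi=\widetilde{(\thg)}$ carries lextensive functors to flat ones. For the first I would begin with $\eta$ itself. As evaluation $\mathrm{ev}_{(X,\U)}\colon\E_0\to\cat{Set}$ preserves all limits and colimits, naturality gives $\mathrm{ev}_{(X,\U)}\circ\eta\cong\int(\mathrm{ev}_{(X,\U)}\circ y)=\int\uf_\C((X,\U),\thg)$: this is $\int$ of a representable, hence of a flat $\cat{Set}$-valued functor. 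Over $\cat{Set}$ such a functor is a filtered colimit of representables, so its image under the cocontinuous $\int$ is a filtered colimit of the ultrahom functors $\C(X,\thg)_\U=\colim_{U\in\U}\C(U,\thg)$, each of which, being a filtered colimit of representables, is left exact; as filtered colimits of left-exact $\cat{Set}$-valued functors are left exact, $\mathrm{ev}_{(X,\U)}\circ\eta$ is lextensive, and since finite limits and coproducts in $\E_0$ are pointwise, $\eta$ is lextensive. For a general flat $B\colon\uf_\C\to\E$, Proposition~\ref{prop:11} writes $B\cong f^\ast\circ y$, whence $\int B\cong f^\ast\circ\eta$ is lextensive because $f^\ast$ is left exact and finite-coproduct-preserving.

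The reverse inclusion, that $A\in\twocat{Lext}(\C,\E)$ has $\tilde A=\Phi(A)$ flat, is the step I expect to be the main obstacle, and I would attack it through Proposition~\ref{prop:20}. Since $A$ preserves finite limits and $\C$ is finitely complete, $A$ is flat, so by Proposition~\ref{prop:20} its internal category of elements $\el A$ is cofiltered in $\E$. Using the pullback description of $\tilde A$ from the proof of Theorem~\ref{thm:8} together with the infinitary extensivity of $\E$ (so that $\sum_{\U\in\beta X}\tilde A(X,\U)\cong AX$), the internal category $\el\tilde A$ has the same object-of-objects $\sum_{X\in\C}AX$ as $\el A$, while its morphisms are the $\uf_\C$-morphisms, namely spans of a coproduct-summand inclusion followed by a map of $\C$. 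Thus $\el\tilde A$ is the localisation of $\el A$ obtained by inverting the summand inclusions, mirroring the presentation $\uf_\C\cong\ue_\C[\M_\Sigma^{-1}]$. The remaining task is to show that this localisation, being at a calculus of fractions of monomorphisms, preserves the cofilteredness condition of Proposition~\ref{prop:20}; the delicate point is that this must all be carried out internally to $\E$, which I would handle by reasoning in the internal logic, where it reduces to the evident set-level argument.

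Granting the restricted equivalence, composing the three steps yields $\twocat{LCGTop}(\E,\E_0)\simeq\twocat{Lext}(\C,\E)$, natural in $\E$ and induced by $f\mapsto f^\ast\circ\eta$; this exhibits $(\E_0,\eta)$ as a left biadjoint at $\C$ to the forgetful $2$-functor, which is precisely the assertion of Proposition~\ref{prop:19}.
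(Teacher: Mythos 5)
Your overall architecture is the paper's: observe that the presheaf topos is locally connected, use Diaconescu's theorem (Proposition~\ref{prop:11}) to get $\twocat{LCGTop}(\E,\E_0)\simeq\twocat{Flat}(\uf_\C,\E)$, and then show that the equivalence of Theorem~\ref{thm:8} restricts to $\twocat{Flat}(\uf_\C,\E)\simeq\twocat{Lext}(\C,\E)$. Your argument for the direction ``$B$ flat $\Rightarrow\int\!B$ lextensive'' is sound and rather elegant: computing $\eta=\int\!y$ pointwise as the ultrahom functors $\colim_{U\in\U}\C(U,\thg)$ (filtered colimits of representables, hence left exact) and then writing a general flat $B$ as $f^\ast\circ y$ is a legitimate alternative to what the paper does.

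The genuine gap is in the converse direction, which you yourself flag as the main obstacle and then dispatch with ``reasoning in the internal logic, where it reduces to the evident set-level argument.'' Two separate claims are being waved at here, and neither is evident. First, that the internal category $\el\tilde A$ \emph{is} the localisation of $\el A$ at the summand inclusions: the morphism object of $\el\tilde A$ over $(X,\U),(Y,\V)$ is a coproduct indexed by $=_\U$-equivalence classes of partial maps, and identifying this with a quotient of spans in $\el A$ requires tracking how a generalised element of $\tilde A(X,\U)=\bigcap_{U\in\U}AU$ lifts compatibly through the various $AU$; this is not automatic. Second, that internal localisation at a right calculus of fractions preserves internal cofilteredness: the set-level argument uses choices of common denominators, and internalising it means replacing these by local surjectivity conditions in the stack semantics---doable, perhaps, but this is precisely where the content of the proposition lives, and it is absent. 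The paper avoids both issues by interposing $\ue_\C$: since $\pi\colon\ue_\C\rightarrow\C$ is a discrete opfibration, one gets an honest \emph{isomorphism} $\el F\cong\el(\mathrm{Lan}_\pi F)$ of internal categories (no localisation needed), and the passage from $\ue_\C$ to $\uf_\C$ is handled purely formally: $[{\uf_\C}^\mathrm{op},\cat{Set}]$ is the topos of sheaves on $\ue_\C$ for the topology generated by $\M_\Sigma$, so $\mathrm{Lan}_\iota$ is sheafification and preserves finite limits, whence $\mathrm{Lan}_y(B\iota)\cong(\mathrm{Lan}_yB)\circ\mathrm{Lan}_\iota$ and $\mathrm{Lan}_yB\cong\mathrm{Lan}_y(B\iota)\circ\iota^\ast$ transfer flatness in both directions with no internal reasoning at all. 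I would recommend replacing your localisation sketch with this factorisation through $\ue_\C$.
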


\begin{proof}
  Like any presheaf topos, $[{\uf_\C}^\mathrm{op}, \cat{Set}]$ is
  locally connected. For the classifying property, we must exhibit
  equivalences
  $\twocat{LCGTop}(\E, [{\uf_\C}^\mathrm{op}, \cat{Set}]) \simeq
  \twocat{Lext}(\C, \E)$, pseudonaturally in $\E$, which we will do
  by composing pseudonatural equivalences:
  \begin{equation}\label{eq:49}
    \twocat{LCGTop}(\E, [{\uf_\C}^\mathrm{op}, \cat{Set}]) \xrightarrow{\simeq}
    \twocat{Flat}(\uf_\C, \E) \xrightarrow{\simeq} \twocat{Lext}(\C, \E)\rlap{ .}
  \end{equation}
  The first of these is~\eqref{eq:53}. As for the second, we have by
  Theorem~\ref{thm:8} that
  \begin{equation}\label{eq:20}
    [\uf_\C, \E] \simeq \fc(\C, \E)
  \end{equation}
  for any locally connected Grothendieck topos $\E$, and by
  considering the explicit formula~\eqref{eq:13} for the rightward
  direction, we see that these equivalences are pseudonatural in
  inverse image functors. We will thus have 
  the desired pseudonatural equivalence if we can show that,
  in~\eqref{eq:20}, the flat functors on the left-hand side correspond
  to the finite-limit-preserving ones on the right.
  
  Towards this goal, we recall from Definition~\ref{def:7} the
  category $\ue_\C$ of which $\uf_\C$ is a localisation, and consider
  the span $\pi \colon \C \leftarrow \ue_\C \rightarrow \uf_\C \colon \iota$
  whose two legs are the forgetful functor and the localisation
  functor respectively. It is easy to see from the
  formula~\eqref{eq:13} that the left-to-right direction
  of~\eqref{eq:20} sends $B \colon \uf_\C \rightarrow \E$ to its image
  under the composite functor
  \begin{equation*}
    [\uf_\C, \E] \xrightarrow{\iota^\ast} [\ue_\C, \E]
    \xrightarrow{\mathrm{Lan}_\pi} [\C, \E]\rlap{ .}
  \end{equation*}
  It therefore suffices to prove that:
  
  (i) \textbf{$B \colon \uf_\C \rightarrow \E$ is flat if and only if
    $B \iota \colon \ue_\C \rightarrow \E$ is flat}. We saw above that
  $\uf_\C \cong \ue_\C[\M_\Sigma^{-1}]$, the localisation at the class
  of continuous coproduct coprojections. Since $\M_\Sigma$ is a
  pullback-stable, composition-closed class of arrows, there is (for
  example by~\cite[Proposition~C2.1.9]{Johnstone2002Sketches2}) a
  Grothendieck topology $J$ on $\ue_\C$ whose covering sieves are
  those which contain any map in $\M_\Sigma$. The condition for a
  functor $F \colon \ue_\C^\mathrm{op} \rightarrow \cat{Set}$ to be a
  $J$-sheaf is now precisely the condition that it inverts each
  $m \in \M_\Sigma$, and so we may identify
  $[{\uf_\C}^\mathrm{op}, \cat{Set}]$ with $\cat{Sh}(\ue_\C)$, and the
  sheafification adjunction with the adjunction
  $\mathrm{Lan}_\iota \dashv \iota^\ast \colon [{\uf_\C}^\mathrm{op},
  \cat{Set}] \rightarrow [{\ue_\C}^\mathrm{op}, \cat{Set}]$;
    so in particular, $\mathrm{Lan}_\iota$ preserves finite limits. We
    now use this to prove the claim. Note that
    $\mathrm{Lan}_y (B\iota) \cong (\mathrm{Lan}_y B) \circ
    \mathrm{Lan}_\iota$, so that if $B$ is flat, then so too is
    $B\iota$. On the other hand, since
    $\mathrm{Lan}_\iota \circ \iota^\ast \cong 1$, we have that
    $\mathrm{Lan}_y B \cong \mathrm{Lan}_y (B \iota) \circ \iota^\ast$
    so that if $B \iota$ is flat then so too is $B$.

    (ii) \textbf{$F \colon \ue_\C \rightarrow \E$ is flat if and only
      if $\mathrm{Lan}_\pi F \colon \C \rightarrow \E$ preserves
      finite limits}. Since the value of
    $\mathrm{Lan}_\pi F \colon \C \rightarrow \E$ at $X$ is
    $\sum_{\U \in \beta X} F(X, \U)$, it is an easy 
    calculation to see that the internal categories $\el F$ and
    $\el (\mathrm{Lan}_\pi F)$ are isomorphic. So $F$ is flat if and
    only if $\mathrm{Lan}_\pi F$ is flat. But since $\C$ admits all
    finite limits, $\mathrm{Lan}_\pi F$ is flat if and only if it is
    finite-limit-preserving; see, for
    example~\cite[Lemma~B3.2.5]{Johnstone2002Sketches}.
\end{proof}
By tracing the identity geometric morphism on
$[{\uf_\C}^\mathrm{op}, \cat{Set}]$ through this proof, we see
that the universal lextensive functor
$\eta \colon \C \rightarrow [{\uf_\C}^\mathrm{op}, \cat{Set}]$ is the
image under~\eqref{eq:13} of the Yoneda embedding
$\uf_\C \rightarrow [{\uf_\C}^\mathrm{op}, \cat{Set}]$, and so given
by:
\begin{equation*}
  \eta(X) = \textstyle\sum_{\U \in \beta X} y_{(X, \U)}\rlap{ .}
\end{equation*}

\subsection{The pretopos case}
\label{sec:pretopos-case}
Let us now write $\twocat{Pretop}$ for the $2$-category of
pretoposes, pretopos morphisms and all natural transformations. Like
before, every locally connected Grothendieck topos is a pretopos and
every inverse image functor is a pretopos morphism, so that we have a
forgetful $2$-functor
$\twocat{LCGTop}^\mathrm{op} \rightarrow \twocat{Pretop}$. 

\begin{Defn}
  \label{def:24}
  A \emph{locally connected classifying topos} for a pretopos $\C$ is
  a left biadjoint at $\C$ for the forgetful $2$-functor
  $\twocat{LCGTop}^\mathrm{op} \rightarrow \twocat{Pretop}$.
\end{Defn}

It is known that every small pretopos $\C$ has a locally connected
classifying topos. To see this, we factor the forgetful $2$-functor of
the preceding definition as the composite of the two forgetful $2$-functors
\begin{equation*}
  \twocat{LCGTop}^\mathrm{op} \rightarrow \twocat{GTop}^\mathrm{op}
\rightarrow \twocat{Pretop}
\end{equation*}
viewing a locally connected Grothendieck topos as a Grothendieck
topos, and a Grothendieck topos as a pretopos.
The second factor is well-known to have a left biadjoint at every
small pretopos $\C$, given by the topos of sheaves $\cat{Sh}(\C)$ for
the topology of finite jointly epimorphic families. On the other hand,
the first factor is known to have a left biadjoint given by the
\emph{locally connected coclosure} of~\cite{Funk1999The-locally}. It
follows that the composite has a left biadjoint at every small
pretopos.

One difficulty with the preceding argument is that the construction of
the locally connected coclosure in~\cite{Funk1999The-locally} is
inexplicit, relying at a crucial point on the adjoint functor theorem.
Our objective in this section is to give a concrete description of the
locally connected classifying topos of any small \emph{De Morgan}
pretopos.
The notion of De Morgan pretopos is an obvious generalisation of the
notion of \emph{De Morgan topos} described, for example in~\cite[\sec
D4.6]{Johnstone2002Sketches2}. In giving the definition, we recall a
\emph{pseudocomplement} of an element $a$ in a distributive lattice is
an element $\neg a$ which is disjoint from $a$, and is moreover the
maximal such element; i.e., such that $a \wedge b = \bot$ if and only
if $b \leqslant \neg a$.

\begin{Defn}
  \label{def:25}
  A distributive lattice $A$ is a \emph{Stone
    algebra}~\cite{Gratzer1957On-a-problem} if it admits all
  pseudocomplements and satisfies 
  $\neg a \vee \neg \neg a = \top$ for all $a \in A$. A pretopos $\C$
  is \emph{De Morgan} if each subobject lattice $\mathrm{Sub}_\C(X)$
  is a Stone algebra.
\end{Defn}

An equivalent characterisation of a De Morgan pretopos is as one in
which each inclusion of meet semi-lattices
$\mathrm{Sum}_\C(X) \rightarrow \mathrm{Sub}_\C(X)$ has a left adjoint
sending $A$ to $\neg \neg A$.
The relevance of the condition to our
investigations is isolated in the following result, whose significance
will become clear shortly. In its proof, we use the operation
$\exists_f \colon \mathrm{Sub}_\C(X) \rightarrow \mathrm{Sub}_\C(Y)$
of \emph{direct image} along a map $f \colon X \rightarrow Y$ of a
pretopos $\C$. This operation is left adjoint to pullback
$f^{-1} \colon \mathrm{Sub}_\C(Y) \rightarrow \mathrm{Sub}_\C(X)$ and
satisfies the \emph{Beck--Chevalley} and \emph{Frobenius} conditions;
see~\cite[\sec A1.3]{Johnstone2002Sketches}.

\begin{Prop}
  \label{prop:21}
  If $\C$ is a De Morgan pretopos, then $\uf_\C$ satisfies the right
  Ore condition: that is, each cospan in $\uf_\C$ as in the solid part
  of the following diagram can be completed to a commuting square as
  shown:
  \begin{equation}\label{eq:52}
    \cd{
      (Z, \W) \ar@{-->}[r]^-{[g_2]} \ar@{-->}[d]_-{[g_1]}& (X_2, \U_2)
      \ar[d]^-{[f_2]} \\
      (X_1, \U_1) \ar[r]^-{[f_1]} & (Y, \V)\rlap{ .}
    }
  \end{equation}
\end{Prop}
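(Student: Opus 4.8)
The plan is to reduce the right Ore condition to a statement about ultrafilters on a pullback in $\C$, and then to use the De Morgan hypothesis to verify a finite-intersection property via double negation.

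First I would normalise the data. Using that each summand inclusion $(U,\U|_U)\rightarrowtail(X,\U)$ with $U\in\U$ is invertible in $\uf_\C$ (as in Proposition~\ref{prop:2}), I may replace each $(X_i,\U_i)$ by an isomorphic object so that $[f_i]$ is represented by a \emph{total} continuous map $f_i\colon X_i\to Y$; continuity then reads $f_i{}_!(\U_i)=\V$. Since $\C$ is finitely complete I can form the pullback $P=X_1\times_Y X_2$ with projections $p_1,p_2$, so that $f_1p_1=f_2p_2$. I claim it suffices to produce an ultrafilter $\W$ on $\mathrm{Sum}_\C(P)$ with $p_1{}_!(\W)=\U_1$ and $p_2{}_!(\W)=\U_2$: then $(Z,\W)=(P,\W)$ together with $[g_i]=[p_i]$ completes the square, commutativity being automatic from $f_1p_1=f_2p_2$ and continuity of $p_i$ being exactly the pushforward conditions.

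Next I would build $\W$. Since pullback along $p_i$ is a Boolean homomorphism $\mathrm{Sum}_\C(X_i)\to\mathrm{Sum}_\C(P)$ (Proposition~\ref{prop:5}), the family $\{\,p_1^{-1}(A_1)\wedge p_2^{-1}(A_2):A_1\in\U_1,\,A_2\in\U_2\,\}$ is closed under finite meets, and $p_1^{-1}(A_1)\wedge p_2^{-1}(A_2)=A_1\times_Y A_2$ as a summand of $P$. If every such meet is non-initial then this family is a proper filter base and extends to an ultrafilter $\W$ on $\mathrm{Sum}_\C(P)$; since then $p_i^{-1}(A_i)\in\W$ for all $A_i\in\U_i$ we get $\U_i\subseteq p_i{}_!(\W)$, forcing equality by maximality of ultrafilters. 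So the whole proposition comes down to the claim that $A_1\times_Y A_2\neq\bot$ for all $A_1\in\U_1$ and $A_2\in\U_2$.

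This claim is where the De Morgan hypothesis enters, and is the main obstacle. Writing $I_i=\exists_{f_i}(A_i)\in\mathrm{Sub}_\C(Y)$ for the images, a regular-epi/strict-initial argument shows that $A_1\times_Y A_2$ admits a regular epimorphism onto $I_1\wedge I_2$, so $A_1\times_Y A_2=\bot$ if and only if $I_1\wedge I_2=\bot$. Suppose the latter. The $I_i$ are in general only subobjects, invisible to the ultrafilter $\V$ on summands; the De Morgan property repairs this. Since $\neg\neg$ preserves finite meets in any pseudocomplemented distributive lattice, $\neg\neg I_1\wedge\neg\neg I_2=\neg\neg(I_1\wedge I_2)=\bot$; and the Stone condition $\neg a\vee\neg\neg a=\top$ makes each $S_i\defeq\neg\neg I_i$ complemented, hence a coproduct summand, so $S_i\in\mathrm{Sum}_\C(Y)$ and $S_1\wedge S_2=\bot$ there. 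On the other hand $A_i\leq f_i^{-1}(I_i)\leq f_i^{-1}(S_i)$ exhibits the summand $f_i^{-1}(S_i)$ as lying above $A_i\in\U_i$, so $f_i^{-1}(S_i)\in\U_i$ and hence $S_i\in f_i{}_!(\U_i)=\V$. Then $S_1\wedge S_2\in\V$ contradicts $\bot\notin\V$, proving the claim. The only delicate points are the translation $A_1\times_Y A_2=\bot\Leftrightarrow I_1\wedge I_2=\bot$ and the two uses of the Stone law (to land each $\neg\neg I_i$ inside $\mathrm{Sum}_\C(Y)$); everything else is routine filter bookkeeping.
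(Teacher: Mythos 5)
Your proposal is correct and follows essentially the same route as the paper: reduce to total representatives, form the pullback, show the filter base $\{g_1^{-1}(U_1)\cap g_2^{-1}(U_2)\}$ is proper by pushing forward to $Y$, use the De Morgan/Stone condition to replace the images $\exists_{f_i}(U_i)$ by the complemented subobjects $\neg\neg\exists_{f_i}(U_i)\in\mathrm{Sum}_\C(Y)$ lying in $\V$, and extend to an ultrafilter by the Boolean prime ideal theorem. The only (immaterial) difference is that you justify $\exists(A_1\times_Y A_2)=I_1\wedge I_2$ via pullback-stability of regular epis and strictness of $\bot$, where the paper runs an explicit Frobenius--Beck--Chevalley--Frobenius computation; both are valid in any pretopos.
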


\begin{proof}
  Since every map in $\uf_\C$ factors as an isomorphism followed by
  the equivalence class of a \emph{total} map, we lose no generality
  in assuming that the $f_i$'s in~\eqref{eq:52} are total. We can
  therefore form their pullback
  $g_1 \colon X_1 \leftarrow Z \rightarrow X_2 \colon g_2$ in $\C$,
  and consider the subset
  $\F \subseteq \mathrm{Sum}_\C(Z)$ given as the upward closure of
  \begin{equation}\label{eq:40}
 \{ g_1^{-1}(U_1) \cap g_2^{-1}(U_2) : U_1 \in \U_1,
 U_2 \in \U_2 \}\rlap{ .}
 \end{equation}

 This subset is easily a filter on $\mathrm{Sum}_\C(Z)$, and we claim
 it is a \emph{proper} filter; thus, for any $U_1 \in \U_1$ and
 $U_2 \in \U_2$ we must show that
 $g_1^{-1}(U_1) \cap g_2^{-1}(U_2) \neq \bot$. Now, by Frobenius,
 Beck--Chevalley, and Frobenius we have
 \begin{align*}
   \exists_{f_1g_1}(g_1^{-1}(U_1) \cap g_2^{-1}(U_2)) & =
   \exists_{f_1}(U_1
   \cap \exists_{g_1}(g_2^{-1}(U_2))) \\
   & = \exists_{f_1}(U_1 \cap f_1^{-1}(\exists_{f_2}(U_2))) \\ &=
   \exists_{f_1}(U_1) \cap \exists_{f_2}(U_2)\rlap{ ,}
 \end{align*}
 and so, since direct image preserves and reflects $\bot$, we must
 equally show that
 $\exists_{f_1}(U_1) \cap \exists_{f_2}(U_2) \neq \bot$. If we set
 $V_i = \neg \neg \exists_{f_i}(U_i)$ then, by standard properties of
 pseudocomplementation, this is in turn equivalent to showing that
 $V_1 \cap V_2 \neq \bot$.
 Since $\C$ is De Morgan, we have $V_i \in \mathrm{Sum}_\C(Y)$;
 moreover, $U_i \in \U_i$ and
 $U_i \subseteq f_i^{-1}(\exists_{f_i}(U_i)) \subseteq f_i^{-1}(V_i)$
 implies $f_i^{-1}(V_i) \in \U_i$, and so $V_i \in \V$ since
 $(f_i)_!(\U_i) = \V$. Since $\V$ is an ultrafilter, we
 conclude that $V_1 \cap V_2 \neq \bot$ as desired.

 This proves that~\eqref{eq:40} generates a \emph{proper} filter $\F$.
 By the Boolean prime ideal theorem, we can extend this to an
 ultrafilter $\W \in \beta Z$, which by construction satisfies
 $\U_i \subseteq (g_i)_!(\W)$ for $i = 1,2$, and so
 $\U_i = (g_i)_!(\W)$ (since both sides are ultrafilters). We have
 thus completed~\eqref{eq:40} to a commuting square as
 desired.\end{proof}

The key to constructing the locally connected classifying topos of a
small De Morgan pretopos is the following standard result on geometric
morphisms into sheaf toposes proved, for example,
in~\cite[Lemma~VII.7.3]{Mac-Lane1994Sheaves}. In the statement, we
write $\twocat{CovFlat}(\A, \E)$ for the category of flat functors $\A
\rightarrow
\E$ which are also \emph{cover-preserving}, in the sense of sending
covers to jointly epimorphic families.

\begin{Prop}
  \label{prop:22}
  Let $\A$ be a small site and
  $i \colon \cat{Sh}(\A) \rightarrow [\A^\mathrm{op}, \cat{Set}]$ the
  associated inclusion of toposes. Under the
  equivalence~\eqref{eq:53}, a geometric morphism $\E \rightarrow
  [\A^\mathrm{op}, \cat{Set}]$ factors
  through $i$ just when the corresponding flat functor $\A \rightarrow
  \E$ is
  cover-preserving. Consequently,~\eqref{eq:53} restricts back to an
  equivalence
  \begin{equation}\label{eq:50}
    \twocat{GTop}(\E, \cat{Sh}(\A)) \simeq \twocat{CovFlat}(\A, \E)\rlap{ .}
  \end{equation}
\end{Prop}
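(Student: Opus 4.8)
The plan is to combine the standard theory of geometric inclusions with Diaconescu's theorem (Proposition~\ref{prop:11}). Recall that $i$ is a geometric \emph{inclusion}: its inverse image $i^\ast = a$ is sheafification and its direct image $i_\ast$ is the fully faithful embedding $\cat{Sh}(\A) \hookrightarrow [\A^\op, \cat{Set}]$, which exhibits $a$ as the left-exact localisation of $[\A^\op,\cat{Set}]$ inverting the $J$-dense monomorphisms. The first step is to record the consequence of the universal property of this localisation: a geometric morphism $f \colon \E \to [\A^\op,\cat{Set}]$ factors (necessarily uniquely up to isomorphism) as $i \circ g$ for some $g \colon \E \to \cat{Sh}(\A)$ exactly when its cocontinuous left-exact inverse image $f^\ast$ inverts every $J$-dense monomorphism. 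Since the dense monomorphisms are generated, under the closure properties of the topology, by the covering-sieve inclusions $R \rightarrowtail y c$, it then suffices to test invertibility on these.

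Next I would transport this condition across the correspondence of Proposition~\ref{prop:11}. Writing $F = f^\ast \circ y \colon \A \to \E$ for the associated flat functor, we have $f^\ast \cong \Lan_y F$, which is cocontinuous and, because $F$ is flat, also left exact; as the inverse image of a geometric morphism it therefore preserves monomorphisms, image factorisations and unions of subobjects. Now fix a $J$-covering sieve $R \rightarrowtail y c$ generated by a covering family $(c_i \to c)_i$, and use that, as a subobject of $yc$ in presheaves, $R = \bigcup_i \im(y c_i \rightarrowtail y c)$. Applying $f^\ast$ and using the properties just listed yields, as a subobject of $Fc$,
\begin{equation*}
  f^\ast(R) \;\cong\; \bigcup_i \im\bigl(F c_i \to F c\bigr) \;\rightarrowtail\; F c\rlap{ .}
\end{equation*}
Here $f^\ast(R) \to Fc$ is a monomorphism, and it is an isomorphism precisely when the union of images is all of $Fc$, i.e.\ precisely when the family $(Fc_i \to Fc)_i$ is jointly epimorphic. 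Thus $f^\ast$ inverts $R \rightarrowtail yc$ if and only if $F$ preserves this particular cover; quantifying over all covering sieves, $f^\ast$ inverts every dense monomorphism if and only if $F$ is cover-preserving. Combined with the first paragraph, this shows $f$ factors through $i$ exactly when $F \in \twocat{CovFlat}(\A, \E)$.

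The step I expect to require the most care is this second one: matching the localisation-theoretic formulation ``$f^\ast$ inverts the dense monos'' with the site-theoretic formulation ``$F$ is cover-preserving''. The two genuine subtleties are (a) justifying that it is enough to test on covering sieves rather than on all dense monomorphisms, and (b) the upgrade from the jointly-epimorphic condition to actual invertibility, which rests on left-exactness of $f^\ast$ forcing $f^\ast(R) \to Fc$ to be monic (so that surjectivity of the union of images suffices). Everything else is formal once these are in place.

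Finally I would deduce the equivalence~\eqref{eq:50}. Because $i$ is an inclusion, $i_\ast$ is fully faithful, so post-composition with $i$ gives a fully faithful functor $\twocat{GTop}(\E, \cat{Sh}(\A)) \to \twocat{GTop}(\E, [\A^\op,\cat{Set}])$ whose essential image is exactly the geometric morphisms factoring through $i$. By the preceding analysis these correspond, under the equivalence~\eqref{eq:53}, to the full subcategory $\twocat{CovFlat}(\A, \E) \subseteq \twocat{Flat}(\A, \E)$. Composing these two equivalences of categories, each fully faithful and essentially surjective onto the matching full subcategory, produces the desired equivalence $\twocat{GTop}(\E, \cat{Sh}(\A)) \simeq \twocat{CovFlat}(\A, \E)$, compatibly with the assignment $f \mapsto f^\ast \circ y$.
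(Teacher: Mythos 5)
Your argument is correct. Note that the paper offers no proof of its own for this statement: it is quoted as a standard result with a citation to Mac~Lane--Moerdijk, Lemma~VII.7.3, and the proof given there runs along exactly the lines you propose (reduce factorisation through the inclusion to inversion of dense monomorphisms, test on covering-sieve inclusions $R \rightarrowtail yc$ using cocontinuity and left-exactness of $f^\ast$, and identify $f^\ast(R)$ with the union of the images of the $Fc_i \to Fc$). The two subtleties you flag are real but standard, and you resolve them in the standard way.
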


The locally connected classifying topos of the small De Morgan
pretopos $\C$ will be obtained as a topos of sheaves on $\uf_\C$ for a
suitable Grothendieck topology, and its universal property verified
via a chain of pseudonatural equivalences
$\twocat{LCGTop}(\E, \cat{Sh}({\uf_\C})) \simeq
\twocat{CovFlat}(\uf_\C, \E) \simeq \twocat{Pretop}(\C, \E)$, each
of whose terms is a restriction of the corresponding term
in~\eqref{eq:49}.

Since a pretopos morphism out of $\C$ is a
lextensive functor which also preserves regular epimorphisms, the
topology on $\uf_\C$ must be chosen so that, under the equivalence
$\twocat{Flat}(\uf_\C, \E) \simeq \twocat{Lext}(\C, \E)$
of~\eqref{eq:49}, the cover-preserving functors to the left correspond
to the regular-epimorphism-preserving ones to the right.

We now describe such a topology, specifying it in terms of a
\emph{coverage}~\cite[Definition~C2.1.1]{Johnstone2002Sketches2}; this
involves assigning to each object $X$ a set of covering families $(f_i
\colon X_i \rightarrow X \mid i \in
I)$ satisfying the stability property:
\begin{itemize}
\item [(C)] For any cover
  $(f_i \colon X_i \rightarrow X \mid i \in I)$ and any map
  $g \colon Y \rightarrow X$ in $\A$, there is a cover
  $(h_j \colon Y_j \rightarrow Y \mid j \in J)$ such that each $gh_j$
  factors through some $f_i$.
\end{itemize}

\begin{Prop}
  \label{prop:23}
  Let $\C$ be a pretopos. There is a coverage on
  $\uf_\C$ for which a typical cover of the object $(Y, \V) \in
  \uf_\C$ is of the form 
  \begin{equation}\label{eq:22}
    \spn{f,\V} \defeq \big(\,[f] \colon (X, \U) \rightarrow (Y, \V)\, \mid\,\U \in
    \beta X, f_!(\U) = \V\,\big)
  \end{equation}
  for any $f \colon X \rightarrow Y$ whose image $\im f
  \rightarrowtail Y$ is (a coproduct injection and) in $\V$.
\end{Prop}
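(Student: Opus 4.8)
The plan is to take the families $\spn{f,\V}$ described in~\eqref{eq:22} as the covering families and to verify only the stability property~(C); nothing more is required of a coverage in the sense of~\cite[Definition~C2.1.1]{Johnstone2002Sketches2}. First I would record that each such family genuinely consists of morphisms of $\uf_\C$: for $\U \in \beta X$ the condition $f_!(\U) = \V$ unwinds, via the pushforward formula~\eqref{eq:9}, to exactly the continuity of $f$, so each $[f]\colon(X,\U) \to (Y,\V)$ appearing in~\eqref{eq:22} is well defined as a (total) map of $\uf_\C$.

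To check~(C), suppose given a cover $\spn{f,\V}$ of $(Y,\V)$, with $f\colon X \to Y$ having $\im f \rightarrowtail Y$ a coproduct injection lying in $\V$, together with a map $[g]\colon (Y',\V') \to (Y,\V)$ of $\uf_\C$. Since every map of $\uf_\C$ factors as an isomorphism followed by the class of a total map (as in the proof of Proposition~\ref{prop:21}), and since the families~\eqref{eq:22} are evidently stable under precomposition with isomorphisms, we may assume $g\colon Y' \to Y$ is total with $g_!(\V') = \V$. I would then form the pullback $p\colon Y' \leftarrow Z \rightarrow X\colon q$ of $f$ along $g$ in the pretopos $\C$ and claim that $\spn{p,\V'}$ is a cover of $(Y',\V')$.

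The substance of the claim is that $\im p \rightarrowtail Y'$ is a coproduct injection lying in $\V'$. Factoring $f$ as $X \twoheadrightarrow \im f \rightarrowtail Y$ and pulling back along $g$, the image factorization is preserved because regular epimorphisms in a pretopos are stable under pullback; hence $p$ factors as $Z \twoheadrightarrow g^{-1}(\im f) \rightarrowtail Y'$, so that $\im p = g^{-1}(\im f)$. By Proposition~\ref{prop:5}, the homomorphism $g^{-1}$ carries the coproduct summand $\im f$ of $Y$ to a coproduct summand of $Y'$; and by continuity of $g$, the fact that $\im f \in \V$ forces $g^{-1}(\im f) \in \V'$. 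This establishes the claim.

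It then remains to see that each composite $[g] \circ [p]$ factors through some member of $\spn{f,\V}$. For $\W \in \beta Z$ with $p_!(\W) = \V'$, set $\U \defeq q_!(\W) \in \beta X$, so that $[q]\colon (Z,\W) \to (X,\U)$ is a map of $\uf_\C$; since $gp = fq$ by the pullback square, we compute $f_!(\U) = f_! q_!(\W) = g_! p_!(\W) = g_!(\V') = \V$, whence $[f]\colon(X,\U)\to(Y,\V)$ belongs to $\spn{f,\V}$, and $[g]\circ[p] = [fq] = [f]\circ[q]$ furnishes the required factorization. This verifies~(C). I expect the main obstacle to be the claim that $\im p$ is a coproduct summand of $Y'$ lying in $\V'$: this is exactly where the extensive and De~Morgan-free part of the hypothesis bites, combining the pullback-stability of coproduct injections from Proposition~\ref{prop:5} with that of regular epimorphisms in a pretopos. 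The remaining bookkeeping, and the reduction to total $g$, are routine.
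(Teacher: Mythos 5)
Your proposal is correct and follows essentially the same route as the paper's proof: verify the stability axiom (C) by pulling back $f$ along (a total representative of) $g$, use pullback-stability of image factorisations together with continuity of $g$ to see that the image of the pulled-back map is a coproduct summand lying in $\V'$, and then push forward ultrafilters along the pullback legs to obtain the required factorisations. The only cosmetic difference is that the paper keeps $g$ partial and absorbs the inclusion $m \colon V' \rightarrowtail Y'$ into the new covering map $f' = mq$, whereas you first reduce to total $g$ via the isomorphism factorisation; these amount to the same covering family of $(Y',\V')$.
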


\begin{proof}
  We must verify condition (C). So consider $\spn{f,
    \V}$ as above and a map $[g] \colon (Y', \V') \rightarrow (Y,
  \V)$ in $\uf_\C$ defined on some $m \colon V' \rightarrowtail
  Y'$ in $\V'$. We first pull back $f$ along $g$ in
  $\C$ as in the left-hand square below, and now define $f' = mq \colon X' \rightarrow
  Y'$. By assumption, $\im f \in \V$; since
  $g$ is continuous and image factorisations are pullback-stable, it
  follows that $\im f' \in \V'$. Moreover, for each $[f'] \colon (X',
  \U') \rightarrow (Y', \V')$ in $\spn{f',
    \V'}$, the composite $[gf']$ factorises through a map in $\spn{f,
    \V}$ as to the right in:
\begin{equation*}
  \cd{
    {X'} \pullbackcorner \ar[r]^-{p} \ar[d]_{q} &
    {X} \ar[d]^{f} & &
    {(X', \U')} \ar@{-->}[r]^-{[p]} \ar[d]_{[f']} &
    {(X, p_! \U')} \ar[d]^{[f]} \\
    {V'} \ar[r]^-{g} &
    {Y} & &
    {(Y', \V')} \ar[r]^-{[g]} &
    {(Y, \V)}\rlap{ .}
  }  
\end{equation*}
This proves that the covers do indeed satisfy condition (C).
\end{proof}

We write $\cat{Sh}(\uf_\C)$ for the
topos of sheaves on $\uf_\C$ for this coverage.
\begin{Thm}
  \label{thm:12}
  Let $\C$ be a small De Morgan pretopos. The topos
  $\cat{Sh}({\uf_\C})$  is a locally connected classifying topos for $\C$, and is
  itself De Morgan.
\end{Thm}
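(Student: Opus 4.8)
The plan is to establish, pseudonaturally in a locally connected Grothendieck topos $\E$, the chain of equivalences $\twocat{LCGTop}(\E, \cat{Sh}(\uf_\C)) \simeq \twocat{CovFlat}(\uf_\C, \E) \simeq \twocat{Pretop}(\C, \E)$ anticipated above, and then to read off the De~Morgan property from Proposition~\ref{prop:21}. Before the chain can even be stated I must know that $\cat{Sh}(\uf_\C)$ is itself locally connected. This I would deduce from the fact that the coverage of Proposition~\ref{prop:23} is a \emph{locally connected} coverage: each cover $\spn{f, \V}$ is inhabited (some $\U$ has $f_!(\U) = \V$, since $\im f \in \V$), and the sieve it generates is connected---for any two of its generators $(X, \U_1), (X, \U_2) \to (Y, \V)$ the right Ore condition of Proposition~\ref{prop:21} completes the resulting cospan over $(Y, \V)$ to a commuting square, linking them by a span, while every member of the sieve factors through a generator. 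Coverages all of whose covers are inhabited and connected present locally connected toposes, with $\pi_0$ computed by connected components. Granting local connectedness, the first equivalence in the chain is the restriction to $\twocat{LCGTop}$ of Proposition~\ref{prop:22} applied to the coverage of Proposition~\ref{prop:23}, while the second is the restriction of the second equivalence of Proposition~\ref{prop:19}, i.e.\ of $B \mapsto \int\!B$ from~\eqref{eq:13}, to the full subcategories of cover-preserving functors and of regular-epimorphism-preserving ones on either side. Composing yields the universal property of Definition~\ref{def:24}.

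The crux is therefore to match, under $B \mapsto \int\!B =: A$, cover-preservation of $B$ with preservation of regular epimorphisms by $A$. First I would reduce the covers: given $\spn{f, \V}$, the summand inclusion $\im f \rightarrowtail Y$ is inverted in $\uf_\C$, giving $(\im f, \res \V {\im f}) \cong (Y, \V)$, and $f = (\im f \rightarrowtail Y) \circ e$ with $e \colon X \twoheadrightarrow \im f$ a regular epimorphism of $\C$; so $B$ preserves $\spn{f, \V}$ iff it preserves $\spn{e, \res \V {\im f}}$, and it suffices to treat $\spn{e, \W}$ for $e$ a regular epimorphism of $\C$ and $\W \in \beta Y$ arbitrary. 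Now $A Y = \sum_{\W \in \beta Y} B(Y, \W)$ by~\eqref{eq:13}; regrouping the summands of $A X$ along the fibres of $\beta e \colon \beta X \to \beta Y$ and using extensivity of $\E$, the map $A e$ decomposes as a coproduct over $\W$ of the maps $\sum_{e_!(\U) = \W} B(X, \U) \to B(Y, \W)$, whose components are exactly the images under $B$ of the family $\spn{e, \W}$. By extensivity $A e$ is epic iff every such summand map is epic, i.e.\ iff $B$ sends each $\spn{e, \W}$ to a jointly epimorphic family; since in the topos $\E$ epimorphisms are regular and the regular epimorphisms of $\C$ are its covers, this is precisely preservation of regular epimorphisms by $A$. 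This bookkeeping---aligning the $\beta Y$-indexed decomposition of $A e$ with the column-by-column cover condition---is the main technical step of the classifying part.

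For the De~Morgan claim I would first reduce to a generating family. If $q \colon P \twoheadrightarrow Z$ is epic in a topos then $q^{-1} \colon \mathrm{Sub}(Z) \to \mathrm{Sub}(P)$ is an injective Heyting homomorphism (it has adjoints on both sides, and Frobenius gives injectivity since $\exists_q q^{-1} = (\thg) \wedge \im q = (\thg)$), hence reflects the Stone identity $\neg a \vee \neg\neg a = \top$; and by infinitary extensivity $\mathrm{Sub}(\sum_i G_i) \cong \prod_i \mathrm{Sub}(G_i)$, a product of Stone algebras being Stone. Thus a Grothendieck topos is De~Morgan as soon as $\mathrm{Sub}(G)$ is a Stone algebra for every $G$ in a generating family. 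Taking the sheafified representables $a\, y_{(X, \U)}$ (a generating family), the claim reduces to showing that the Heyting algebra of $J$-closed sieves on each $(X, \U) \in \uf_\C$ is a Stone algebra. Here Proposition~\ref{prop:21} re-enters: the right Ore condition is exactly what forces the lattice of \emph{all} sieves on an object to satisfy De~Morgan's law---the one-categorical-level-up analogue of the classical fact that $[\mathcal{D}^\mathrm{op}, \cat{Set}]$ is De~Morgan precisely when $\mathcal{D}$ is right Ore---and the remaining work is to check that this survives the passage to $J$-closed sieves for the coverage of Proposition~\ref{prop:23}. I expect this last point, the compatibility of the Ore-driven De~Morgan law with the closure operator of the coverage, to be the genuinely delicate part of the whole proof; the classifying property, by contrast, is essentially forced once the cover/regular-epimorphism dictionary of the previous paragraph is in hand.
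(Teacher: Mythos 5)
Your treatment of the classifying property is essentially the paper's own argument: reduce each cover $\spn{f,\V}$ to $\spn{e, \res{\V}{\im f}}$ via the image factorisation and the invertibility of $[\im f \rightarrowtail Y]$ in $\uf_\C$, then match cover-preservation of $B$ with regular-epi-preservation of $\int\!B$ by decomposing $(\int\!B)(e)$ over $\beta Y$ using infinitary extensivity of $\E$. That part is fine. But there is a real elision earlier: the inhabitedness of the covers $\spn{f,\V}$ is not immediate ``since $\im f \in \V$''. In a general pretopos $f^{-1} \colon \mathrm{Sum}_\C(Y) \rightarrow \mathrm{Sum}_\C(X)$ is just a Boolean algebra homomorphism, and producing $\U \in \beta X$ with $f_!(\U) = \V$ requires an argument: one must check that the filter generated by $\{f^{-1}(V) : V \in \V\}$ is \emph{proper} --- which uses Frobenius, $\exists_f(f^{-1}(V)) = V \cap \im f$, together with $\im f \in \V$ --- and then invoke the Boolean prime ideal theorem, exactly as in the proof of Proposition~\ref{prop:21}. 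This is the one genuinely non-formal input to local connectedness, and it cannot be waved through in a parenthesis.

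The more serious gap is the De Morgan claim, where you explicitly stop short: you reduce to showing that the lattice of $J$-closed sieves on each $(X,\U)$ is a Stone algebra, attribute the De Morgan law for \emph{all} sieves to the right Ore condition, and then declare the compatibility with the closure operator of the coverage to be ``the genuinely delicate part'' without carrying it out. As it stands this is a proof of nothing: the passage from the presheaf-level De Morgan law to the sheaf subtopos is precisely where the hypothesis of nonempty covers is needed, and you have not verified it. The paper closes both halves in one stroke by citing the standard results that a sheaf topos on a site satisfying the right Ore condition, all of whose covering sieves are nonempty, is locally connected (\cite[Examples C3.3.11(a)]{Johnstone2002Sketches2}) and De Morgan (\cite[Corollary~2.8]{Caramello2009De-Morgan}); with those in hand, the only thing left to prove is the inhabitedness of covers discussed above. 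So either quote these results, or actually complete the closed-sieve computation you sketch --- but do not leave it as an expectation.
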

\begin{proof}
  We begin by showing that $\cat{Sh}(\uf_\C)$ is locally connected and
  De Morgan. Since $\C$ is De Morgan, we know by
  Proposition~\ref{prop:21} that $\uf_\C$ satisfies the right Ore
  condition, and so by~\cite[Examples
  C3.3.11(a)]{Johnstone2002Sketches2}
  and~\cite[Corollary~2.8]{Caramello2009De-Morgan}, the sheaf topos
  $\cat{Sh}(\uf_\C)$ will be both locally connected and De Morgan so
  long as every covering family $\spn{f, \V}$ as in~\eqref{eq:22} is
  non-empty. Thus, given $\V \in \beta Y$ and
  $f \colon X \rightarrow Y$ in $\C$ with $\im f \in \V$, we must show
  that there exists an ultrafilter $\U \in \beta X$ with
  $f_!(\U) = \V$. Much as in Proposition~\ref{prop:21}, we consider
  the subset $\F \subseteq \mathrm{Sum}_\C(X)$ given as the
  upwards-closure of
  \begin{equation}\label{eq:55}
    \{f^{-1}(V) : V \in \V\}\rlap{ .}
  \end{equation}
  Like there, $\F$ is a filter which we claim is moreover
  \emph{proper}. Indeed, if $\bot = f^{-1}(V)$ for some
  $V \in \mathrm{Sum}_\C(Y)$, then also
  $\bot = \exists_f(f^{-1}(V) \cap \top) = V \cap \im f$ by Frobenius;
  whence $V \notin \V$ since $\im f \in \V$. Like before, we can now
  use the Boolean prime ideal theorem to find an ultrafilter
  $\U \subseteq \mathrm{Sum}_\C(X)$ extending $\F$ which, by
  construction, will satisfy $\V \subseteq f_!(\U)$ and hence (since
  both are ultrafilters) $\V = f_!(\U)$.

  So $\cat{Sh}(\uf_\C)$ is locally connected and De Morgan; it remains
  to verify the classifying property, for which we must exhibit
  equivalences
  $\twocat{LCGTop}(\E, \cat{Sh}({\uf_\C}), \cat{Set}]) \simeq
  \twocat{Pretop}(\C, \E)$, pseudonaturally in $\E$. As discussed
  above, these will be obtained by composing pseudonatural
  equivalences:
  \begin{equation}\label{eq:51}
    \twocat{LCGTop}(\E, \cat{Sh}({\uf_\C})) \xrightarrow{\simeq}
    \twocat{CovFlat}(\uf_\C, \E) \xrightarrow{\simeq} \twocat{Pretop}(\C, \E)
  \end{equation}
  of which the first is~\eqref{eq:50}, and the second is obtained by
  restricting the right-hand equivalence
  $\twocat{Flat}(\uf_\C, \E) \simeq \twocat{Lext}(\C, \E)$
  of~\eqref{eq:49}. The only point to check is that the
  cover-preserving functors to the left of this latter equivalence
  correspond to the regular-epimorphism-preserving ones to the right.
  
  So suppose given a covering family $\spn{f,\V}$ as in~\eqref{eq:22}.
  We may form the image factorisation
  $f = me \colon X \twoheadrightarrow \im f \rightarrowtail Y$, and
  since by assumption $\im f \in \V$, we conclude that
  \begin{equation*}
    \spn{f, \V} = \big(\,(X, \U) \xrightarrow{[e]} (\im f, \res \V {\im f}) \xrightarrow{[m]}
    (Y, \V)\,\big)_{\U \in \beta X, e_!(\U) = \res \V {\im f}}\rlap{ .}
  \end{equation*}
  Since $[m]$ is invertible in $\uf_\C$, this family will be sent to a
  jointly epimorphic one just when $\spn{e, \res \V {\im f}}$ is;
  whence a functor $A \colon \uf_\C \rightarrow \E$ preserves all
  covers just when it preserves ones $\spn{f, \V}$ as in~\eqref{eq:22}
  with $f$ a \emph{regular epimorphism}. This is equally to say that,
  for each $f \colon X \twoheadrightarrow Y$ and each $\V \in \beta Y$,
  the map to the left in:
  \begin{equation*}
    \sum_{\substack{\U \in \beta X\\f_!(\U) = \V}} A(X,\U) \rightarrow
    A(Y, \V) \qquad\qquad \sum_{\U \in \beta X} A(X,\U) \rightarrow \sum_{\V
      \in \beta X} A(Y, \V)
  \end{equation*}
  obtained by copairing the maps
  $A([f]) \colon A(X,\U) \rightarrow A(Y,\V)$ is an epimorphism in
  $\E$. Summing these left-hand maps over all $\V \in \beta Y$ and
  using infinite extensivity of $\E$, this is equally the condition
  that, for each $f \colon X \twoheadrightarrow Y$ in $\C$, the map
  right above is an epimorphism. Since this map is the value at $f$ of
  the functor $\int\!A \colon \C \rightarrow \E$ corresponding to $A$
  under~\eqref{eq:13}, this completes the proof.
\end{proof}

\begin{Rk}
  \label{rk:10}
  As before, chasing the identity geometric morphism
  $\cat{Sh}(\uf_\C) \rightarrow \cat{Sh}(\uf_\C)$ through this proof
  shows that the universal pretopos map
  ${\eta \colon \C \rightarrow \cat{Sh}({\uf_\C})}$ is the image
  under~\eqref{eq:13} of the composite
  $ay \colon \uf_\C \rightarrow \cat{Sh}({\uf_\C})$ of the Yoneda
  embedding and the sheafification functor. As such it is given by:
  \begin{equation*}
    \eta(X) = \textstyle\sum_{\U \in \beta X} ay_{(X, \U)}\rlap{ .}
  \end{equation*}
  Now, each object $ay_{(X, \U)}$ is \emph{connected} in
  $\cat{Sh}(\uf_\C)$, in the sense of having no non-trivial coproduct
  decomposition; and so it follows that
  $\cat{Sum}_{\uf_\C}(\eta X) = \P(\beta X)$ is the canonical
  extension of the Boolean algebra $\cat{Sum}_\C(X)$. This justifies
  the link drawn in the introduction to this section between locally
  connected classifying toposes, and canonical extension for Boolean
  algebras.
\end{Rk}

\begin{Rk}
  \label{rk:7}
  We remarked above that the $2$-functor
  $\twocat{GTop}^\mathrm{op} \rightarrow \twocat{Pretop}$ has a
  left biadjoint at every small pretopos given by the topos
  $\cat{Sh}(\C)$ of sheaves on $\C$ for the topology of finite jointly
  epimorphic families. The toposes arising in this way are commonly
  known as \emph{coherent} toposes; moreover,
  by~\cite[Theorem~3.11]{Caramello2009De-Morgan}, the coherent topos
  associated to a small De Morgan pretopos is itself De Morgan. Given
  this, another way of seeing Theorem~\ref{thm:12} is as giving an
  explicit construction of the \emph{locally connected
    coclosure}~\cite{Funk1999The-locally} of any coherent De Morgan
  topos.

  One may reasonably ask if we have a similar explicit construction
  upon dropping the qualifier ``coherent''. The answer is yes, so long
  as we assume that every cardinal is smaller than some strongly
  compact cardinal. In this case, for any De Morgan Grothendieck topos
  $\E$, we can find a strongly compact cardinal $\kappa$ such that
  $\E$ is the free completion of a small De Morgan \emph{$\kappa$-ary
    pretopos}---that is, a pretopos with pullback-stable $\kappa$-small coproducts. We
  can thus reduce the problem to constructing the locally connected
  classifying topos of a small $\kappa$-ary De Morgan pretopos; and we
  can do this by tracing through the definitions and results of this
  paper replacing everywhere finite coproducts by $\kappa$-small
  coproducts. The main change, as in~\cite{Borger1980A-Characterization}, is
  that we must replace ultrafilters by \emph{$\kappa$-complete
    ultrafilters}---ones closed under $\kappa$-small intersections.
  The assumption of strong compactness of $\kappa$ is needed in
  the proofs of Proposition~\ref{prop:21} and Theorem~\ref{thm:12},
  where we are now required to extend the $\kappa$-complete
  filters~\eqref{eq:40} and~\eqref{eq:55} to $\kappa$-complete
  ultrafilters.
\end{Rk}

\subsection{Relation to toposes of types}
\label{sec:relation-topos-types}

In the introduction to this section, we have already discussed the
analogies between our construction of a locally connected classifying
topos, and Makkai's topos of types from~\cite{Makkai1981The-topos}. We
now provide a more detailed technical comparison between these
constructions and other similar constructions in the literature.

The earliest ``topos of types'' in fact predates Makkai, appearing in
Joyal and Reyes'~\cite{Joyal1978Forcing}. Given a pretopos $\C$, a
\emph{prime filter} on $X \in \C$ is a prime filter in the
distributive lattice $\mathrm{Sub}_\C(X)$; these comprise the objects
of a category $\P\F_\C$ of prime filters in $\C$ defined similarly to
$\uf_\C$. Endowing $\P\F_\C$ with the obvious analogy of the topology
of Proposition~\ref{prop:23} yields~\cite{Joyal1978Forcing}'s
\emph{topos of existential types}. No universal property is described,
but the formula~\eqref{eq:17} appears on p.11 of~\emph{ibid}.

Makkai's topos of types from~\cite{Makkai1981The-topos} is a topos of
sheaves on the same category $\P\F_\C$, but for a different topology,
and he exhibits it as the ``classifying prime-generated topos for
$\mathfrak p$-models of $\C$''. As we have already said, a
Grothendieck topos $\E$ is prime-generated if each subobject lattice
is completely distributive algebraic, while a pretopos morphism
$F \colon \C \rightarrow \E$ into a prime-generated topos is said to
be a \emph{$\mathfrak p$-model} if for every prime filter
$\mathfrak p$ on $\cat{Sub}_\C(X)$ and every
$f \colon X \rightarrow Y$ we have
$\exists_f(\bigcap_{A \in \mathfrak p} FA) = \bigcap_{A \in \mathfrak
  p} F(\exists_f A)$ in $\mathrm{Sub}_\E(FY)$. The classifying
property of the topos of types $\tau(\C)$ is given by equivalences
\begin{equation}\label{eq:21}
  \twocat{PGTop}(\E, \tau(\C)) \simeq
  \twocat{pPretop}(\C, \E)
\end{equation}
where to the left we have the category of geometric morphisms between
prime-generated toposes whose inverse image functors preserve all
intersections, and to the right we have the category of
$\mathfrak p$-models. In establishing this equivalence, the
formula~\eqref{eq:17} again appears; see the bottom of p.164
of~\emph{ibid}. In model-theoretic terms, the condition of being a
$\mathfrak{p}$-model is a \emph{saturation} condition; Makkai states
this already in~\cite{Makkai1981The-topos}, and the point is followed
up in~\cite{Butz2004Saturated}, and exploited
in, among other places,~\cite{Moerdijk1995A-model, Eliasson2003Ultrasheaves}.

The other main ``topos of types'' in the literature is Pitts'
\emph{topos of filters} $\Phi(\C)$ of a pretopos $\C$. Introduced
in~\cite{Pitts1983An-application}, this is the topos of sheaves on the
category $\F_\C$ of \emph{all}---not necessarily prime---filters of
subobjects, for the topology whose covers are the finite jointly
epimorphic families. The universal property of $\Phi(\C)$ was given
in~\cite{Magnan2000Le-topos} by analogy with $\tau(\C)$: it is the
``classifying completely distributive topos for $\mathfrak{f}$-models
of $\C$''. Here, a completely distributive topos is one whose
subobject lattices are completely distributive, and an
$\mathfrak{f}$-model is like a $\mathfrak{p}$-model, but with
arbitrary filters replacing prime ones.

We conclude this discussion by comparing the universal
characterisation~\eqref{eq:21} of Makkai's topos of types and our
Theorem~\ref{thm:12}. To the left of the equivalence, our theorem
replaces ``prime-generated'' by ``locally connected'' and moreover
relaxes the condition of intersection-preservation on morphisms. What
permits this relaxation is the fact that we only care about
intersections of \emph{coproduct summands}, and any inverse image
functor between locally connected toposes preserves these. To the
right of the equivalence, we drop the $\mathfrak p$-model condition.
This is to do with the fact that our choice of topology is analogous
to Joyal and Reyes'~\cite{Joyal1978Forcing} rather than
Makkai's~\cite{Makkai1981The-topos}. If one modifies Makkai's topos of
types to use Joyal and Reyes' topology, then one can also drop the
$\mathfrak p$-model condition; however, the result is then no longer a
prime-generated topos, and so it is unclear what an appropriate
universal property would be. The final difference we note is that
Makkai's equivalence works for arbitrary pretoposes $\C$, while ours
works only for \emph{De Morgan} pretoposes; this extra condition seems
to be necessary to ensure that the topos of sheaves we form is indeed
locally connected.

Asides from these technical distinctions, we would raise one further
point. In this paper, we have striven to make the constructions we
give as unavoidable as possible. The category $\uf_\C$ is forced upon
us once we are interested in finite-coproduct-preserving functors out
of $\C$; adding finite-limit-preservation leads us to consider also
flatness; and finally, once we add regular-epimorphism-preservation,
we are led inevitably to the given topology on $\uf_\C$. Everything
else is a matter of making the details match up\footnote{Though at
  this stage we have no satisfactory explanation for the requirement
  of De Morganness.}. In future work, we intend to see whether our
main results can be adapted to the prime filter setting, and if, on
doing so, they provide a treatment of Makkai's topos of types in the
same spirit.

\end{document}